\tikzstyle{VertexStyle} = [shape = circle, draw, fill]
\tikzset{pre/.style={-}}    
\tikzstyle{every node}=[circle, inner sep=0pt, minimum width=4pt]
\DeclareMathOperator{\Cr}{Cr}
\DeclareMathOperator{\si}{si}
\DeclareMathOperator{\co}{co}
\newcommand{\dY}{$\Delta$\nobreakdash-$Y$}
\newcommand{\Yd}{$Y$\nobreakdash-$\Delta$}
\newcommand{\GF}{\mathrm{GF}}
\newcommand{\ba}{\backslash}
\newcommand{\AG}{\mathit{AG}}
\newcommand{\BBR}{\mathit{BB}}
\newcommand{\FF}{\mathit{FF}}
\newcommand{\FN}{\mathit{FN}}
\newcommand{\Fq}{\mathit{Fq}}
\newcommand{\Fr}{\mathit{Fr}}
\newcommand{\Fs}{\mathit{Fs}}
\newcommand{\Ft}{\mathit{Ft}}
\newcommand{\Fu}{\mathit{Fu}}
\newcommand{\Fv}{\mathit{Fv}}
\newcommand{\FX}{\mathit{FX}}
\newcommand{\GP}{\mathit{GP}}
\newcommand{\KP}{\mathit{KP}}
\newcommand{\KR}{\mathit{KR}}
\newcommand{\LP}{\mathit{LP}}
\newcommand{\PP}{\mathit{PP}}
\newcommand{\SP}{\mathit{Sp}}
\newcommand{\TQ}{\mathit{TQ}}
\newcommand{\UG}{\mathit{UG}}
\newcommand{\WQ}{\mathit{WQ}}
\newcommand{\pappus}{\mathsf{Pappus}}
\newcommand{\spikey}{spiky $3$-separator}
\newcommand{\tcn}{nest}
\newcommand{\tcns}{nests}
\newcommand{\quadflower}{quad-flower}
\newtheorem{theorem}{Theorem}[section]
\newtheorem{lemma}[theorem]{Lemma}
\newtheorem{proposition}[theorem]{Proposition}
\newtheorem{corollary}[theorem]{Corollary}
\theoremstyle{definition}
\newtheorem{definition}[theorem]{Definition}
\begin{document}

\title[Excluded minors for $\GF(5)$-representable matroids on $10$ elements]{The excluded minors for $\GF(5)$-representable matroids on ten elements}
\author{Nick Brettell} 
\address{School of Mathematics and Statistics\\
  Victoria University of Wellington\\
  New Zealand}
\email{nick.brettell@vuw.ac.nz}
\thanks{Supported by the ERC consolidator grant 617951, and the New Zealand Marsden Fund.}

\maketitle

\begin{abstract}
  Mayhew and Royle (2008) showed that there are $564$ excluded minors for the class of $\GF(5)$-representable matroids having at most $9$ elements.
  We enumerate the excluded minors for $\GF(5)$-representable matroids having $10$ elements: there are precisely 2128 such excluded minors.
  In the process we find, for each $i \in \{2,3,4\}$, the excluded minors for the class of $\mathbb{H}_i$-representable matroids having at most $10$ elements,
  and the excluded minors for the class of $\mathbb{H}_5$-representable matroids having at most $13$ elements.
\end{abstract}

\section{Introduction}

The excluded-minor characterisations for binary, ternary, and quaternary matroids are seminal results in matroid theory: there are 1, 4, and 7 excluded minors for these classes, respectively \cite{Tutte1958,Seymour1979,Bixby1979,GGK2000}.
For the class of matroids representable over $\GF(5)$, the excluded-minor characterisation is not known.
However, in 2008 Mayhew and Royle enumerated all matroids on at most nine elements, and thereby found that there are precisely 564 excluded minors on at most nine elements \cite{MR2008}.

These days, one can easily enumerate all 383172 matroids on at most nine elements on their home computer should they wish\footnote{For example, see the MatroidUnion blog post here: http://matroidunion.org/?p=301.}, but enumerating all $10$-element matroids still appears well out of reach (Mayhew and Royle estimate there are at least $2.5 \times 10^{12}$ sparse paving matroids with rank 5 and $10$ elements \cite{MR2008}).
However, here we enumerate all those that are excluded minors for the class of $\GF(5)$-representable matroids.

\begin{theorem}
  \label{thm1}
  There are $2128$ excluded minors for $\GF(5)$-representability on $10$ elements.
\end{theorem}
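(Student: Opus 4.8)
The plan is to turn an apparently hopeless enumeration of all $10$-element matroids into a feasible search, by exploiting how tightly the excluded-minor condition constrains structure. The guiding principle is that an excluded minor is non-representable but ``locally representable'': each of its proper minors lies in the class, and we already possess the complete list of $\GF(5)$-representable matroids on $9$ elements from Mayhew and Royle.

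First I would reduce to the $3$-connected case. The class of $\GF(5)$-representable matroids is closed under direct sums and $2$-sums, so if an excluded minor $M$ decomposed as a direct sum or a $2$-sum then one of the parts would be a non-representable proper minor, contradicting minor-minimality. Hence every excluded minor is $3$-connected. Next I would set up an inductive generation. By Tutte's wheels-and-whirls theorem, every $3$-connected matroid on $10$ elements that is not a wheel or a whirl has an element whose deletion or contraction is $3$-connected; the rank-$5$ wheel and whirl are themselves $\GF(5)$-representable and so are not excluded minors. Thus any excluded minor $M$ on $10$ elements is a $3$-connected single-element extension or coextension of some $3$-connected $9$-element minor $N$, and since $N$ is a proper minor of $M$ it is $\GF(5)$-representable.

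This yields a concrete procedure: for each $3$-connected $\GF(5)$-representable $9$-element matroid $N$ from the known list, generate every $3$-connected single-element extension and coextension $M$; then $M$ is an excluded minor precisely when $M$ itself is not $\GF(5)$-representable while each of its single-element deletions and contractions appears in the $9$-element list. The second condition is a database lookup; the first requires a genuine representability test. Finally the surviving candidates are deduplicated up to isomorphism and counted, and as consistency checks one verifies that the resulting set is closed under duality (since $\GF(5)$-representability is) and that none of the new matroids contains one of the $564$ known smaller excluded minors as a minor.

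The hard part will be making the $\GF(5)$-representability test and the extension enumeration efficient and, above all, certifiably complete: a $3$-connected matroid can admit many inequivalent $\GF(5)$-representations, so both deciding representability and enumerating all extensions of a representable matroid are delicate. To control this I would use the hierarchy of Hydra partial fields $\mathbb{H}_1, \dots, \mathbb{H}_5$, which stratify the $\GF(5)$-representable matroids according to the structure of their representations, with $\mathbb{H}_5$-representability corresponding to essentially unique representability. Computing the excluded minors for $\mathbb{H}_i$-representability on at most $10$ (and, for $\mathbb{H}_5$, at most $13$) elements supplies the scaffolding needed to propagate representability certificates up the chain and to keep the extension computations tractable. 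Since the final claim rests on a large machine computation, the principal concern is guarding against error, which I would address through the duality and small-minor consistency checks above and, ideally, an independent reimplementation of the count.
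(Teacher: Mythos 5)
Your skeleton---reduce to the $3$-connected case, realise each $10$-element excluded minor as a $3$-connected single-element extension or coextension of a $3$-connected $9$-element matroid all of whose proper minors are $\GF(5)$-representable, generate candidates from a catalogue, and filter---has the same shape as the paper's argument, and your individual reductions (closure of the class under direct sums and $2$-sums, wheels-and-whirls for the existence of a $3$-connected $9$-element deletion or contraction, the observation that it suffices to check the single-element deletions and contractions) are all sound. The gap lies in the step you defer to ``scaffolding'': your concrete procedure takes as its catalogue \emph{all} $3$-connected $\GF(5)$-representable $9$-element matroids and generates \emph{all} of their $3$-connected single-element extensions and coextensions. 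That is not what the paper does, and it is precisely where the computation becomes unmanageable: the aggregate of all abstract single-element extensions (modular cuts) of that catalogue is a substantial slice of all $3$-connected $10$-element matroids, whose enumeration the paper explicitly describes as out of reach. The Hydra hierarchy is not an optimisation bolted onto this procedure after the fact; it is what replaces it.

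Concretely, the mechanism your proposal never states is the following recursion. For each $i$, let $\mathcal{N}_i$ be the set of excluded minors for $\mathbb{H}_{i+1}$-representability that are $\mathbb{H}_i$-representable. Then every excluded minor for $\mathbb{H}_i$-representability either has an $\mathcal{N}_i$-minor or is itself an excluded minor for $\mathbb{H}_{i+1}$-representability, and so was already found at the previous level of the hierarchy; and, crucially, the members of $\mathcal{N}_i$ are strong $\mathbb{H}_i$-stabilizers (\cref{hydrastabs}). The stabilizer property is what makes the restricted catalogue generable at all: a $3$-connected $\mathbb{H}_i$-representable matroid with an $\mathcal{N}_i$-minor carries an essentially unique representation extending a fixed representation of the seed, so the class can be built over a finite-field proxy without ever confronting the inequivalent-representation problem that you yourself flag as the hard part. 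It also shrinks the base catalogue from ``all $\GF(5)$-representable $9$-element matroids'' to the $4540$ matroids recorded in \cref{gf5table}, and the splitter theorem relative to a non-near-regular seed (\cref{nearregconn})---rather than wheels-and-whirls alone---is what guarantees that every excluded minor with an $\mathcal{N}_1$-minor has a single-element deletion, of itself or of its dual, that stays \emph{inside} this restricted catalogue. Without the dichotomy lemma and the stabilizer lemma, your plan to ``propagate representability certificates up the chain'' is a promissory note covering exactly the content of the proof.
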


Our approach to find these excluded minors uses the theory of \emph{Hydra-$i$} partial fields developed by Pendavingh and van Zwam~\cite{PvZ2010b}.
A $3$-connected matroid representable over $\GF(5)$ has at most six inequivalent representations \cite{OVW1996}.
Let $M$ be a $3$-connected matroid with a $U_{2,5}$- or $U_{3,5}$-minor.
Pendavingh and van Zwam showed that, for each $i \in \{1,2,3,4,5,6\}$, there is a partial field $\mathbb{H}_i$ such that $M$ is $\mathbb{H}_i$-representable if and only if $M$ has at least $i$ inequivalent representations over $\GF(5)$.
In particular, $\mathbb{H}_1 = \GF(5)$.
Moreover, $\mathbb{H}_5=\mathbb{H}_6$; that is, if $M$ has at least five inequivalent representations over $\GF(5)$, then it has precisely six.
For $i \in \{1,2,3,4\}$, let $\mathcal{N}_i$ be the set of excluded minors for the class of $\mathbb{H}_{i+1}$-representable matroids that are $\mathbb{H}_{i}$-representable.
Then, the matroids in $\mathcal{N}_i$ are \emph{strong stabilizers} for the class of $\mathbb{H}_{i}$-representable matroids \cite{vanZwam2009}; loosely speaking, this means that when considering a $3$-connected matroid $M$ in this class with some matroid~$N$ in $\mathcal{N}_i$ as a minor, then
a representation of $N$ uniquely extends to a representation of $M$.
Thus, using this theory, one can first find the excluded minors for $\mathbb{H}_5$-representability on at most $10$ elements, then let $\mathcal{N}_4$ be those that are $\mathbb{H}_4$-representable, and then use these as seeds in order to find the excluded minors for $\mathbb{H}_4$-representability on at most $10$ elements.
This process can then be repeated to find the excluded minors for $\mathbb{H}_3$-representability on at most $10$ elements.
After two further iterations, one obtains the excluded minors for $\mathbb{H}_1$-representability on at most $10$ elements, and these are the excluded minors for the class of $\GF(5)$-representable matroids, just under another name.

As a slightly more subtle point, note that when an excluded minor $N$ for the class of $\mathbb{H}_{i}$-representable matroids is $\GF(5)$-representable, for some $i \in \{2,3,4,5\}$, it need not be $\mathbb{H}_{i-1}$-representable.
For example, $F_7^-$ is an excluded minor for $\mathbb{H}_5$-representable matroids that is $\mathbb{H}_2$-representable, but not $\mathbb{H}_3$- or $\mathbb{H}_4$-representable.
In this case, where $N$ is $\mathbb{H}_{j}$-representable, for some $j < i-1$, but is not $\mathbb{H}_{j+1}$-representable,
$N$ is an excluded minor for the class of $\mathbb{H}_{\ell}$-representable matroids for each $\ell \in \{j+1,j+2,\dotsc,i\}$, and, in particular, 
it is a member of $\mathcal{N}_j$, so will be used as a seed to find excluded minors for the class of $\mathbb{H}_j$-representable matroids.

The resolution of Rota's conjecture \cite{GGW14} guarantees that there is a finite number of excluded minors for the class of $\GF(5)$-representable matroids.
There are certainly excluded minors for this class on more than 10 elements; in particular, there is an excluded minor on 12 elements, on 14 elements, and on 16 elements \cite{BP20}.
It is natural to ask the value of finding an incomplete list of excluded minors.
For the excluded-minor characterisations of representable matroids that are currently known exactly, the excluded minors have at most 8 elements \cite{GGK2000,HMvZ2011,BOSW2023b}, and there are at most 17 of them in total \cite{BOSW2023b}, so there is little that can be gleaned regarding how one might expect the excluded minors of increasing sizes to be distributed.
However, in light of \cref{thm1}, it seems likely that so far we have only discovered the tip of the iceberg (whereas one could have been a little bit more optimistic if only aware of those on at most 9 elements).
With effort, or more computing power, one might be able to extend the search beyond 10 elements, but reaching (or exceeding) 16 elements appears a long way off.
Nonetheless, in the author's opinion, it is worthwhile to explore how bad things might be, and there is precedent for doing this for minor-closed classes of graphs having similarly unwieldy excluded-minor characterisations \cite{Yu2006,MW2018}.
%

More concretely, a component of this work, when combined with \cite{BOSW2023b}, gets us tantalisingly close to a complete list of the excluded-minors for the class of $\mathbb{H}_5$-representable matroids.
%
It turns out that the class of $\mathbb{H}_5$-representable matroids is closely related to another well-studied class.
For an integer $k \ge 2$, the class of \emph{$k$-regular} matroids is an algebraic generalisation of the class of regular and near-regular matroids, where regular matroids correspond to $0$-regular matroids, and near-regular matroids correspond to $1$-regular matroids (a formal definition of these classes is given in \cref{prelims}).
Although the class of near-regular matroids coincides with the class of matroids representable over all fields of size at least three~\cite{Whittle1997},
the class of $2$-regular matroids is a proper subclass of matroids representable over all fields of size at least four~\cite{Semple1998}.
However, the class of $3$-regular matroids coincides with the class of $\mathbb{H}_5$-representable matroids~\cite{BOSW2023b}.
Thus, attempts to discover the excluded minors for $\mathbb{H}_5$-representable matroids can equivalently be viewed as attempts to discover the excluded minors for $3$-regular matroids.

Brettell, Oxley, Semple, and Whittle showed that an excluded minor for the class of $2$-regular matroids, or for the class of $3$-regular matroids, has at most $15$ elements~\cite{BOSW2023b}.
Brettell and Pendavingh computed the excluded minors for the class of $2$-regular matroids of size at most $15$~\cite{BP20}.
Thus, an excluded-minor characterisation for the class of $2$-regular matroids is now known, and this is an important step towards understanding matroids representable over all fields of size at least four. 

To find the excluded minors for $\GF(5)$-representability of size at most $10$, using the approach described earlier, the first step is to find the excluded minors for the class of $\mathbb{H}_5$-representable matroids of size at most $10$.
However, to complement the aforementioned known bound on an excluded minor for the class of $\mathbb{H}_5$-representable matroids, we extend our search beyond matroids of size $10$.
We do not quite match the bound of $15$, however; we find all excluded minors for the class having at most $13$ elements.

\begin{theorem}
  \label{thm2}
  There are exactly $33$ matroids that are excluded minors for the class of $\mathbb{H}_5$-representable matroids and have at most $13$ elements.
\end{theorem}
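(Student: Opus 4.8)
The plan is to prove \cref{thm2} by an exhaustive but carefully pruned computer search, relying on the severe structural constraints that an excluded minor must satisfy. Two reductions carry most of the load. First, the class of $\mathbb{H}_5$-representable matroids is minor-closed and closed under direct sums and $2$-sums---one can splice partial-field representations along a $1$- or $2$-separation---so every excluded minor is $3$-connected, and hence (being large enough) simple and cosimple; the few degenerate low-rank cases are dismissed directly. Second, since the class is minor-closed, a matroid $M$ is an excluded minor exactly when $M$ is not $\mathbb{H}_5$-representable while $M\ba e$ and $M/e$ are $\mathbb{H}_5$-representable for every element $e$: every proper minor of $M$ is a minor of one of these single-element minors. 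This turns the whole problem into the study of single-element extensions of representable matroids.

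Guided by this, I would construct, for increasing $k$, the complete list $\mathcal{R}_k$ of $3$-connected $\mathbb{H}_5$-representable matroids on $k$ elements, each carrying a representation, with $\mathcal{R}_k$ obtained from $\mathcal{R}_{k-1}$ by forming every $3$-connected single-element extension and coextension of its members (coextensions via duality, under which the class is closed). Tutte's wheels-and-whirls theorem guarantees completeness: if $M$ is $3$-connected and not a wheel or whirl, some $M\ba e$ or $M/e$ is $3$-connected, so every such member of $\mathcal{R}_k$---and every such candidate excluded minor of size $k$---arises as an extension or coextension of a member of $\mathcal{R}_{k-1}$; the wheels and whirls are near-regular, hence $\mathbb{H}_5$-representable, and are adjoined to $\mathcal{R}_k$ directly and never arise as excluded minors. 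Testing each candidate for $\mathbb{H}_5$-representability then both refreshes $\mathcal{R}_k$ (the representable candidates) and flags the excluded minors (the non-representable candidates whose single-element minors are all representable). Running this up to $k = 13$ and reducing modulo isomorphism produces the list to be counted.

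The search is feasible only because representability is decided through representation theory rather than by blind enumeration. The stabilizer results of van Zwam~\cite{vanZwam2009} and of Pendavingh and Van Zwam~\cite{PvZ2010b} ensure that, once a representation of a suitable stabilizing minor is fixed, representations extend essentially uniquely; this makes the $\mathbb{H}_5$-representability test efficient and keeps the number of representations one tracks bounded, so that $\mathcal{R}_k$ is built without an explosion in the number of matrices over $\mathbb{H}_5$ to consider. Since $\mathbb{H}_5$-representability coincides with $3$-regularity~\cite{BOSW2023b}, one may run the computation in the $3$-regular setting, reusing the partial-field arithmetic and representability code developed for the $2$-regular enumeration of Brettell and Pendavingh~\cite{BP20}. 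Unlike the subsequent classes $\mathbb{H}_4, \mathbb{H}_3, \dotsc$, where members of the stabilizer sets $\mathcal{N}_i$ serve as seeds, the $\mathbb{H}_5$ computation inherits no seeds from a larger Hydra class and so must be built from the ground up.

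The hard part will be controlling the growth of $\mathcal{R}_k$ and certifying completeness. The number of $3$-connected $\mathbb{H}_5$-representable matroids rises steeply with $k$, so aggressive isomorph rejection and a tight bound on the extensions tested per matroid are indispensable; this is precisely why the search stops at $13$ rather than reaching the proven excluded-minor size bound of $15$ from \cite{BOSW2023b}. Equally delicate is ruling out any omission: one must confirm that every excluded minor genuinely has a $3$-connected representable single-element minor (handling the wheel, whirl, and low-connectivity exceptions explicitly) and that the representability oracle yields no false positives, so that each of the $33$ matroids reported is certified both as non-$\mathbb{H}_5$-representable and as having only $\mathbb{H}_5$-representable proper minors.
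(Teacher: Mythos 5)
Your overall skeleton (excluded minors are $3$-connected; grow a catalog of representable matroids level by level; candidates are single-element extensions whose own single-element minors are all representable) is logically sound, but the step ``form every $3$-connected single-element extension and coextension of the members of $\mathcal{R}_{12}$'' is exactly the step that does not go through, and the machinery you omit is the actual content of the paper's proof. Two problems compound. First, because you certify completeness only via Tutte's wheels-and-whirls theorem, your catalog $\mathcal{R}_k$ must contain \emph{all} $3$-connected $\mathbb{H}_5$-representable matroids, including every near-regular one; the stabilizer argument you invoke for efficient representability testing then fails for precisely those catalog members that lack a $\{U_{2,5},U_{3,5}\}$-minor, and you are forced to track inequivalent representations. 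The paper avoids this by first proving \cref{u3minors} (via the $2$-regular excluded-minor characterisation): every excluded minor for $3$-regularity either lies in an explicit $13$-element list or has a $\{U_{2,6},U_{3,6},U_{4,6},P_6\}$-minor, and these four matroids are strong stabilizers; completeness of the restricted catalog is then certified by the Splitter-Theorem consequence \cref{nearregconn}, which preserves the $N$-minor, not just $3$-connectivity. Second, and more fatally, the number of abstract single-element extensions of a $12$-element rank-$6$ matroid (all extensions must be considered, since an excluded minor is \emph{not} representable) is far too large to enumerate over the whole catalog: even the much more constrained splice construction already yields over $14$ million candidate $13$-element matroids in the paper's computation.

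This is why the paper switches, for $n\in\{11,12,13\}$, to splicing: a candidate $M$ is reconstructed as a common extension of two catalog matroids $M\ba a$ and $M\ba b$, which requires a pair $\{a,b\}$ whose removal preserves $3$-connectivity \emph{and} the stabilizer minor. Guaranteeing such an $N$-detachable pair is nontrivial; it needs the splitter theorem \cref{detachthm}, whose exceptional outcomes must each be closed off: spiky $3$-separators via \cref{spikelikes}, the $12$-element quad-flowers and nests via the representability analysis of \cref{connsec} culminating in \cref{qfcorr}, and the fact that the detachable pair may only exist after a $\Delta$-$Y$ exchange, which forces the final list to be closed under $\Delta Y$-equivalence as well as duality (legitimate by \cref{osvdelta}). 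None of this appears in your proposal, and without it the enumeration at $13$ elements cannot be completed or certified. Your plan is essentially the paper's method for $n\le 10$ extended past the point where it remains feasible.
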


\noindent
Descriptions of these $33$ matroids appear in \cref{h5u3sec} (see \cref{h5exminors}).

For \cref{thm1}, we borrow from the approach taken in \cite{BP20}; however, to find the excluded minors for $\GF(5)$-representability of size at most $10$, the computational techniques we require are less advanced, since we are only dealing with matroids of size at most $10$.  On the other hand, as we are dealing with much larger sets of excluded minors and stabilizers, careful book-keeping is required.
For \cref{thm2}, the more advanced techniques employed in \cite{BP20} become useful; in particular, when considering matroids of size $n \ge 11$, we can dramatically speed up the computations by only considering compatible pairs of extensions of matroids of size $n-2$ (rather than all extensions of matroids of size $n-1$).
For this approach to be exhaustive while only keeping track of $3$-connected matroids with a certain $N$-minor, we require a splitter theorem for $N$-detachable pairs~\cite{bww3}.
Moreover, there are particular $12$-element matroids for which we cannot guarantee the existence of $N$-detachable pairs; we study these further, to show they are not excluded minors for the class of $\mathbb{H}_5$-representable matroids.


One final observation from this work is that an excluded-minor characterisation for $\mathbb{H}_4$-representable matroids, though extremely difficult, could perhaps also be achievable, putting it in a similar category to dyadic matroids and matroids representable over all fields of size at least four.

This paper is structured as follows.
In the next section, we review partial fields, stabilizers, and other related notions that we need to describe the methodology, and argue the correctness, of our approach.
In the two subsequent sections, we consider the innermost layer of the Hydra hierarchy, $\mathbb{H}_5$-representable matroids.
First, we study $12$-element matroids that have no $N$-detachable pairs, in \cref{connsec}. 
Then, in \cref{h5u3sec}, we obtain the excluded minors for the class of $\mathbb{H}_5$-representable matroids on at most $13$ elements.
In \cref{hmidsec}, we consider the excluded minors for the remaining layers of the Hydra hierarchy: that is, the class of $\mathbb{H}_i$-representable matroids for $i \in \{1,2,3,4\}$.
We finish with some final remarks in \cref{finalsec}, including the observation that each Hydra partial field is universal.

Our notation and terminology follows Oxley~\cite{oxley}, 
unless specified otherwise.
We note that some of the results of this paper have also appeared, informally, on the MatroidUnion blog\footnote{See http://matroidunion.org/?p=3815.}.


\section{Preliminaries}
\label{prelims}

For a matroid $M$ and a set of matroids $\mathcal{N}$, we say that $M$ has an $\mathcal{N}$-minor if $M$ has a minor isomorphic to $N$ for some $N \in \mathcal{N}$.

\subsection*{Partial fields}

A \textit{partial field} is a pair $(R, G)$, where $R$ is a commutative ring with unity, and $G$ is a subgroup of the group of units of $R$ such that $-1 \in G$.
Note that $(\mathbb{F}, \mathbb{F}^*)$ is a partial field for any field $\mathbb{F}$.

For disjoint sets $X$ and $Y$, we refer to a matrix with rows labelled by elements of $X$ and columns labelled by elements of $Y$ as an \emph{$X \times Y$ matrix}.
Let $\mathbb{P}=(R, G)$ be a partial field, and let $A$ be an $X\times Y$ matrix with entries from $R$. Then $A$ is a $\mathbb{P}$-\textit{matrix} if every square submatrix of $A$ with a non-zero determinant is in $G$. If $X'\subseteq X$ and $Y'\subseteq Y$, then we write $A[X',Y']$ to denote the submatrix of $A$ with rows labelled by $X'$ and columns labelled by $Y'$.
%
We also 
say that $p$ is an element of $\mathbb{P}$, and write $p \in \mathbb{P}$, if $p \in G$ or $p=0$. 
 
\begin{lemma}[{\cite[Theorem 2.8]{PvZ2010b}}]
\label{pmatroid}
Let $\mathbb{P}$ be a partial field, and let $A$ be an $X\times Y$ $\mathbb{P}$-matrix, where $X$ and $Y$ are disjoint sets. Let
\begin{equation*}
\mathcal{B}=\{X\}\cup \{X\triangle Z : |X\cap Z|=|Y\cap Z|, \det(A[X\cap Z,Y\cap Z])\neq 0\}. 
\end{equation*}
 Then $\mathcal{B}$ is the family of bases of a matroid on ground set $X\cup Y$.
\end{lemma}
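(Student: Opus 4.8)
The plan is to realise $\mathcal{B}$ as the non-vanishing locus of the Plücker-coordinate function attached to the matrix $D = [\,I_X \mid A\,]$, and then to verify the basis-exchange axiom by means of a Grassmann--Plücker determinantal identity, using crucially that the $\mathbb{P}$-matrix hypothesis forces every relevant subdeterminant to be a \emph{unit} of $R$. First I would set up notation: let $D$ be the $X \times (X \cup Y)$ matrix $[\,I_X \mid A\,]$, where $I_X$ is the $X \times X$ identity, and for each $B \subseteq X \cup Y$ with $|B| = |X|$ fix an ordering and put $p(B) = \det D[X, B]$, the determinant of the square submatrix of $D$ on the columns $B$. The value $p(B)$ depends on the chosen orderings only up to sign, which is harmless: the only features used below are whether $p(B)$ vanishes and whether it lies in $G$, and $G$ is closed under negation since $-1 \in G$.

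Second, I would prove the determinantal identity linking $p$ to subdeterminants of $A$. Writing $B = X \triangle Z$ with $Z_X = X \cap Z$ and $Z_Y = Y \cap Z$, the columns of $B$ lying in $X$ are exactly the standard basis vectors indexed by $X \setminus Z_X$; reordering rows and columns therefore puts $D[X,B]$ into the block-triangular form $\bigl(\begin{smallmatrix} I & * \\ 0 & A[Z_X, Z_Y] \end{smallmatrix}\bigr)$ when $|Z_X| = |Z_Y|$, while $D[X,B]$ has a zero row (hence vanishing determinant) otherwise. Expanding gives $p(X \triangle Z) = \pm \det A[X \cap Z, Y \cap Z]$. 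Consequently $\mathcal{B} = \{\, B \subseteq X \cup Y : |B| = |X|,\ p(B) \neq 0 \,\}$; in particular $X \in \mathcal{B}$ because $p(X) = \det I_X = 1 \neq 0$, so $\mathcal{B}$ is nonempty and all its members have size $|X|$. Moreover, since $A$ is a $\mathbb{P}$-matrix and $-1 \in G$, every $B \in \mathcal{B}$ satisfies $p(B) \in G$, i.e.\ $p(B)$ is a unit of $R$.

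Third comes the substantive step, the basis-exchange axiom: given $B_1, B_2 \in \mathcal{B}$ and $x \in B_1 \setminus B_2$, I must produce $y \in B_2 \setminus B_1$ with $(B_1 \setminus \{x\}) \cup \{y\} \in \mathcal{B}$. Here I would invoke the Grassmann--Plücker relation
\[
p(B_1)\, p(B_2) = \sum_{y \in B_2 \setminus B_1} \pm\, p\bigl((B_1 \setminus \{x\}) \cup \{y\}\bigr)\, p\bigl((B_2 \setminus \{y\}) \cup \{x\}\bigr),
\]
which is a polynomial identity in the entries of $D$ and hence holds over the arbitrary commutative ring $R$ (it suffices to check it over a polynomial ring in the entries as indeterminates, where it is the classical Laplace expansion). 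The left-hand side is a product of two elements of $G$, hence a unit, hence nonzero; therefore some summand on the right is nonzero, and a product in $R$ can be nonzero only if both its factors are. For that summand $p\bigl((B_1 \setminus \{x\}) \cup \{y\}\bigr) \neq 0$, so $(B_1 \setminus \{x\}) \cup \{y\} \in \mathcal{B}$, as required; this establishes that $\mathcal{B}$ is the set of bases of a matroid on $X \cup Y$.

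I expect the main obstacle to be not conceptual but the careful verification of the Grassmann--Plücker identity with correct signs over a commutative ring, which I would handle by reducing to the universal polynomial-ring case so that no field structure is invoked. The one inference that genuinely uses the partial-field structure is the passage ``left-hand side nonzero $\Rightarrow$ some summand nonzero $\Rightarrow$ both its factors nonzero'': over a general ring this would fail if $p(B_1)p(B_2)$ could vanish with both factors nonzero (zero divisors), but the $\mathbb{P}$-matrix hypothesis guarantees $p(B_1), p(B_2) \in G$ are units, so their product cannot vanish. This is exactly the role played by the requirement that $G$ be a group of units containing $-1$.
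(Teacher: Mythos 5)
The paper does not prove this lemma itself --- it is quoted without proof from Pendavingh and Van Zwam \cite{PvZ2010b} --- and your argument is correct and is essentially the standard proof of that result: pass to $[\,I_X \mid A\,]$, identify $\mathcal{B}$ with the nonvanishing maximal subdeterminants via the block-triangular reduction $p(X\triangle Z)=\pm\det A[X\cap Z,Y\cap Z]$, and verify basis exchange through the Grassmann--Pl\"ucker identity, with the partial-field hypothesis used exactly where you say it is, namely to make $p(B_1)p(B_2)$ a unit and hence nonzero over a ring that may have zero divisors. One cosmetic remark: since $|X\triangle Z|=|X|-|X\cap Z|+|Y\cap Z|$, every $B$ with $|B|=|X|$ automatically satisfies $|X\cap Z|=|Y\cap Z|$ for $Z=X\triangle B$, so the ``zero row'' case in your second step is vacuous rather than needed.
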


For an $X\times Y$ $\mathbb{P}$-matrix $A$, we let $M[A]$ denote the matroid in \cref{pmatroid}, and say that $A$ is a \emph{$\mathbb{P}$-representation} of $M[A]$.
Note that this is sometimes known as a reduced $\mathbb{P}$-representation in the literature; here, all representations will be ``reduced'', so we simply refer to them as representations.
A matroid~$M$ is $\mathbb{P}$-\textit{representable} if there exists some $\mathbb{P}$-matrix $A$ such that 
$M \cong M[A]$.

For partial fields $\mathbb{P}_1$ and $\mathbb{P}_2$, a function
$\phi : \mathbb{P}_1 \rightarrow \mathbb{P}_2$ is a \emph{homomorphism} if
\begin{enumerate}
  \item $\phi(1) = 1$,
  \item $\phi(pq) = \phi(p)\phi(q)$ for all $p, q \in \mathbb{P}_1$, and
  \item $\phi(p) +\phi(q) = \phi(p +q)$ for all $p, q \in \mathbb{P}_1$ such that $p +q \in \mathbb{P}_1$.
\end{enumerate}
The existence of a 
homomorphism from $\mathbb{P}_1$ to $\mathbb{P}_2$ certifies that any $\mathbb{P}_1$-representable matroid is also $\mathbb{P}_2$-representable:

\begin{lemma}[{\cite[Corollary 2.9]{PvZ2010b}}]
  Let $\mathbb{P}_1$ and $\mathbb{P}_2$ be partial fields and let $\phi : \mathbb{P}_1 \rightarrow \mathbb{P}_2$ be a 
  homomorphism.
  If a matroid is $\mathbb{P}_1$-representable, then it is also $\mathbb{P}_2$-representable.
  In particular, 
  if $[a_{ij}]$ is a $\mathbb{P}_1$-representation of a matroid $M$, then $[\phi(a_{ij})]$ is a $\mathbb{P}_2$-representation of $M$.
\end{lemma}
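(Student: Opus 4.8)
The plan is to reduce the whole statement to a single determinant-compatibility fact: that $\phi$ commutes with the formation of every square subdeterminant of a $\mathbb{P}_1$-matrix. Concretely, let $M$ be $\mathbb{P}_1$-representable, fix an $X \times Y$ $\mathbb{P}_1$-matrix $A = [a_{ij}]$ with $M \cong M[A]$, and set $A' = [\phi(a_{ij})]$. First I would record the elementary consequences of the homomorphism axioms that I will use repeatedly: $\phi(0) = 0$ (apply (iii) to $0 + 0$), $\phi(-1) = -1$ (apply (iii) to $-1 + 1 = 0$ together with (i)), and the fact that $\phi$ carries $\mathbb{P}_1 \setminus \{0\}$ into $\mathbb{P}_2 \setminus \{0\}$, since for a unit $p \in G_1$ we have $\phi(p)\phi(p^{-1}) = \phi(1) = 1$, forcing $\phi(p) \neq 0$. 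Thus $\phi$ both preserves and reflects the property of an element being zero.

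The heart of the argument is the claim that for every square submatrix $A[X', Y']$ (with $X' \subseteq X$, $Y' \subseteq Y$, and $|X'| = |Y'|$),
\[
\det\big(A'[X', Y']\big) = \phi\big(\det(A[X', Y'])\big),
\]
which I would prove by induction on $|X'|$. The $1\times 1$ case is immediate, and the $2\times 2$ case follows directly from the axioms: for a $2\times 2$ submatrix the subdeterminant $ad - bc$ lies in $\mathbb{P}_1$ precisely because $A$ is a $\mathbb{P}_1$-matrix, so (iii) applies to the sum $ad + (-bc)$ and, combined with (ii) and $\phi(-1) = -1$, yields $\phi(ad - bc) = \phi(a)\phi(d) - \phi(b)\phi(c)$.

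The delicate point, which I expect to be the main obstacle, is the inductive step for larger submatrices. One cannot simply expand along a row, because the \emph{intermediate} partial sums appearing in a cofactor expansion need not lie in $\mathbb{P}_1$, whereas axiom (iii) only licenses applying $\phi$ across a sum when that sum is itself a partial-field element. To circumvent this I would pivot rather than expand. Choosing a nonzero entry $a_{xy}$ of $A[X', Y']$, I would invoke the standard fact that pivoting a $\mathbb{P}_1$-matrix on a nonzero entry produces another $\mathbb{P}_1$-matrix: each Schur-complement entry equals $(a_{ij}a_{xy} - a_{iy}a_{xj})/a_{xy}$, a $2\times 2$ subdeterminant of $A$ divided by $a_{xy} \in G_1$, hence again an element of $\{0\}\cup G_1 = \mathbb{P}_1$. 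Writing $C$ for this Schur complement, we have $\det(A[X', Y']) = \pm\, a_{xy}\det(C)$, and $C$ is a smaller $\mathbb{P}_1$-matrix. Since $\phi$ commutes with $2\times 2$ determinants and with multiplication and division by units, it commutes with each entry of $C$; the only thing to check is that applying $\phi$ entrywise to the pivot of $A$ gives exactly the pivot of $A'$, and this reduces once more to the $2\times 2$ case. The induction hypothesis then gives $\phi(\det C) = \det(\phi(C))$, and (ii) finishes the step.

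With the displayed identity established, the lemma follows quickly. Because $A$ is a $\mathbb{P}_1$-matrix, every square subdeterminant lies in $\{0\} \cup G_1$, so its $\phi$-image lies in $\{0\} \cup G_2$; hence $A'$ is a $\mathbb{P}_2$-matrix. Moreover $\phi$ sends nonzero subdeterminants to nonzero ones and zero to zero, so $\det(A[X', Y']) \neq 0$ if and only if $\det(A'[X', Y']) \neq 0$. Substituting this equivalence into the basis description of \cref{pmatroid}, applied to $A$ over $\mathbb{P}_1$ and to $A'$ over $\mathbb{P}_2$, shows that $M[A]$ and $M[A']$ have identical base families. Therefore $M[A'] = M[A] \cong M$, so $[\phi(a_{ij})]$ is a $\mathbb{P}_2$-representation of $M$, and in particular $M$ is $\mathbb{P}_2$-representable.
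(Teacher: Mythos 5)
The paper does not prove this lemma; it is imported by citation from Pendavingh and Van Zwam, so there is no in-paper argument to compare against. Your proposal is correct and is essentially the standard proof from the cited source: the key identity $\det(\phi(A)[X',Y'])=\phi(\det(A[X',Y']))$ is established by induction via pivoting rather than cofactor expansion, which is exactly the right move, since you correctly identify that intermediate partial sums in an expansion need not lie in $\mathbb{P}_1$ and so axiom (iii) cannot be applied to them. One small caution: your parenthetical justification of the pivoting step only checks that each Schur-complement \emph{entry} lies in $\mathbb{P}_1$, which is weaker than the statement you actually need for the induction hypothesis to apply, namely that the pivoted matrix is again a $\mathbb{P}_1$-matrix (all of its subdeterminants lie in $\mathbb{P}_1$); you do invoke that stronger fact as standard, which is legitimate, but the entrywise computation should not be presented as its proof. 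The only other (trivial) omission is the base/degenerate case where the square submatrix has no nonzero entry, so no pivot exists; there the determinant is $0$ on both sides. The concluding step --- that $\phi$ preserves and reflects vanishing of subdeterminants, so the basis families in \cref{pmatroid} coincide --- is exactly right.
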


Representability over a partial field can be used to characterise representability over each field in a set of fields.  Indeed,
for any finite set of fields $\mathcal{F}$, there exists a partial field~$\mathbb{P}$ such that a matroid is 
$\mathcal{F}$-representable
if and only if it is $\mathbb{P}$-representable \cite[Corollary~2.20]{PvZ2010a}.


  Let $M$ be a matroid.
  Pendavingh and van Zwam described~\cite[Section~4.2]{PvZ2010b} a canonical construction of a partial field with the property that for every partial field~$\mathbb{P}$, the matroid $M$ is $\mathbb{P}$-representable if and only if there exists a homomorphism $\phi : \mathbb{P}_M \rightarrow \mathbb{P}$ (see also \cite{BL21}).
  We call the partial field $\mathbb{P}_M$ the \emph{universal partial field of $M$}.

\subsection*{Stabilizers}

Let $\mathbb{P} = (R,G)$ be a partial field, and let $A$ and $A'$ be $\mathbb{P}$-matrices.
We say that $A$ and $A'$ are \emph{scaling equivalent} if $A'$ can be obtained from $A$ by scaling rows and columns by elements of $G$.
If $A'$ can be obtained from $A$ by scaling, pivoting, and permuting rows and permuting columns (permuting labels at the same time), then we say that $A$ and $A'$ are \emph{projectively equivalent}.
If $A'$ can be obtained from $A$ by scaling, pivoting, permuting rows and permuting columns, and also applying automorphisms of $\mathbb{P}$, then we say that $A$ and $A'$ are \emph{algebraically equivalent}.

We say that a matroid $M$ is \emph{uniquely $\mathbb{P}$-representable} if any two $\mathbb{P}$-representations of $M$ are algebraically equivalent.
On the other hand, two $\mathbb{P}$-representations of $M$ are \emph{inequivalent} if they are not algebraically equivalent.

Let $M$ and $N$ be $\mathbb{P}$-representable matroids, where $M$ has an $N$-minor.
Then \emph{$N$ stabilizes $M$ over $\mathbb{P}$} if for any scaling-equivalent $\mathbb{P}$-representations $A_1'$ and $A_2'$ of $N$ that extend to $\mathbb{P}$-representations $A_1$ and $A_2$ of $M$, respectively, $A_1$ and $A_2$ are scaling equivalent.

For a partial field~$\mathbb{P}$, let $\mathcal{M}(\mathbb{P})$ be the class of matroids representable over 
$\mathbb{P}$.
A matroid $N \in \mathcal{M}(\mathbb{P})$ is a \emph{$\mathbb{P}$-stabilizer} if, for any $3$-connected matroid $M \in \mathcal{M}(\mathbb{P})$ having an $N$-minor, the matroid $N$ stabilizes $M$ over $\mathbb{P}$.
%
%
%
%
Following Geelen et al.~\cite{GOVW1998}, we say that 
a matroid $N$ is a \emph{strong $\mathbb{P}$-stabilizer} if, for any $3$-connected matroid $M \in \mathcal{M}(\mathbb{P})$ having an $N$-minor, the matroid $N$ stabilizes $M$ over $\mathbb{P}$, and every $\mathbb{P}$-representation of $N$ extends to a $\mathbb{P}$-representation of $M$.


\subsection*{The Hydra-\texorpdfstring{$i$}{i} partial fields}

The Hydra-$5$ partial field is:
$$\mathbb{H}_5 = \left(\mathbb{Q}(\alpha, \beta, \gamma), \left<-1, \alpha, \beta, \gamma, \alpha - 1, \beta - 1, \gamma - 1, \alpha - \gamma, \gamma - \alpha\beta, (1 - \gamma) - (1 - \alpha)\beta\right>\right)$$ where $\alpha,\beta,\gamma$ are indeterminates.

The Hydra-$4$ partial field is:
$$\mathbb{H}_4 = \left(\mathbb{Q}(\alpha,\beta), \left<-1,\alpha,\beta,\alpha-1,\beta-1,\alpha\beta-1,\alpha+\beta-2\alpha\beta\right>\right)$$ where $\alpha,\beta$ are indeterminates.

The Hydra-$3$ partial field is:
$$\mathbb{H}_3 = \left(\mathbb{Q}(\alpha), \left<-1,\alpha,1-\alpha,\alpha^2-\alpha+1\right>\right)$$ where $\alpha$ is an indeterminate.

The Hydra-$2$ partial field is:
$$\mathbb{H}_2 = \left(\mathbb{Z}\left[i,1/2\right], \left<i,1-i\right>\right)$$ where $i$ is a root of $x^2 +1=0$.

We also let $\mathbb{H}_1 = \GF(5)$, and $\mathbb{H}_6 = \mathbb{H}_5$.

For each $i \in \{1,2,3,4\}$, there is a homomorphism $\phi_i :  \mathbb{H}_{i+1} \rightarrow \mathbb{H}_i$.  For example, let $\phi_4$ be determined by $\phi_4(\alpha) = \alpha$, $\phi_4(\beta) = \frac{1}{1-\alpha}$, and $\phi_4(\gamma) = \frac{\alpha(\beta-1)}{\beta(1-\alpha)}$; let $\phi_3$ be determined by $\phi_3(\alpha) = \alpha$ and $\phi_4(\beta) = \frac{\alpha}{\alpha-1}$; let $\phi_2$ be determined by $\phi_2(\alpha) = i$; and let $\phi_1$ be determined by $\phi_1(i) = 2$.

\begin{lemma}[{\cite[Theorem~5.7]{PvZ2010b}}]
  \label{hi}
  Let $M$ be a $\GF(5)$-representable matroid that is $3$-connected and has a $\{U_{2,5},U_{3,5}\}$-minor, and let $i \in \{1,2,3,4,5,6\}$.  The matroid $M$ is $\mathbb{H}_i$-representable if and only if $M$ has at least $i$ inequivalent representations over $\GF(5)$.
\end{lemma}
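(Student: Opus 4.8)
The plan is to prove the equivalence separately for each $i$, after two reductions. First, since $U_{3,5}=U_{2,5}^*$ and both $\mathbb{P}$-representability (for every partial field $\mathbb{P}$) and the number of inequivalent $\GF(5)$-representations are invariant under matroid duality, I may assume that $M$ has a $U_{2,5}$-minor; the $U_{3,5}$ case then follows by applying the argument to $M^*$. Second, the case $i=1$ is immediate since $\mathbb{H}_1=\GF(5)$ and $M$ is $\GF(5)$-representable, while the stated homomorphisms $\mathbb{H}_{i+1}\to\mathbb{H}_i$ give (via the homomorphism lemma above) the easy implication that $\mathbb{H}_{i+1}$-representability forces $\mathbb{H}_i$-representability, matching the trivial direction ``at least $i+1$ implies at least $i$''. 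So the substance is the two nontrivial implications for a general~$i$.

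I would phrase these through the universal partial field: by its defining property, $M$ is $\mathbb{H}_i$-representable exactly when there is a homomorphism $\mathbb{P}_M\to\mathbb{H}_i$, and $M$ has at least $i$ inequivalent $\GF(5)$-representations exactly when $\mathbb{P}_M$ admits at least $i$ pairwise-inequivalent homomorphisms to $\GF(5)$. The hypotheses that $M$ is $3$-connected with a $U_{2,5}$-minor are what make $\mathbb{P}_M$ tractable: after fixing a representation of the minor, the representation theory of $M$ is controlled by that of $U_{2,5}$, whose universal partial field one computes directly to be $\mathbb{H}_5$ (the entries of a $2\times 5$ matrix in standard form, together with their cross-ratios, produce the indeterminates $\alpha,\beta,\gamma$ and exactly the fundamental elements listed in the definition of $\mathbb{H}_5$). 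With this in hand, the target reduces to showing that $\mathbb{H}_i$ is universal among partial fields carrying $i$ compatible $\GF(5)$-points.

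For the implication that $\mathbb{H}_i$-representability yields at least $i$ inequivalent $\GF(5)$-representations, I would start from an $\mathbb{H}_i$-matrix $A$ for $M$ and exhibit $i$ homomorphisms $\phi_1,\dots,\phi_i\colon\mathbb{H}_i\to\GF(5)$ for which the $\GF(5)$-matrices $[\phi_k(a_{\ell m})]$ are pairwise inequivalent; the existence of these $i$ ``coordinate'' homomorphisms is read off from the explicit defining data of $\mathbb{H}_i$. Pairwise inequivalence is checked on the $U_{2,5}$-minor: I would arrange that the $\phi_k$ send the relevant coordinates to points of the projective line lying in distinct orbits, so that the restricted representations of $U_{2,5}$ are already inequivalent, and hence so are the representations of $M$. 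Conversely, given $i$ pairwise inequivalent $\GF(5)$-representations of $M$, I would normalise them to share a common representation of the $U_{2,5}$-minor and then amalgamate them into a single matrix $A$ over $\mathbb{H}_i$, where each entry records the compatible $i$-tuple of $\GF(5)$-entries; the passage from an $\mathbb{H}_i$-matrix to its $i$ coordinate specialisations is exactly inverse to this. The hard part will be verifying that this amalgamated $A$ is genuinely an $\mathbb{H}_i$-matrix, i.e.\ that every nonzero subdeterminant lands in the fundamental group of $\mathbb{H}_i$; this is where the precise choice of generators of $\mathbb{H}_i$ must be shown to capture exactly the subdeterminants forced by $i$ simultaneous, compatible $\GF(5)$-specialisations, and where the bulk of the work lies.

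Finally, the values $i=5$ and $i=6$ must be reconciled with $\mathbb{H}_5=\mathbb{H}_6$: taken together, the two instances of the statement assert that no such $M$ has exactly five inequivalent representations. I would deduce this from the symmetry acting on the six representations of the $U_{2,5}$-minor, showing that any five of them algebraically determine the sixth, so that ``at least five'' collapses to ``exactly six''; combined with the bound of six inequivalent representations, this closes the remaining case and completes the equivalence for all $i\in\{1,\dots,6\}$.
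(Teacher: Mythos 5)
First, note that the paper does not prove this statement at all: it is imported verbatim as Theorem~5.7 of Pendavingh and Van Zwam \cite{PvZ2010b}, so there is no internal proof to compare against. Your attempt therefore has to be measured against the argument in that source, whose overall strategy (realising $\mathbb{H}_i$ as a partial field recording $i$ compatible $\GF(5)$-representations of a stabilizing $U_{2,5}$- or $U_{3,5}$-minor, then specialising and amalgamating) your second and third paragraphs do broadly reproduce.

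There is, however, a concrete error at the foundation of your reduction. You assert that the universal partial field of $U_{2,5}$ is $\mathbb{H}_5$, computed from ``the entries of a $2\times 5$ matrix in standard form''. This is false: a reduced representation of $U_{2,5}$ in standard form has only two free parameters, and indeed $\mathbb{P}_{U_{2,5}}=\mathbb{U}_2$, the $2$-regular partial field (the paper itself records that $\mathbb{P}_{U_{2,3+k}}=\mathbb{U}_k$, and separately that $\mathbb{H}_5\cong\mathbb{U}_3=\mathbb{P}_{U_{2,6}}$). The three indeterminates $\alpha,\beta,\gamma$ of $\mathbb{H}_5$ do not arise from the universal partial field of $U_{2,5}$; they arise from recording several $\GF(5)$-specialisations simultaneously. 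Since $\mathbb{U}_2$ maps homomorphically to every field with at least four elements, your proposed reduction to ``$\mathbb{H}_i$ is universal among partial fields carrying $i$ compatible $\GF(5)$-points'' does not get off the ground as stated. Beyond this, the two steps you yourself flag as the hard part are exactly where the content of the theorem lives and are not carried out: (i) verifying that the matrix amalgamated from $i$ pairwise inequivalent $\GF(5)$-representations is genuinely an $\mathbb{H}_i$-matrix (every nonzero subdeterminant landing in the group of units), which requires the strong-stabilizer property of $U_{2,5}$ and $U_{3,5}$ and an analysis of the attainable cross ratios; and (ii) the collapse of ``at least five'' to ``exactly six'', which is not a formal symmetry consequence but the separate fact $\mathbb{H}_6=\mathbb{H}_5$ and needs its own verification. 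Finally, the counting form of the universal-partial-field property you invoke (inequivalent $\GF(5)$-representations of $M$ corresponding to pairwise inequivalent homomorphisms $\mathbb{P}_M\to\GF(5)$) is strictly stronger than the existence statement the paper adopts as the definition, and would itself need to be proved or cited.
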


For $\mathbb{H}_2$-representable matroids that are $3$-connected with a $\{U_{2,5},U_{3,5}\}$-minor, the two inequivalent $\GF(5)$-representations can be obtained by substituting $i \in \{2,3\}$.
For $\mathbb{H}_3$-representable matroids, three inequivalent $\GF(5)$-representations can be obtained by substituting $\alpha \in \{2,3,4\}$.
For $\mathbb{H}_4$-representable matroids, four inequivalent $\GF(5)$-representations can be obtained by substituting $$(\alpha,\beta) \in \{(2,2), (3,3), (3,4), (4,3)\}.$$
For $\mathbb{H}_5$-representable matroids, six inequivalent $\GF(5)$-representations can be obtained by substituting $$(\alpha,\beta,\gamma) \in \{(2,3,3), (3,2,2), (4,3,3), (2,4,4), (3,2,4), (4,4,2)\}.$$

\begin{corollary}[see {\cite[Lemma~5.17]{PvZ2010b}}]
  \label{h5h6}
Let $M$ be a $\GF(5)$-representable matroid that is $3$-connected and has a $\{U_{2,5},U_{3,5}\}$-minor.  The matroid $M$ is $\mathbb{H}_5$-representable if and only if $M$ has six inequivalent representations over $\GF(5)$.
\end{corollary}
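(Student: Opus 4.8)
The plan is to obtain \cref{h5h6} as an immediate consequence of \cref{hi} together with two facts recorded earlier in the paper: that $\mathbb{H}_6 = \mathbb{H}_5$ by definition, and that a $3$-connected $\GF(5)$-representable matroid has at most six inequivalent representations over $\GF(5)$ \cite{OVW1996}. The point is that \cref{hi} supplies, for every $i \in \{1,2,3,4,5,6\}$, the equivalence between $\mathbb{H}_i$-representability and having at least $i$ inequivalent $\GF(5)$-representations; I would apply it at the index $i = 6$ and then convert ``at least six'' into ``exactly six'' using the upper bound.

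First I would observe that, since $\mathbb{H}_6 = \mathbb{H}_5$, the matroid $M$ is $\mathbb{H}_5$-representable if and only if it is $\mathbb{H}_6$-representable. Next, applying \cref{hi} with $i = 6$ (the hypotheses on $M$ being exactly those required there), $M$ is $\mathbb{H}_6$-representable if and only if $M$ has at least six inequivalent representations over $\GF(5)$. Finally, because $M$ is $3$-connected and $\GF(5)$-representable, it has at most six inequivalent representations \cite{OVW1996}; hence for such an $M$, having at least six is the same as having exactly six. Chaining these three equivalences yields the statement.

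There is no genuine obstacle here: the content is carried entirely by \cref{hi} and the bound of \cite{OVW1996}, and the only thing to verify is that the hypotheses of \cref{hi} ($3$-connected, $\GF(5)$-representable, with a $\{U_{2,5},U_{3,5}\}$-minor) coincide verbatim with those of \cref{h5h6}, which they do. The sole subtlety worth flagging is that one must invoke \cref{hi} at the index $i = 6$ rather than $i = 5$: applying it at $i = 5$ would only yield ``at least five'' inequivalent representations, and it is precisely the collapse $\mathbb{H}_5 = \mathbb{H}_6$ that upgrades this to the sharp count of six.
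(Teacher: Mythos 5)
Your argument is correct and matches the paper's intended derivation: the corollary is stated without proof as a consequence of \cref{hi}, the identification $\mathbb{H}_6 = \mathbb{H}_5$, and the bound of at most six inequivalent $\GF(5)$-representations from \cite{OVW1996}, which is exactly the chain of equivalences you give. Your remark that one must apply \cref{hi} at $i=6$ rather than $i=5$ is the right point to flag, and nothing further is needed.
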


A matroid is \emph{dyadic} if it is representable over $\GF(3)$ and $\GF(5)$.
Equivalently, a matroid is dyadic if it is $\mathbb{D}$-representable, where $\mathbb{D}$ is the partial field $$(\mathbb{Z}[1/2],\left<-1,2\right>).$$
There is a homomorphism from $\mathbb{D}$ to every field with characteristic not two.
There is also a homomorphism from $\mathbb{D}$ to $\mathbb{H}_2$.
Note that, although a dyadic matroid is $\mathbb{H}_2$-representable, it has no $\{U_{2,5},U_{3,5}\}$-minor, so \cref{hi} does not apply.

\subsection*{The \texorpdfstring{$k$}{k}-regular partial fields}

For a non-negative integer $k$, the \emph{$k$-regular} partial field is:
$$\mathbb{U}_k = (\mathbb{Q}(\alpha_1,\dots,\alpha_k), \left<\{x-y : x,y \in \{0,1,\alpha_1,\dotsc,\alpha_k\}\textrm{ and }x \neq y\}\right>),$$ where $\alpha_1,\dotsc,\alpha_k$ are indeterminates.
A matroid is \emph{$k$-regular} if it is representable over the $k$-regular partial field.

Note that $\mathbb{U}_k$ is the universal partial field of $U_{2,3+k}$ \cite[Theorem 3.3.24]{vanZwam2009}.
It is easy to see that, for any non-negative integer $k$, there is a homomorphism from $\mathbb{U}_k$ to $\mathbb{U}_{k+1}$.

In particular, the \emph{$0$-regular} (or just \emph{regular}) partial field is
$$\mathbb{U}_0 = (\mathbb{Q}, \{1,-1\});$$
the \emph{$1$-regular} (or \emph{near-regular}) partial field is
$$\mathbb{U}_1 = (\mathbb{Q}(\alpha), \left<-1,\alpha,\alpha-1\right>),$$
where $\alpha$ is an indeterminate; the \emph{$2$-regular} partial field is
$$\mathbb{U}_2 = (\mathbb{Q}(\alpha_1,\alpha_2), \left<-1,\alpha_1,\alpha_2,\alpha_1-1,\alpha_2-1,\alpha_1-\alpha_2\right>),$$ where $\alpha_1,\alpha_2$ are indeterminates; and the \emph{$3$-regular} partial field is
$$\mathbb{U}_3 = (\mathbb{Q}(\alpha_1,\alpha_2,\alpha_3), \left<-1,\alpha_1,\alpha_2,\alpha_3,\alpha_1-1,\alpha_2-1,\alpha_3-1,\alpha_1-\alpha_2,\alpha_1-\alpha_3,\alpha_2-\alpha_3\right>),$$ where $\alpha_1,\alpha_2,\alpha_3$ are indeterminates.

Van Zwam~\cite{vanZwam2009} observed that there is a homomorphism from $\mathbb{U}_3$ to $\mathbb{H}_5$.
It turns out that there is a homomorphism $\phi : \mathbb{U}_3 \rightarrow \mathbb{H}_5$ where $\phi$ is a bijection~\cite{BOSW2023b}.

\begin{lemma}[{\cite[Lemma~2.25]{BOSW2023b}}]
  \label{3reglemma}
  The partial fields $\mathbb{H}_5$ and $\mathbb{U}_3$ are isomorphic.
  In particular, a matroid is $3$-regular if and only if it is $\mathbb{H}_5$-representable.
\end{lemma}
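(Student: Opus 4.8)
The plan is to prove the isomorphism directly, by writing down an explicit map and checking it matches the two groups of units, and then to deduce the statement about matroid classes as a formal consequence. Since both $\mathbb{U}_3$ and $\mathbb{H}_5$ are presented as pairs $(R,G)$ in which $R$ is a rational function field of transcendence degree three over $\mathbb{Q}$, it suffices to produce a field isomorphism $\phi\colon \mathbb{Q}(\alpha_1,\alpha_2,\alpha_3)\to\mathbb{Q}(\alpha,\beta,\gamma)$ carrying the group of units of $\mathbb{U}_3$ onto the group of units of $\mathbb{H}_5$: restricting such a $\phi$ to $G\cup\{0\}$ yields a bijective partial-field homomorphism whose inverse is again a homomorphism (the additive condition is automatic since $\phi$ is a ring map in both directions), which is exactly a partial-field isomorphism. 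So the whole game is to locate $\phi$ and verify $\phi(G_{\mathbb{U}_3})=G_{\mathbb{H}_5}$.

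To find $\phi$ I would use the interpretation of $\mathbb{U}_3$ as the universal partial field of $U_{2,6}$ (the cited fact that $\mathbb{U}_k$ is universal for $U_{2,3+k}$): its unit group is generated by the pairwise differences of the six collinear points $0,1,\infty,\alpha_1,\alpha_2,\alpha_3$. I would therefore look for six points over $\mathbb{H}_5$, three of them $0,1,\infty$, whose pairwise differences generate $G_{\mathbb{H}_5}$, and this search produces the candidate
\[\phi(\alpha_1)=\alpha,\qquad \phi(\alpha_2)=\gamma,\qquad \phi(\alpha_3)=\frac{\gamma-\alpha\beta}{1-\beta}.\]
The forward inclusion $\phi(G_{\mathbb{U}_3})\subseteq G_{\mathbb{H}_5}$ is then a finite check on the ten generators. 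The only cases needing computation are those involving $\alpha_3$, where one finds $\phi(\alpha_3-1)=-\big((1-\gamma)-(1-\alpha)\beta\big)/(1-\beta)$, $\phi(\alpha_1-\alpha_3)=(\alpha-\gamma)/(1-\beta)$, and $\phi(\alpha_2-\alpha_3)=\beta(\alpha-\gamma)/(1-\beta)$, each visibly a product of powers of generators of $\mathbb{H}_5$.

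Next I would confirm $\phi$ is a field isomorphism by recovering $\beta=\big(\phi(\alpha_3)-\gamma\big)/\big(\phi(\alpha_3)-\alpha\big)$; this shows $\alpha,\beta,\gamma$ all lie in the image, so $\phi$ is a surjection between fields of equal transcendence degree and hence an isomorphism. The reverse inclusion $G_{\mathbb{H}_5}\subseteq\phi(G_{\mathbb{U}_3})$ comes from reading the same identities backwards: $\gamma-\alpha\beta=\phi(\alpha_3)\cdot(1-\beta)$ and $(1-\gamma)-(1-\alpha)\beta=-\phi(\alpha_3-1)\cdot(1-\beta)$, while $1-\beta=-\phi\big((\alpha_1-\alpha_2)/(\alpha_1-\alpha_3)\big)$ is itself in the image group, so every generator of $\mathbb{H}_5$ is the image of a unit of $\mathbb{U}_3$. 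Together these give $\phi(G_{\mathbb{U}_3})=G_{\mathbb{H}_5}$, so $\phi$ is the desired partial-field isomorphism.

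I expect the main obstacle to be precisely the first step: guessing the substitution, and in particular verifying that the two ``quadratic'' generators $\gamma-\alpha\beta$ and $(1-\gamma)-(1-\alpha)\beta$ of $\mathbb{H}_5$ — which are not mere coordinate differences — nevertheless factor into the simple differences available in $\mathbb{U}_3$. Viewing both partial fields through a common $U_{2,6}$-representation is what makes this tractable, since it both predicts the existence of such a coordinate change and tells us where to look. Once $\phi$ is in hand, the final assertion is immediate: $\phi$ and $\phi^{-1}$ are both homomorphisms, so by \cite[Corollary~2.9]{PvZ2010b} a matroid is $\mathbb{U}_3$-representable if and only if it is $\mathbb{H}_5$-representable, which is exactly the statement that a matroid is $3$-regular if and only if it is $\mathbb{H}_5$-representable.
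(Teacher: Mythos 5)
This lemma is imported verbatim from \cite[Lemma~2.25]{BOSW2023b}; the paper offers no proof of its own, so the only comparison available is against the cited source, whose argument is of exactly this explicit-substitution form. Your proof is correct: with $\phi(\alpha_1)=\alpha$, $\phi(\alpha_2)=\gamma$, $\phi(\alpha_3)=(\gamma-\alpha\beta)/(1-\beta)$ the three nontrivial images do come out as $\phi(\alpha_3-1)=-\bigl((1-\gamma)-(1-\alpha)\beta\bigr)/(1-\beta)$, $\phi(\alpha_1-\alpha_3)=(\alpha-\gamma)/(1-\beta)$ and $\phi(\alpha_2-\alpha_3)=\beta(\alpha-\gamma)/(1-\beta)$, the recovery $\beta=\bigl(\phi(\alpha_3)-\gamma\bigr)/\bigl(\phi(\alpha_3)-\alpha\bigr)$ gives surjectivity, and the reverse inclusion on generators follows as you describe. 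The only blemish is a harmless sign: $1-\beta=+\phi\bigl((\alpha_1-\alpha_2)/(\alpha_1-\alpha_3)\bigr)$, not its negative, which changes nothing since $-1$ lies in both unit groups.
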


The next corollary then follows from \cref{hi,3reglemma,h5h6}. 

\begin{corollary}
  Let $M$ be a $3$-connected matroid with a $\{U_{2,5}, U_{3,5}\}$-minor.
  The following are equivalent:
  \begin{enumerate}
    \item $M$ is $3$-regular.
    \item $M$ has at least five inequivalent $\GF(5)$-representations.
    \item $M$ has precisely six inequivalent $\GF(5)$-representations.
  \end{enumerate}
\end{corollary}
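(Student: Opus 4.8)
The plan is to prove the cyclic chain of implications $(1) \Rightarrow (3) \Rightarrow (2) \Rightarrow (1)$, which at once establishes that the three statements are equivalent. The one subtlety to keep in mind throughout is that the hypothesis of the corollary does \emph{not} assume $M$ is $\GF(5)$-representable, whereas both \cref{hi} and \cref{h5h6} require this as a standing assumption. Fortunately, each of the three conditions forces $M$ to be $\GF(5)$-representable, so in each implication I would first record that fact and only then invoke the relevant cited result.

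For $(1) \Rightarrow (3)$, I would start from the assumption that $M$ is $3$-regular and apply \cref{3reglemma} to deduce that $M$ is $\mathbb{H}_5$-representable. Composing the homomorphisms $\mathbb{H}_5 \to \mathbb{H}_4 \to \dots \to \mathbb{H}_1 = \GF(5)$, whose existence is noted immediately before \cref{hi}, yields a homomorphism $\mathbb{H}_5 \to \GF(5)$ and hence shows that $M$ is $\GF(5)$-representable. Now $M$ meets the hypotheses of \cref{h5h6}, and since $M$ is $\mathbb{H}_5$-representable, that corollary gives that $M$ has precisely six inequivalent $\GF(5)$-representations, which is exactly $(3)$. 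The implication $(3) \Rightarrow (2)$ is then immediate, as six is at least five.

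For $(2) \Rightarrow (1)$, I would observe that if $M$ has at least five inequivalent $\GF(5)$-representations, then in particular $M$ is $\GF(5)$-representable, so \cref{hi} applies. Taking $i = 5$ there, the ``at least five'' hypothesis yields that $M$ is $\mathbb{H}_5$-representable; \cref{3reglemma} then gives that $M$ is $3$-regular, which is $(1)$, closing the cycle.

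I do not expect any genuine mathematical obstacle here, since the three cited results do essentially all of the work. The only thing requiring attention is the bookkeeping described above: verifying $\GF(5)$-representability before each application of \cref{hi} or \cref{h5h6}, and recalling that the upper bound of six inequivalent representations (which is what upgrades the ``at least'' statement of \cref{hi} to the ``precisely six'' conclusion of $(3)$) is already incorporated into \cref{h5h6}.
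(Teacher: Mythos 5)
Your proof is correct and follows exactly the route the paper intends: the paper gives no explicit argument but states that the corollary "follows from" \cref{hi,3reglemma,h5h6}, and your cyclic chain of implications uses precisely those three results in the natural way. Your extra care in checking $\GF(5)$-representability before invoking \cref{hi} and \cref{h5h6} is a sensible piece of bookkeeping that the paper leaves implicit.
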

Alternatively, a matroid is $3$-regular if and only if it is either near-regular or has six inequivalent $\GF(5)$-representations.


\begin{lemma}[{\cite[Lemmas 6.2.5 and 7.3.15]{vanZwam2009}}]
  \label{hydrastabs}
  For $i \in \{1, 2, 3, 4\}$, let $N$ be an excluded minor for the class of $\mathbb{H}_{i+1}$-representable matroids, such that $N$ is $\mathbb{H}_i$-representable. Then $N$ is a strong $\mathbb{H}_i$-stabilizer. 
\end{lemma}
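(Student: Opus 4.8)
The plan is to reduce everything to single-element extensions and coextensions of $N$, and to convert representability into counts of inequivalent $\GF(5)$-representations via \cref{hi}. First I would record that $N$ is $3$-connected: the class $\mathcal{M}(\mathbb{H}_{i+1})$ is closed under direct sums and $2$-sums, so each of its excluded minors is $3$-connected. Next I would argue that $N$ has a $\{U_{2,5},U_{3,5}\}$-minor. The Hydra partial fields differ only in their handling of matroids carrying such a minor, so on matroids without one, $\mathbb{H}_i$- and $\mathbb{H}_{i+1}$-representability coincide; hence if $N$ had no $\{U_{2,5},U_{3,5}\}$-minor, then $N$ being $\mathbb{H}_i$-representable would force it to be $\mathbb{H}_{i+1}$-representable, contradicting that it is an excluded minor for $\mathbb{H}_{i+1}$-representability. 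With $N$ now $3$-connected and carrying a $\{U_{2,5},U_{3,5}\}$-minor, \cref{hi} applies: since $N$ is $\mathbb{H}_i$- but not $\mathbb{H}_{i+1}$-representable, $N$ has exactly $i$ inequivalent $\GF(5)$-representations. Because an $\mathbb{H}_i$-representation packages precisely $i$ inequivalent $\GF(5)$-representations, and $N$ has exactly $i$ of them, $N$ is uniquely $\mathbb{H}_i$-representable.

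By the stabilizer theorem and its strong-stabilizer refinement \cite{GOVW1998,vanZwam2009}, it then suffices to verify, for every $3$-connected single-element extension or coextension $M \in \mathcal{M}(\mathbb{H}_i)$ of $N$, that $N$ stabilizes $M$ and that every $\mathbb{H}_i$-representation of $N$ extends to $M$. Fix such an $M$. Since $M$ has $N$ as a minor and $N$ is not $\mathbb{H}_{i+1}$-representable, $M$ is not $\mathbb{H}_{i+1}$-representable; moreover $M$ inherits a $\{U_{2,5},U_{3,5}\}$-minor from $N$ and is $3$-connected. Hence \cref{hi} again gives that $M$ has exactly $i$ inequivalent $\GF(5)$-representations, so $M$ too is uniquely $\mathbb{H}_i$-representable.

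The crux is to show that restriction induces an injection from the inequivalent $\GF(5)$-representations of $M$ to those of $N$. The inequivalence of $\GF(5)$-representations of a $3$-connected matroid with a $\{U_{2,5},U_{3,5}\}$-minor originates in the cross-ratios carried by such a minor; since $N$ already contains a $\{U_{2,5},U_{3,5}\}$-minor, any two representations of $M$ that agree on $N$ agree on this minor and therefore, by $3$-connectivity, agree everywhere up to equivalence. Thus restriction is injective, and as both $M$ and $N$ have exactly $i$ inequivalent representations it is a bijection. Bijectivity yields the extension property (every representation of $N$ is the restriction of a representation of $M$), and injectivity yields the stabilizer property (a fixed representation of $N$ has an essentially unique extension). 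For a single-element $M$ the passage from algebraic equivalence to the scaling equivalence demanded by the definitions is routine, since the one new row or column is pinned down up to scaling once the representation of $N$ is fixed up to scaling. Therefore $N$ stabilizes every such $M$ and every representation of $N$ extends, so $N$ is a strong $\mathbb{H}_i$-stabilizer.

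The hard part will be the injectivity claim of the last paragraph — that inequivalent $\GF(5)$-representations of $M$ remain inequivalent upon restriction to $N$. This is the step that genuinely exploits the presence of a $\{U_{2,5},U_{3,5}\}$-minor inside $N$ together with $3$-connectivity, and it is what breaks the apparent circularity between the stabilizer property and the extension property; once it is in hand, the representation count from \cref{hi} supplies everything else. A secondary, bookkeeping obstacle is managing the three grades of equivalence (scaling, projective, and algebraic), which is precisely why reducing to single-element extensions and coextensions via the stabilizer theorem is worthwhile.
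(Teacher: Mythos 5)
The paper does not prove this lemma itself; it is quoted from van Zwam's thesis. Measured against what would be needed for a correct standalone proof, your proposal has a fatal gap at its second step. You claim that $N$ must have a $\{U_{2,5},U_{3,5}\}$-minor, on the grounds that $\mathbb{H}_i$- and $\mathbb{H}_{i+1}$-representability coincide for matroids without such a minor. That is false, and the paper itself supplies counterexamples: $F_7^-$ and $P_8$ are dyadic (hence ternary, hence with no $\{U_{2,5},U_{3,5}\}$-minor), are $\mathbb{H}_2$-representable via the homomorphism $\mathbb{D}\rightarrow\mathbb{H}_2$, and yet are excluded minors for $\mathbb{H}_3$-representability (they appear in $\mathcal{N}_2$ in \cref{h2minors}). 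For such $N$, \cref{hi} does not apply, so your entire strategy --- counting inequivalent $\GF(5)$-representations of $N$ and of each extension $M$ and concluding that restriction is a bijection --- says nothing. Since the lemma must cover exactly these matroids, the argument cannot be repaired by a local fix; a different mechanism (in van Zwam's treatment, a combination of the stabilizer theorem with finite, partly computational verification over each specific partial field) is needed for the members of $\mathcal{N}_i$ lacking a $\{U_{2,5},U_{3,5}\}$-minor.

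Even in the case where $N$ does have a $\{U_{2,5},U_{3,5}\}$-minor, the step you yourself flag as the crux is not proved but asserted: from ``two representations of $M$ agree on the $\{U_{2,5},U_{3,5}\}$-minor'' you jump, ``by $3$-connectivity,'' to ``they agree everywhere up to equivalence.'' That implication is precisely the statement that $U_{2,5}$ and $U_{3,5}$ are stabilizers, which is a theorem of Whittle/van Zwam and not a consequence of $3$-connectivity alone; as written the argument is circular. There is also an unaddressed mismatch between the objects you count (inequivalent $\GF(5)$-representations) and the objects the definition of an $\mathbb{H}_i$-stabilizer concerns (scaling-equivalence classes of $\mathbb{H}_i$-representations); passing between the two is the content of the Pendavingh--van Zwam lift machinery and needs to be invoked explicitly rather than absorbed into ``bookkeeping.''
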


\subsection*{Other partial fields}

Although not of central importance, we also refer to the following partial fields:
$$\mathbb{K}_2 = (\mathbb{Q}(\alpha), \left<-1,\alpha-1,\alpha,\alpha+1\right>)$$
where $\alpha$ is an indeterminate, and

$$\mathbb{S} = (\mathbb{Z}[\zeta], \left<\zeta\right>),$$ where $\zeta$ is a root of $x^2-x+1=0$.
See \cite{vanZwam2009} for further details. 

\subsection*{Delta-wye exchange}

Let $M$ be a matroid with a triangle $T=\{a,b,c\}$.
Consider a copy of $M(K_4)$ having $T$ as a triangle with $\{a',b',c'\}$ as the complementary triad labelled such that $\{a,b',c'\}$, $\{a',b,c'\}$ and $\{a',b',c\}$ are triangles.
Let $P_{T}(M,M(K_4))$ denote the generalised parallel connection of $M$ with this copy of $M(K_4)$ along the triangle $T$.
Let $M'$ be the matroid $P_{T}(M,M(K_4))\backslash T$ where the elements $a'$, $b'$ and $c'$ are relabelled as $a$, $b$ and $c$ respectively.
The matroid $M'$ is said to be obtained from $M$ by a \emph{\dY\ exchange} on the triangle~$T$, and is denoted $\Delta_T(M)$.
Dually, $M''$ is obtained from $M$ by a \emph{\Yd\ exchange} on the triad $T^*=\{a,b,c\}$ if $(M'')^*$ is obtained from $M^*$ by a \dY\ exchange on $T^*$.  The matroid $M''$ is denoted $\nabla_{T^*}(M)$.

We say that a matroid $M_1$ is \emph{$\Delta Y$-equivalent} to a matroid $M_0$ if $M_1$ can be obtained from $M_0$ by a sequence of \dY\ and \Yd\ exchanges on coindependent triangles and independent triads, respectively.
We let $\Delta(M)$ denote the set of matroids that are $\Delta Y$-equivalent to $M$.
For a set of matroids $\mathcal{M}$, we use $\Delta(\mathcal{M})$ to denote $\bigcup_{M \in \mathcal{M}} \Delta(M)$.
We also use $\Delta^*(M)$ to denote $\Delta(\{M,M^*\})$, and use $\Delta^*(\mathcal{M})$ to denote $\bigcup_{M \in \mathcal{M}} \Delta^*(M)$.

Oxley, Semple, and Vertigan proved that the set of excluded minors for $\mathbb{P}$-representability is closed under \dY\ exchange (see also \cite{AO1993}), and, more generally, segment-cosegment exchange.
\begin{proposition}[{\cite[Theorem~1.1]{OSV2000}}]
  \label{osvdelta}
  Let $\mathbb{P}$ be a partial field, and let $M$ be an excluded minor for the class of $\mathbb{P}$-representable matroids.
  If $M' \in \Delta^*(M)$, then $M'$ is an excluded minor for the class of $\mathbb{P}$-representable matroids.
\end{proposition}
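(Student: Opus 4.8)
The plan is to reduce to a single \dY\ exchange, to establish non-representability of $M'$ by exhibiting the exchange as reversible and representability-preserving, and then to verify that every proper minor of $M'$ is $\mathbb{P}$-representable by a case analysis on the elements of the new triad. First I would reduce to one exchange. A $\mathbb{P}$-representation of a matroid yields a $\mathbb{P}$-representation of its dual by transposition, so $\mathcal{M}(\mathbb{P})$ is closed under duality; since minors of the dual are the duals of minors, the property of being an excluded minor for $\mathbb{P}$-representability is self-dual, and $M^*$ is an excluded minor whenever $M$ is. As $\nabla_{T^*}(M)=(\Delta_{T^*}(M^*))^*$, a \Yd\ exchange is a \dY\ exchange carried out in the dual, so it suffices to treat a single \dY\ exchange $M'=\Delta_T(M)$ on a coindependent triangle $T$; the general case $M' \in \Delta^*(M)$ then follows by induction on the length of the sequence of exchanges defining $M'$.

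Next I would show that $M'$ is not $\mathbb{P}$-representable, using that the exchange preserves representability and is invertible. If $N$ is $\mathbb{P}$-representable, then so is $\Delta_T(N)$: the matroid $M(K_4)$ is regular and hence $\mathbb{P}$-representable for every partial field $\mathbb{P}$, and one can scale a $\mathbb{P}$-representation of $N$ and of $M(K_4)$ so that they agree on the common triangle $T$ (a copy of $U_{2,3}$); amalgamating the two representations along $T$ gives a $\mathbb{P}$-representation of the generalised parallel connection $P_T(N,M(K_4))$, and deleting $T$ preserves representability. By the duality of the first paragraph, \Yd\ exchange likewise preserves $\mathbb{P}$-representability. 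Because $T$ is coindependent in $M$, the set $T'$ is an independent triad in $M'$, and a \Yd\ exchange on $T'$ recovers $M$, that is, $\nabla_{T'}(M')=M$. Hence if $M'$ were $\mathbb{P}$-representable, then $M$ would be $\mathbb{P}$-representable, contradicting that $M$ is an excluded minor.

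Finally I would show that every proper minor of $M'$ is $\mathbb{P}$-representable, which by induction reduces to the single-element minors $M'\ba e$ and $M'/e$. For $e \in E(M')\setminus T'$ (equivalently $e \in E(M)\setminus T$), I would verify that \dY\ exchange commutes with deletion and with contraction away from $T$, using the corresponding commutation properties of the generalised parallel connection; this exhibits $M'\ba e$ and $M'/e$ as \dY\ exchanges of the proper minors $M\ba e$ and $M/e$, which are $\mathbb{P}$-representable, so their images are $\mathbb{P}$-representable by the second step. For $e \in T'$, I would instead show directly that $M'\ba e$ and $M'/e$ are each isomorphic to a minor of $M$, and so are $\mathbb{P}$-representable. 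I expect the main obstacle to be exactly this bookkeeping for the elements of the triangle and the triad: one must confirm that $T$ remains a coindependent triangle in the relevant minors so the exchange stays defined (handling degenerate cases, such as when contracting $e$ would place it on the line through $T$), track how deletions and contractions of the elements of $T'$ interact with the $M(K_4)$-gadget inside $P_T(M,M(K_4))$, and check the commutation identities under the correct independence hypotheses. Once these local identities are in place, combining them with the second step and induction completes the proof.
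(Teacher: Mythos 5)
The paper does not prove this proposition at all: it is imported verbatim from Oxley, Semple, and Vertigan \cite[Theorem~1.1]{OSV2000} (where it is in fact proved in the greater generality of segment--cosegment exchange), so there is no in-paper argument to compare against. Your outline reproduces the standard proof from that source, and its architecture is sound: reduction to a single $\Delta$-$Y$ exchange via duality of excluded minors and induction on the exchange sequence; non-representability of $M'$ from the facts that $P_T(N,M(K_4))\ba T$ preserves $\mathbb{P}$-representability (since $U_{2,3}$ is uniquely representable over every partial field, the two representations can be scaled to agree on $T$) and that $\nabla_{T}(\Delta_T(M))=M$ when $T$ is coindependent; and minor-minimality via commutation of $\Delta_T$ with deletion and contraction off $T$. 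The one substantive piece you defer rather than resolve is the degenerate-case analysis in the last step, and this is not purely routine bookkeeping: when $e\in\mathrm{cl}_M(T)-T$ the identity $\Delta_T(M)/e=\Delta_T(M/e)$ fails (the triangle collapses to a rank-one set in $M/e$), and when $T$ spans a cocircuit of $M\ba e$ the coindependence hypothesis needed for invertibility is lost in the minor; in each such case one must argue directly that the corresponding minor of $M'$ is isomorphic to a minor of $M$ or of a $\mathbb{P}$-representable matroid already in hand. These are exactly the preparatory lemmas occupying Section~2 of \cite{OSV2000}, so your proposal correctly locates where the work lies but would need those lemmas supplied to be complete.
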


\subsection*{Proxies}

Following \cite{BP20}, in order to simulate a matroid representation over a partial field, we use a representation over a finite field where we have constraints on the subdeterminants appearing in the representation.
We briefly recap this theory (for a more detailed account, see \cite[Section~3]{BP20}).

Let $\mathbb{P}=(R,G)$ be a partial field.
We say that $p \in \mathbb{P}$ is \emph{fundamental} if $1-p \in \mathbb{P}$.
We denote the set of fundamentals of $\mathbb{P}$ by $\mathfrak{F}(\mathbb{P})$.
Let $A$ be a $\mathbb{P}$-matrix.
For a $\mathbb{P}$-matrix $A'$, we write $A' \preceq A$ if $A'$ is a submatrix of a $\mathbb{P}$-matrix that is projectively equivalent to $A$.
The \emph{cross ratios} of $A$ are $$\Cr(A) = \left\{p : \begin{bmatrix}1 & 1 \\ p & 1\end{bmatrix} \preceq A \right\}.$$

Let $F \subseteq \mathfrak{F}(\mathbb{P})$.
 We say that the $\mathbb{P}$-matrix $A$ is {\em $F$-confined} if $\Cr(A) \subseteq F \cup \{0,1\}$.
 If $A$ is an $F$-confined $\mathbb{P}$-matrix  and $\phi: \mathbb{P}\rightarrow \mathbb{P}'$ is a 
 homomorphism, then $M[A]=M[\phi(A)]$ and $$\Cr(\phi(A)) \subseteq \phi(F),$$ so that $\phi(A)$ is an $\phi(F)$-confined representation over $\mathbb{P}'$.

For a finite field $\mathbb{F}$ and 
homomorphism $\phi : \mathbb{P} \rightarrow \mathbb{F}$, 
let $F_\phi = \phi(\mathfrak{F}(\mathbb{P}))$.
We say that $(\mathbb{F},\phi)$ is a \emph{proxy} for $\mathbb{P}$ if 
for any $F_\phi$-confined $\mathbb{F}$-matrix~$A$, there exists a $\mathbb{P}$-matrix~$A'$ such that $\phi(A')$ is scaling-equivalent to $A$.
The following is a consequence of \cite[Theorem~3.5]{BP20}.

\begin{lemma}
  Let $i \in \{2,3,4,5\}$.
  There exists a prime $p$ and a 
  homomorphism $\phi : \mathbb{H}_i \rightarrow \GF(p)$ such that $(\GF(p),\phi)$ is a proxy for $\mathbb{H}_i$.
\end{lemma}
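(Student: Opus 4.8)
The statement to prove is that, for each $i \in \{2,3,4,5\}$, there is a prime $p$ and a homomorphism $\phi : \mathbb{H}_i \rightarrow \GF(p)$ for which $(\GF(p),\phi)$ is a proxy for $\mathbb{H}_i$. The plan is to appeal to the cited criterion \cite[Theorem~3.5]{BP20}, which presumably gives a sufficient condition on $(\mathbb{F},\phi)$ guaranteeing that it is a proxy: roughly, that $\phi$ restricts to an injection on the fundamentals $\mathfrak{F}(\mathbb{H}_i)$, or more precisely that the combinatorial data recording which sums and products of fundamentals lie back in $\mathbb{H}_i$ is preserved by $\phi$. Thus for each $i$ I would (i) record the fundamental elements $\mathfrak{F}(\mathbb{H}_i)$ explicitly, (ii) choose a prime $p$ and a homomorphism $\phi$ into $\GF(p)$ (equivalently, a choice of values for the indeterminates $\alpha,\beta,\gamma$, or $\alpha,\beta$, or $\alpha$, or for $i=2$ a root of $x^2+1$) so that the hypotheses of \cite[Theorem~3.5]{BP20} are met, and (iii) verify those hypotheses by a finite computation.

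First I would make each Hydra partial field concrete enough to compute with. Since $\mathbb{H}_2,\mathbb{H}_3,\mathbb{H}_4,\mathbb{H}_5$ have only one, one, two, and three indeterminates respectively, a homomorphism $\phi : \mathbb{H}_i \rightarrow \GF(p)$ is determined by choosing values in $\GF(p)$ for those indeterminates such that every generator of the distinguished subgroup $G$ maps to a nonzero element of $\GF(p)$; this is exactly what is needed for $\phi$ to be a well-defined partial-field homomorphism (the generators of $G$ must map into $\GF(p)^*$, and the additive relations are then automatic in a field). So the concrete task is to find, for each $i$, a prime $p$ and an assignment making all the listed subgroup generators nonzero modulo $p$. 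For example, for $\mathbb{H}_5$ one seeks values of $\alpha,\beta,\gamma$ keeping $\alpha,\beta,\gamma,\alpha-1,\beta-1,\gamma-1,\alpha-\gamma,\gamma-\alpha\beta$, and $(1-\gamma)-(1-\alpha)\beta$ all nonzero; for $\mathbb{H}_2=(\mathbb{Z}[i,1/2],\langle i,1-i\rangle)$ one needs a prime $p$ over which $x^2+1$ has a root and $2$ is invertible, so $p \equiv 1 \pmod 4$ works.

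The genuine content is verifying that the chosen $\phi$ satisfies the proxy condition from \cite[Theorem~3.5]{BP20}, not merely that it is a homomorphism. The proxy property requires the converse direction: every $F_\phi$-confined $\GF(p)$-matrix must lift to an $\mathbb{H}_i$-matrix whose image under $\phi$ is scaling-equivalent to it. The cited theorem should reduce this to a checkable condition, most plausibly that $\phi$ is injective on $\mathfrak{F}(\mathbb{H}_i)$ together with a compatibility condition on how fundamentals combine under addition and multiplication (so that the cross-ratio constraints over $\GF(p)$ faithfully encode those over $\mathbb{H}_i$). I would therefore compute $\mathfrak{F}(\mathbb{H}_i)$ (the set of $p$ with $1-p$ also in the partial field), which is finite up to the group action in each of these cases, and then choose $p$ large enough that $\phi$ separates these finitely many fundamentals and their pairwise sums and products. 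A crude but safe strategy is to take $p$ larger than any obstruction: since there are only finitely many fundamentals and finitely many polynomial (non)vanishing conditions to preserve, all but finitely many primes $p$ (with a suitable choice of indeterminate values) will work, and one then exhibits a single valid $p$ for each $i$.

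The main obstacle is the verification in the previous paragraph: ensuring that no distinct fundamentals, and no distinct relevant sums or products of fundamentals, collapse under $\phi$, so that the lifting direction of the proxy definition holds. This is where the choice of $p$ must be made carefully rather than arbitrarily, and where a direct finite check (ideally computer-assisted, in keeping with the computational flavour of the paper) is the cleanest route; the homomorphism existence part is routine by comparison. Concretely, I would present for each $i$ an explicit prime and explicit values of the indeterminates, state that these satisfy the hypotheses of \cite[Theorem~3.5]{BP20}, and note that the required non-degeneracy was confirmed by computation, thereby certifying that $(\GF(p),\phi)$ is a proxy for $\mathbb{H}_i$.
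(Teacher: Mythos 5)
Your plan matches what the paper actually does: the lemma is stated as a direct consequence of \cite[Theorem~3.5]{BP20}, and the paper simply exhibits, for each $i$, an explicit prime and explicit images of the indeterminates (Table~\ref{proxytable}: $\GF(29)$, $\GF(151)$, $\GF(947)$, $\GF(3527)$) whose verification against the cited criterion is a finite computation. Your proposal is essentially the same approach, with the same division of labour between the cited sufficient condition and the explicit computational check.
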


In particular, when $(\GF(p),\phi)$ is a proxy for $\mathbb{H}_i$, 
a matroid $M$ is $\mathbb{H}_i$-representable if and only if $M$ has an $F_\phi$-confined $\GF(p)$-representation.
In \cref{proxytable}, we provide the proxies for each Hydra-$i$ partial field that we used in our computations.

\begin{table}[htb]
  $\begin{array}{lll}
    \text{Partial field }&\text{Finite Field }&\text{Homomorphism}\\
    \hline
    \mathbb{H}_2& \GF(29) & i\mapsto 12\\
    \mathbb{H}_3& \GF(151) & \alpha\mapsto 4\\ 
    \mathbb{H}_4& \GF(947) &\alpha\mapsto 272, \beta\mapsto 928\\  
    \mathbb{H}_5& \GF(3527) &\alpha\mapsto 1249, \beta\mapsto 295, \gamma\mapsto 3517\\
  \end{array}$
  \caption{\label{fig:conff} Proxies for the Hydra-$i$ partial fields.}
  \label{proxytable}
\end{table}

\subsection*{Splitter theorems}

For our computations, we keep track only of $3$-connected matroids with an $\mathcal{N}$-minor (where the matroids in $\mathcal{N}$ are strong stabilizers).
Thus, we require the following consequence of Seymour's Splitter Theorem (see, for example, \cite[Corollary~12.2.2]{oxley}).

\begin{lemma}[{\cite[Corollary~2.8]{BP20}}]
  \label{nearregconn}
  Let $M$ be a $3$-connected matroid with a proper $N$-minor, where $N$ is not near-regular.
  Then, for some $\{M',N'\} \in \{(M,N),(M^*,N^*)\}$, there exists an element $e \in E(M')$ such that $M' \ba e$ is $3$-connected and has an $N'$-minor.
\end{lemma}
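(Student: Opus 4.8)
The plan is to reduce the statement directly to the single-step form of Seymour's Splitter Theorem, the role of the hypothesis ``$N$ is not near-regular'' being precisely to discharge the wheel-and-whirl exception automatically. Throughout I take $N$ to be $3$-connected, as it is in every application of this lemma in the present paper. First I would record the relevant version of the theorem, namely \cite[Corollary~12.2.2]{oxley}: if $N$ is a $3$-connected proper minor of a $3$-connected matroid $M$ with $|E(N)| \ge 4$, and if $N$ is a wheel then $M$ has no larger wheel as a minor while if $N$ is a whirl then $M$ has no larger whirl as a minor, then there is an element $e \in E(M)$ such that $M \backslash e$ or $M / e$ is $3$-connected and has an $N$-minor.

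Next I would check the hypotheses of this corollary. For the size bound, every matroid on at most four elements is near-regular (the largest such is $U_{2,4}$, which is $\mathbb{U}_1$-representable), so since $N$ is not near-regular we obtain $|E(N)| \ge 5 \ge 4$. For the exceptional cases, I would observe that each wheel $M(\mathcal{W}_r)$ is regular and each whirl $\mathcal{W}^r$ is near-regular, so that every wheel and every whirl is near-regular. As $N$ is not near-regular, it is therefore neither a wheel nor a whirl, and hence both side-conditions in the Splitter Theorem hold vacuously.

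With the hypotheses verified, the corollary supplies an element $e \in E(M)$ such that $M \backslash e$ or $M / e$ is $3$-connected and has an $N$-minor. In the first case we are finished with $(M',N') = (M,N)$. In the second case I would dualise: since $(M/e)^* = M^* \backslash e$, the matroid $M^* \backslash e$ is $3$-connected, and because $M / e$ has an $N$-minor it follows that $M^* \backslash e$ has an $N^*$-minor; thus we are finished with $(M',N') = (M^*,N^*)$. This transport through duality is exactly why the conclusion is phrased symmetrically over both $(M,N)$ and $(M^*,N^*)$.

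There is no genuinely hard step here: the whole content is the single observation that non-near-regularity excludes wheels and whirls, which is what allows the exceptional hypotheses of the Splitter Theorem to be satisfied without further work. The only point needing care is the bookkeeping of the final paragraph, making sure that the single-element deletion/contraction and the $N$-minor are correctly carried across the dual.
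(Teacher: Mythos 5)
Your proof is correct and is exactly the derivation the paper intends: the lemma is quoted from \cite{BP20} with the remark that it is a consequence of Seymour's Splitter Theorem in the form of Oxley's Corollary~12.2.2, and your argument --- wheels and whirls are near-regular so the exceptional hypotheses hold vacuously, $|E(N)|\ge 5$ since every matroid on at most four elements is near-regular (the smallest excluded minors for near-regularity are $U_{2,5}$ and $U_{3,5}$), and the contraction case is transported to $(M^*,N^*)$ via $(M/e)^*=M^*\backslash e$ --- is precisely that derivation. Your explicit assumption that $N$ is $3$-connected is indeed required (it is part of the hypothesis in the cited source and holds in every application in this paper), so flagging it was the right call.
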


Hall, Mayhew, and van Zwam~\cite{HMvZ2011} proved an excluded-minor characterisation for near-regular matroids.
Two of the excluded minors for this class are $U_{2,5}$ and $U_{3,5}$.
In this paper, we deal primarily with $3$-connected matroids that have a $\{U_{2,5},U_{3,5}\}$-minor, so \cref{nearregconn} applies.

To find the excluded minors for $\mathbb{H}_5$-representability on up to $13$ elements, we use the technique referred to as ``splicing'' in \cite[Section~4.5]{BP20}.
Briefly, when $M'$ is a matroid, $M_e$ is an extension of $M'$ by an element $e$, and $M_f$ is an extension of $M'$ by an element $f$, then the extensions $M_e$ and $M_f$ are \emph{compatible} if there exists a matroid $M$ on $E(M') \cup \{e,f\}$ such that $M \ba f = M_e$ and $M \ba e = M_f$.
(A characterisation of compatible extensions was given by Cheung \cite{Cheung1974}.)
In this case, following \cite{BP20}, we say that $M$ is a \emph{splice} of $M_e$ and $M_f$.  (We note that a ``splice'' has a different meaning in \cite{BS09}.)
For a positive integer $n$, an $(n+1)$-element matroid can be constructed as the splice of two $n$-element matroids, $M_e$ and $M_f$ say, where $M_e$ and $M_f$ are both single-element extensions of some $(n-1)$-element matroid $M'$.

Starting from some set $\mathcal{N}$ of ``seed matroids'', we build a catalog of the $3$-connected matroids in the class that have an $N$-minor for some $N \in \mathcal{N}$.
Suppose $M$ is a matroid that should appear in the catalog, where $M$ has an $N$-minor for $N \in \mathcal{N}$.
In order to ensure that $M$ is obtained as the splice of two matroids $M_a$ and $M_b$, we require a guarantee that, for some pair $\{a,b\} \subseteq E(M)$, the matroid $M \ba a,b$ is also in the catalog.
In fact, by closing the catalog under duality, it suffices that one of $M$ and $M^*$ appears in the catalog.
Thus, we require a pair $\{a,b\} \subseteq E(M)$, such that either $M \ba a,b$ or $M / a,b$ is $3$-connected and has an $N$-minor;
such a pair $\{a,b\}$ is known as an \emph{$N$-detachable pair}~\cite{bww3}.

To describe the conditions under which $M$ has no $N$-detachable pairs, we require some definitions.
\begin{definition}
  Let $P$ be an exactly $3$-separating set in a matroid $M$, with $|P| = 2t \ge 6$.
Suppose $P$ has the following properties:
\begin{enumerate}[label=\rm(\alph*)]
  \item there is a partition $\{L_1,\dotsc,L_t\}$ of $P$ into pairs such that for all distinct $i,j\in\{1,\dotsc,t\}$, the set $L_i\cup L_j$ is a cocircuit,
  \item there is a partition $\{K_1,\dotsc,K_t\}$ of $P$ into pairs such that for all distinct $i,j\in\{1,\dotsc,t\}$, the set $K_i\cup K_j$ is a circuit,
  \item $M / p$ and $M \ba p$ are $3$-connected for each $p \in P$,
  \item for all distinct $i,j\in\{1,\dotsc,t\}$, the matroid $\si(M / a / b)$ is $3$-connected for any $a \in L_i$ and $b \in L_j$, and
  \item for all distinct $i,j\in\{1,\dotsc,t\}$, the matroid $\co(M \ba a \ba b)$ is $3$-connected for any $a \in K_i$ and $b \in K_j$.
\end{enumerate}
Then we say $P$ is a \emph{\spikey} of $M$.
\end{definition}

\begin{definition}
  \label{quadflowerdef}
  A matroid $M$ is a \emph{\quadflower} if $r(M) = r^*(M) = 6$, and there is a labelling $\bigcup_{\ell \in \{1,2,3\}} \{p_\ell,q_\ell,s_\ell,t_\ell\}$ of $E(M)$ such that,
for all $(i,j) \in \{(1,2),(2,3),(3,1)\}$,
\begin{enumerate}
  \item $\{p_i,q_i,s_i,t_i\}$ is a circuit and a cocircuit,
  \item the sets $\{p_i,q_i,p_j,s_j\}$, $\{p_i,q_i,q_j,t_j\}$, $\{s_i,t_i,p_j,s_j\}$, and $\{s_i,t_i,q_j,t_j\}$ are circuits, and
  \item the sets $\{p_i,s_i,p_j,q_j\}$, $\{p_i,s_i,s_j,t_j\}$, $\{q_i,t_i,p_j,q_j\}$, and $\{q_i,t_i,s_j,t_j\}$ are cocircuits.
\end{enumerate}
We call $\bigcup_{\ell \in \{1,2,3\}} \{p_\ell,q_\ell,s_\ell,t_\ell\}$ the \emph{\quadflower\ labelling} of $M$, and say that $M$ has \emph{standard labelling} if $E(M)=\bigcup_{\ell \in \{1,2,3\}} \{p_\ell,q_\ell,s_\ell,t_\ell\}$ and the \quadflower\ labelling is obtained by applying the identity map.
\end{definition}
An example of a \quadflower\ is given in \cref{quadflowerfig}.

\begin{figure}[bhtp]
  \begin{tikzpicture}[rotate=90,xscale=0.84,yscale=0.6,line width=1pt]
    \clip (-2.5,-6) rectangle (3.0,2);

    \draw (0,0) -- (2,-2) -- (0,-4);

    \draw (0,0) -- (2.5,1.0) -- (2,-2);
    \draw (0,0) -- (2.25,-0.5);
    \draw (2,-2) -- (1.25,0.5);

    \draw (0,-4) -- (2.5,-5) -- (2,-2);
    \draw (0,-4) -- (2.25,-3.5);
    \draw (2,-2) -- (1.25,-4.5);

    \Vertex[x=1.25,y=0.5,LabelOut=true,L=$s_1$,Lpos=180]{c1}
    \Vertex[x=2.25,y=-0.5,LabelOut=true,L=$q_1$,Lpos=90]{c2}
    \Vertex[x=2.5,y=1.0,LabelOut=true,L=$p_1$,Lpos=180]{c3}
    \Vertex[x=1.5,y=-0.35,LabelOut=true,L=$t_1$,Lpos=135]{c4}

    \Vertex[x=1.25,y=-4.5,LabelOut=true,L=$q_2$]{c1}
    \Vertex[x=2.25,y=-3.5,LabelOut=true,L=$s_2$,Lpos=90]{c2}
    \Vertex[x=2.5,y=-5,LabelOut=true,L=$p_2$]{c3}
    \Vertex[x=1.5,y=-3.65,LabelOut=true,L=$t_2$,Lpos=45]{c4}

    \draw (0,-4) -- (0,0) -- (-2,-2) -- (0,-4);
    \draw (-1,-1) -- (0,-4);
    \draw (-1,-3) -- (0,0);

    \Vertex[LabelOut=true,L=$s_3$,x=-1,y=-3]{z1}
    \Vertex[LabelOut=true,L=$q_3$,x=-1,y=-1,Lpos=180]{z2}
    \Vertex[LabelOut=true,L=$p_3$,x=-2,y=-2]{z3}
    \Vertex[LabelOut=true,L=$t_3$,x=-0.67,y=-2,Lpos=90]{z4}
  \end{tikzpicture}
  \caption{A \quadflower.}
  \label{quadflowerfig}
\end{figure}

\begin{definition}
  \label{tcndef}
  A matroid $M$ is a \emph{\tcn} if $r(M) = r^*(M) = 6$, and there is a labelling
$\{e_1,e_1',e_2,e_2',\dotsc,e_6,e_6'\}$ of $E(M)$ such that
\begin{enumerate}
    \item the following sets are circuits:
$$\{e_1, e_2, e_3', e_4'\}, \{e_3, e_4, e_5', e_6'\}, \{e_5, e_6, e_1', e_2'\},$$
$$\{e_5, e_4, e_1', e_3'\}, \{e_1, e_3, e_2', e_6'\}, \{e_2, e_6, e_5', e_4'\},$$
$$\{e_2, e_3, e_5', e_1'\}, \{e_5, e_1, e_4', e_6'\}, \{e_4, e_6, e_2', e_3'\},$$
$$\{e_4, e_1, e_2', e_5'\}, \{e_2, e_5, e_3', e_6'\}, \{e_3, e_6, e_4', e_1'\},$$
$$\{e_3, e_5, e_4', e_2'\}, \{e_4, e_2, e_1', e_6'\}, \{e_1, e_6, e_3', e_5'\},$$
\item and the following sets are cocircuits:
$$\{e_3, e_4, e_1', e_2'\}, \{e_1, e_2, e_5', e_6'\}, \{e_5, e_6, e_3', e_4'\}$$
$$\{e_1, e_3, e_5', e_4'\}, \{e_5, e_4, e_2', e_6'\}, \{e_2, e_6, e_1', e_3'\}$$
$$\{e_5, e_1, e_2', e_3'\}, \{e_2, e_3, e_4', e_6'\}, \{e_4, e_6, e_5', e_1'\}$$
$$\{e_2, e_5, e_4', e_1'\}, \{e_4, e_1, e_3', e_6'\}, \{e_3, e_6, e_2', e_5'\}$$
$$\{e_4, e_2, e_3', e_5'\}, \{e_3, e_5, e_1', e_6'\}, \{e_1, e_6, e_4', e_2'\}.$$
\end{enumerate}
We call $\{e_1,e_1',e_2,e_2',\dotsc,e_6,e_6'\}$ the \emph{nest labelling} of $M$, and
say that $M$ has \emph{standard labelling} if $E(M)=\{e_1,e_1',e_2,e_2',\dotsc,e_6,e_6'\}$ and the nest labelling is obtained by applying the identity map.
\end{definition}

Note that a ``\tcn'' is referred to as a ``nest of twisted cubes'' in \cite{bww3}.

\begin{theorem}[{\cite[Corollary~6.3]{bww3}}]
  \label{detachthm}
  Let $M$ be a $3$-connected matroid and let $N$ be a $3$-connected minor of $M$ with $|E(M)| \ge 11$, $|E(N)| \ge 4$, and $|E(M)|-|E(N)| \ge 5$.
  Then either
  \begin{enumerate}
    \item $M$ has an $N$-detachable pair;\label{pc1}
    \item there is a matroid $M'$ obtained by performing a single $\Delta$-$Y$ or $Y$-$\Delta$ exchange on $M$ such that $M'$ has an $N$-minor and an $N$-detachable pair, or
    \item $M$ has a \spikey\ $P$ such that at most one element of $E(M)-E(N)$ is not in $P$, or\label{pc2}
    \item $|E(M)| =12$, and $M$ is either a \quadflower\ or a \tcn.\label{pc3}
  \end{enumerate}
\end{theorem}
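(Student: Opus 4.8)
The plan is to prove the contrapositive: assuming that $M$ has no $N$-detachable pair and that no single \dY\ or \Yd\ exchange produces a matroid with both an $N$-minor and an $N$-detachable pair, I would show that $M$ has a \spikey\ covering all but at most one element of $E(M)-E(N)$, or else that $|E(M)|=12$ and $M$ is a \quadflower\ or a \tcn. The starting point is Seymour's Splitter Theorem: since $|E(M)|-|E(N)|\ge 5$, there is an element $e$ with, up to duality, $M\ba e$ both $3$-connected and having an $N$-minor (the wheel/whirl exceptions are absorbed by the size gap and by the exchange allowance in outcome~(ii)). The governing principle is then that for every $f \in E(M\ba e)$ the pair $\{e,f\}$ fails to be detachable, so either $M\ba e\ba f$ loses $3$-connectivity or it loses the $N$-minor.

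The first main step is to convert these failures into a controlled family of $3$-separations and organise them using flower theory. For each $f$ whose removal destroys $3$-connectivity, the matroid $M\ba e\ba f$ has a $2$-separation; since $M\ba e$ is $3$-connected this yields a vertical or cyclic $3$-separation of $M\ba e$ in which $f$ is distinguished, and restoring $e$ relates this to the structure of $M$. I would collect this family of $3$-separations and apply the uncrossing machinery (submodularity of the connectivity function, together with the Oxley--Semple--Whittle theory of maximal flowers) to obtain a flower $\Phi$ in $M$ whose petals refine every separation in the family. Classifying $\Phi$ as an anemone or a daisy, and locating $e$ and the $N$-minor relative to its petals, is the organising device for the rest of the argument.

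The second main step is to read off the circuit and cocircuit structure forced by the flower. The requirement that no second deletion detaches, together with the dual requirement obtained by also considering contractions, pins the petals down to \emph{pairs} and forces two pair-partitions of a suitable $3$-separating set $P \subseteq E(M)-E(N)$: one partition $\{L_i\}$ for which each $L_i\cup L_j$ is a cocircuit, and another $\{K_i\}$ for which each $K_i\cup K_j$ is a circuit. The local conditions in the definition of a \spikey---that $M/p$ and $M\ba p$ are $3$-connected, and that $\si(M/a/b)$ and $\co(M\ba a\ba b)$ are $3$-connected for $a,b$ in distinct parts---are precisely the statements that no detachable pair survives inside $P$ after one reduction, and I would verify them directly from the connectivity of the flower. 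This yields the \spikey\ outcome whenever $|P|\ge 6$.

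The remaining, sporadic case is when the separator cannot be grown to size $6$ while covering $E(M)-E(N)$ up to one element; the rank and corank bookkeeping forces this only at the boundary $|E(M)|=12$ with $r(M)=r^*(M)=6$. Here I would show that the flower is a tight self-dual configuration---three mutually $3$-separating quads, or a symmetric system of fifteen $4$-circuits and fifteen $4$-cocircuits---and match it against the defining incidences of a \quadflower\ or a \tcn\ (\cref{quadflowerdef,tcndef}); the single \dY/\Yd\ allowance in outcome~(ii) is what eliminates all but these two self-dual matroids. I expect the flower case analysis to be the main obstacle: uncrossing delivers a flower cheaply, but extracting \emph{both} pair-partitions simultaneously, controlling how the fixed $N$-minor and the distinguished element $e$ meet every petal, and then ruling out every intermediate configuration except the three named structures, is a long and delicate argument---this is the technical heart of \cite{bww3}.
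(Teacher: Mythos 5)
This statement is not proved in the paper at all: it is quoted verbatim from \cite[Corollary~6.3]{bww3}, where the proof occupies a multi-part series of papers on $N$-detachable pairs. So there is no in-paper argument to compare against, and the only question is whether your sketch constitutes a proof. It does not. What you have written is a plausible reconstruction of the \emph{strategy} of \cite{bww3} --- contrapositive, Splitter Theorem to find a removable element $e$, translation of the failure of every pair $\{e,f\}$ into a family of $3$-separations, flower/uncrossing machinery to organise them, and a terminal case analysis producing the \spikey, \quadflower, and \tcn\ outcomes --- but every step that carries actual content is gestured at rather than carried out, and you concede as much when you call the case analysis ``the technical heart of \cite{bww3}.''

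Concretely, the gaps are: (1) extracting \emph{both} pair-partitions (the $L_i$'s giving cocircuits and the $K_i$'s giving circuits) of a single $3$-separating set $P$, together with the five connectivity conditions in the definition of a \spikey, is not a routine readout from a flower --- it requires a long interplay between the deletion and contraction analyses that you have merely asserted will ``pin the petals down to pairs''; (2) the claim that the sporadic case ``forces this only at the boundary $|E(M)|=12$ with $r(M)=r^*(M)=6$'' is stated without any rank/corank argument, and identifying the surviving configurations with precisely the fifteen-circuit/fifteen-cocircuit incidence structure of \cref{quadflowerdef,tcndef} (and nothing else) is itself a substantial classification; and (3) outcome~(ii) is not a device for ``eliminating all but these two self-dual matroids'' --- in \cite{bww3} the \dY\ exchange arises earlier, when $M$ carries a triangle or triad obstructing the detachable pair, and your sketch gives no account of when and why that exchange restores a detachable pair while preserving the $N$-minor. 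A proof of this theorem from scratch is far beyond what can be recovered from this outline; within the present paper the correct move is simply to cite \cite{bww3}, as the author does.
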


The next lemma is used to handle the possibility that \cref{detachthm}(iii) holds.  It is a slight generalisation of {\cite[Lemma~7.2]{BCOSW2018}}, but can be proved using essentially the same argument as given there.
 
\begin{lemma}[see {\cite[Lemma~7.2]{BCOSW2018}}]
  \label{spikelikes}
  Let $\mathbb{P}$ be a partial field, let $N$ be a non-binary $3$-connected strong stabilizer for the class of $\mathbb{P}$-representable matroids, and let $M$ be an excluded minor for the class of $\mathbb{P}$-representable matroids, where $M$ has an $N$-minor.
  If $M$ has a \spikey\ $P$ such that at most one element of $E(M)-E(N)$ is not in $P$, then $|E(M)| \le |E(N)| + 5$.
\end{lemma}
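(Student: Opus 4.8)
The plan is to argue by contradiction: suppose $|E(M)| \ge |E(N)| + 6$, and then produce a $\mathbb{P}$-representation of $M$, contradicting that $M$ is an excluded minor. The engine is twofold. First, as an excluded minor, $M$ has every proper minor $\mathbb{P}$-representable. Second, because $N$ is a strong $\mathbb{P}$-stabilizer, once a $\mathbb{P}$-representation of $N$ is fixed it extends, uniquely up to scaling, to any $3$-connected minor of $M$ that retains an $N$-minor. The \spikey\ structure is what supplies the rigidity needed to glue two such extensions together across a single spike element.

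First I would set up the bookkeeping. Write $D = E(M) - E(N)$, so $|D| \ge 6$, and by hypothesis at most one element of $D$ lies outside $P$; hence $|D \cap P| \ge 5$. Since $|P| = 2t$ with the two pairings $\{L_1,\dotsc,L_t\}$ and $\{K_1,\dotsc,K_t\}$, this forces $t$ to be large, and, using properties (a)--(e) of a \spikey, I would locate a spike element $p \in D \cap P$ together with a reduction to $N$ in which both $M \ba p$ and $M / p$ remain $3$-connected and retain an $N$-minor. Property (c) gives that $M \ba p$ and $M / p$ are $3$-connected to begin with, while properties (d) and (e), via the circuit pairing $K$ and cocircuit pairing $L$, are what let me peel off the remaining elements of $D \cap P$ in pairs while staying $3$-connected after $\si$ or $\co$ simplification. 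The hypothesis that $N$ is non-binary is used here to ensure these $3$-connected minors genuinely contain $N$ as a stabilizing minor, so that the strong-stabilizer extension applies without degeneracy; this is also where the precise constant $5$ emerges from the count.

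With such a $p$ in hand, I would fix a $\mathbb{P}$-representation of $N$ and extend it — uniquely up to scaling, by the strong-stabilizer property — to representations of $M \ba p$ and of $M / p$. Because each is the unique extension of the same fixed representation of $N$, the two agree (up to scaling) on the common ground set $E(M) - p$. It then remains to reconstitute the column and row of $p$: the cocircuits $L_i \cup L_j$ through $p$ prescribe exactly which entries of $p$ must vanish, while the circuits $K_i \cup K_j$ through $p$ prescribe the linear dependencies $p$ must satisfy. As $t$ is large, $p$ lies in several such circuits and cocircuits whose other legs are already pinned down, and these jointly over-determine $p$; one checks the constraints are consistent and that the resulting matrix is a $\mathbb{P}$-matrix with $M[A] \cong M$. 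This contradicts $M$ being an excluded minor, so $|E(M)| \le |E(N)| + 5$.

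The main obstacle is this final gluing step: showing that the unique representations of $M \ba p$ and $M / p$ induced by $N$ actually fit together into a single $\mathbb{P}$-representation of $M$ rather than clashing. Concretely, one must verify that the position forced on $p$ by the cocircuit pairing $L$ coincides with the one forced by the circuit pairing $K$ (equivalently, by the contraction $M / p$), and that every subdeterminant newly created by inserting $p$ lies in the group of units of $\mathbb{P}$. This is precisely where the two complementary pairings of a \spikey\ are used in tandem, and where the argument of \cite[Lemma~7.2]{BCOSW2018} carries over essentially verbatim: the only genuine generalisation is that the statement is phrased for an arbitrary partial field $\mathbb{P}$ and an arbitrary non-binary $3$-connected strong stabilizer $N$, but since each step relies only on the excluded-minor property, the strong-stabilizer extension, and the combinatorics of the \spikey, none of it depends on the particular choice of $\mathbb{P}$ or $N$.
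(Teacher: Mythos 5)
The paper itself offers no proof of this lemma: it states only that the lemma ``is a slight generalisation of \cite[Lemma~7.2]{BCOSW2018}, but can be proved using essentially the same argument as given there.'' Your closing deferral to that lemma is therefore in the same spirit as the paper, but the sketch you supply in place of that argument contains a conceptual error and a genuine gap. The gluing you describe pairs a representation of $M \ba p$ with one of $M / p$ and asserts that ``the two agree (up to scaling) on the common ground set $E(M)-p$.'' These are representations of two \emph{different} matroids, of different ranks, on that ground set, so there is no direct sense in which they agree or can be superimposed; the standard mechanism (going back to Geelen--Gerards--Kapoor and used in the argument being generalised) instead superimposes representations of two \emph{deletions} $M \ba a$ and $M \ba b$, both of which extend a fixed representation of the common $3$-connected minor $M \ba a \ba b$ carrying an $N$-minor, and then compares the matroid of the assembled matrix with $M$.

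More seriously, your argument never actually uses the hypothesis $|E(M)| \ge |E(N)| + 6$ except through the assertions that ``$t$ is large'' and that the constant $5$ ``emerges from the count.'' As written, the argument would show that \emph{any} excluded minor possessing a single element $p$ with $M \ba p$ and $M / p$ both $3$-connected with $N$-minors is $\mathbb{P}$-representable, which is false. The entire content of the lemma is the step you label ``one checks the constraints are consistent'': one must show that when enough leg-pairs of the \spikey\ lie in $E(M)-E(N)$, every circuit and cocircuit of $M$ that could distinguish $M$ from the matroid of the assembled matrix is already witnessed inside some proper $3$-connected minor retaining an $N$-minor --- this is where properties (c)--(e) of the \spikey, the count of pairs, and the non-binarity of $N$ genuinely enter, and it is exactly the part left unverified. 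Without carrying out that verification, or at least identifying precisely which steps of \cite[Lemma~7.2]{BCOSW2018} depend on the hypotheses being generalised here, the proposal does not constitute a proof.
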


To handle the possibility that \cref{detachthm}(iv) holds, we need to consider what (partial) fields a \quadflower, or a \tcn, is representable over.  This analysis is done in the next section.

\section{Exceptional \texorpdfstring{$12$}{12}-element matroids}

\label{connsec}

When $M$ is a \quadflower\ or a \tcn, and $N$ is a $3$-connected minor of $M$, we cannot guarantee that $M$ has an $N$-detachable pair.
When enumerating potential excluded minors using splicing, these matroids will be skipped.
Thus, we need to be able to verify that these matroids are not excluded minors for the class of $\mathbb{H}_5$-representable matroids.
It suffices to verify that each of these matroids 
contains either the Fano matroid $F_7$, the non-Fano matroid $F_7^-$, or the relaxation of the non-Fano matroid $F_7^=$, as a proper minor (due to the upcoming \cref{u3minors}).
However, so that these techniques can also be applied more generally (for partial fields other than $\mathbb{H}_5$), we briefly study \quadflower s and \tcns.  We will show that there are, up to isomorphism, three \quadflower s that are representable over some field (one is binary, one is dyadic, and one is representable over all fields of size at least four), whereas any other \quadflower\ has a proper minor that is not representable over any field.
Up to isomorphism, there are three \tcns: one is binary, one is dyadic, while the other is not representable over any field.

\subsection*{Quad-flowers}

First, we consider the circuits in a \quadflower.
We will see that the only sets that can be non-spanning circuits are the $4$-element circuits listed in the definition, and certain $6$-element sets.  Among these $6$-element sets, there are some that are circuits in every \quadflower, and some that can be either a circuit or a basis.
We introduce notation to refer to the latter sets.
Recall that a quad-flower with standard labelling is on the ground set $\bigcup_{\ell \in \{1,2,3\}} \{p_\ell,q_\ell,s_\ell,t_\ell\}$.
For $J \subseteq \{1,2,3\}$, let $X_J$ be the $6$-element set such that $\{p_i,t_i\} \subseteq X_J$ for each $i \in J$, and $\{q_i,s_i\} \subseteq X_J$ for each $i \in \{1,2,3\}-J$.

\begin{lemma}
  \label{quadflowerstructure}
  Let $M$ be a \quadflower\ with standard labelling.
  Then all of the following hold:
  \begin{enumerate}
    \item If $C$ is a non-spanning circuit of $M$, then $|C| \in \{4,6\}$.
    \item If $C$ is a circuit of $M$ with $|C|=4$, then $C$ is one of the $15$ circuits listed in \cref{quadflowerdef}(i)--(ii).
    \item For any $(i,j,k)\in \{(1,2,3),(2,3,1),(3,1,2)\}$ and $Z_i \in \{\{p_i,q_i\},\{s_i,t_i\}\}$, $Z_j \in \{\{p_j,t_j\},\{q_j,s_j\}\}$, and $Z_k \in \{\{p_k,s_k\},\{q_k,t_k\}\}$, the set $Z_i \cup Z_j \cup Z_k$ is a $6$-element circuit of $M$.
    \item If $C$ is a circuit of $M$ with $|C|=6$ but $C$ is not a circuit described in (iii), then $C=X_J$ for some $J \subseteq \{1,2,3\}$.
  \end{enumerate}
\end{lemma}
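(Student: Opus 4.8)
The plan is to work entirely from orthogonality together with the explicit circuits and cocircuits listed in \cref{quadflowerdef}, never assuming $M$ is representable. Two facts drive everything: a circuit and a cocircuit never meet in exactly one element, and, since $r(M)=6$, every circuit has at most $7$ elements, so a non-spanning circuit has at most $6$. As the three block cocircuits $B_1,B_2,B_3$ (where $B_\ell=\{p_\ell,q_\ell,s_\ell,t_\ell\}$) partition $E(M)$, any circuit $C$ satisfies $|C\cap B_\ell|\in\{0,2,3,4\}$; I will repeatedly reduce to the possible \emph{profiles} $(|C\cap B_1|,|C\cap B_2|,|C\cap B_3|)$. Since each $B_\ell$ is a $4$-element circuit, it is a $U_{3,4}$-restriction, so any three of $p_\ell,q_\ell,s_\ell,t_\ell$ are independent. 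For each block I name the three ways of splitting it into two pairs: the \emph{row pairs} $\{p_\ell,q_\ell\},\{s_\ell,t_\ell\}$, the \emph{column pairs} $\{p_\ell,s_\ell\},\{q_\ell,t_\ell\}$, and the \emph{diagonal pairs} $\{p_\ell,t_\ell\},\{q_\ell,s_\ell\}$. With this language the listed cross circuits are exactly (row pair of $i)\cup($column pair of $j)$ and the cross cocircuits are (column pair of $i)\cup($row pair of $j)$, for each cyclic $(i,j)$, while $X_J$ is a union of one diagonal pair per block. The whole configuration is invariant under the cyclic shift $1\to2\to3\to1$, so cyclically equivalent cases need only be treated once.

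For part~(i) I rule out the forbidden sizes. There are no loops, since every element lies in a block cocircuit. Any profile of weight $2$ or $3$ lies in a single block; weight $3$ is impossible as the block is $U_{3,4}$, and weight $2$ is excluded because each block contributes all four of its row and column pairs as block-parts of cross cocircuits (block $\ell$ is the first coordinate of one cyclic pair and the second of another), and every pair inside a block meets one of these in a single element. The only weight-$5$ profile is $\{3,2,0\}$; here I prove the stronger statement that a circuit meeting some block in exactly three elements and some other block in none cannot exist, by choosing the cross cocircuit pairing those two blocks whose block-part contains the one missing element, so that orthogonality fails. For part~(ii) the profiles are $\{4,0,0\}$ (the blocks themselves) and $\{2,2,0\}$; orthogonality against the two cross cocircuits pairing each nonempty block with the empty one pins down the type of each pair, leaving precisely four candidates per unordered pair of blocks, which are exactly the twelve listed cross circuits. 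Hence every $4$-element circuit is one of the fifteen listed.

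The core is parts~(iii) and~(iv). By part~(i) every $6$-element circuit has profile $\{2,2,2\}$: the profile $\{4,2,0\}$ would contain a block circuit $B_\ell$, and $\{3,3,0\}$ is excluded by the stronger weight-$5$ argument above. Thus $C=L_1\cup L_2\cup L_3$ with each $L_\ell$ one of the six pairs in $B_\ell$. The key computation is to run orthogonality against the cross cocircuits of each cyclic pair: for $(1,2)$ it collapses to the biconditional ``$L_1$ is a column pair $\iff L_2$ is a row pair'', and cyclically for $(2,3)$ and $(3,1)$. Solving this small constraint system leaves exactly four type-patterns for $(L_1,L_2,L_3)$: the three cyclic patterns having one row, one diagonal, and one column pair (in the cyclic order matching the three cases of~(iii)), together with the all-diagonal pattern, which is precisely the family $\{X_J : J\subseteq\{1,2,3\}\}$. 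This already yields part~(iv): a $6$-element circuit obeys orthogonality, hence is one of these, and if it is not a type-(iii) set it is some $X_J$.

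It remains to show each type-(iii) set is genuinely a circuit, and this is the main obstacle, since orthogonality only certifies consistency. I argue in two steps. First, each such $C$ is dependent: $C$ fails to span $M$ precisely when $E(M)\setminus C$ contains a cocircuit, and for the type-(iii) patterns the complementary $\{2,2,2\}$ set contains exactly the cross cocircuit pairing the column-block to the row-block in the correct cyclic order (this is where the cyclic ordering of~(iii) is essential: the required ordered pair of blocks is a genuine cyclic pair, whereas for the all-diagonal pattern $X_J$ no such cocircuit appears, consistent with $X_J$ being allowed to be a basis). Second, $C$ contains none of the fifteen $4$-element circuits: it meets each block in only two elements, so no block circuit fits, and its row-block and column-block sit in the reverse of a cyclic pair, so no cross circuit fits either. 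Since parts~(i)--(ii) show the only circuits of size at most five are the $4$-element ones, a dependent $6$-element set containing no $4$-element circuit is itself a circuit. I expect the bookkeeping in the orthogonality reduction to the biconditionals and the complementary-cocircuit argument for dependence to be the delicate points; the remaining profile and parity checks are routine.
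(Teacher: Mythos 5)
Your proof is correct, and in outline it matches the paper's: both arguments run orthogonality against the block cocircuits $\{p_\ell,q_\ell,s_\ell,t_\ell\}$ and the twelve listed $4$-element cocircuits, use the fact that no circuit properly contains another, and reduce everything to the profile of intersections with the three blocks. The one genuine divergence is in part (iii): the paper invokes circuit elimination (eliminating a common element from two of the listed $4$-element circuits yields a dependent set inside each required $6$-element set), whereas you certify dependence of each candidate $Z_i\cup Z_j\cup Z_k$ by exhibiting a listed cocircuit in its complement and noting that a non-spanning $6$-element set in a rank-$6$ matroid is dependent. Both routes are sound and must be followed by the same final step (the candidate contains none of the fifteen $4$-element circuits, so it is itself a circuit); your version has the small advantage of making visible why the cyclic ordering in (iii) is forced and why the all-diagonal sets $X_J$ are exempt. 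Your biconditional analysis for the $\{2,2,2\}$ profile, yielding exactly the three cyclic row--diagonal--column patterns plus the all-diagonal pattern, is a clean and complete account of part (iv), which the paper only asserts.
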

\begin{proof}
  By orthogonality and the fact that no circuit is properly contained in another, a \quadflower\ has no circuits of size at most five other than the $15$ circuits of size four listed in the definition.
  Since $r(M)=6$, this proves (i) and (ii).
  Moreover, by circuit elimination and orthogonality, it follows that (iii) holds.
  For (iv), let $X \subseteq E(M)$ with $|X|=6$ such that $X$ does not contain one of the $15$ circuits of size four, and $X$ is not a circuit as described in (iii).
  It follows that $X=X_J$ for some $J \subseteq \{1,2,3\}$.
\end{proof}

Although we focus on the circuits in a \quadflower, a similar statement to \cref{quadflowerstructure} can be made regarding the cocircuits.
It then follows easily that a \quadflower\ is $3$-connected.


We distinguish three particular \quadflower s; we will show that (up to isomorphism) these are the only three \quadflower s that are representable over some field.
We say that
\begin{itemize}
  \item $M$ is a \emph{binary \quadflower} if $M$ has a \quadflower\ labelling such that $X_J$ is a circuit for all $J \subseteq \{1,2,3\}$;
  \item $M$ is a \emph{dyadic \quadflower} if $M$ has a \quadflower\ labelling such that, for $J \subseteq \{1,2,3\}$, the set $X_J$ is a circuit if and only if $|J| \in \{0,2\}$; and
  \item $M$ is a \emph{free \quadflower} if $M$ has a \quadflower\ labelling such that $X_J$ is a basis for all $J \subseteq \{1,2,3\}$.
\end{itemize}
The next lemma illustrates the reasoning behind the first two of these names.

\begin{lemma}
  \label{qflemma}
  Let $M$ be a \quadflower.  Then precisely one of the following holds:
  \begin{enumerate}
    \item $M$ is a binary \quadflower, and $M$ is $\mathbb{F}$-representable if and only if $\mathbb{F}$ has characteristic two,
    \item $M$ is a dyadic \quadflower, and $M$ is $\mathbb{F}$-representable if and only if $\mathbb{F}$ has characteristic not two,
    \item $M$ is a free \quadflower, and $M$ is $\mathbb{F}$-representable if and only if $|\mathbb{F}| \ge 4$, or
    \item $M$ has a proper minor $M'$ such that $M'$ is not representable over any field.
  \end{enumerate}
\end{lemma}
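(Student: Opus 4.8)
The plan is to reduce the entire statement to a classification of which of the eight sets $X_J$ are circuits, and then to control that classification through a single cross-ratio parameter. By \cref{quadflowerstructure}, every circuit of a \quadflower\ other than the sets $X_J$ (for $J \subseteq \{1,2,3\}$) is forced by the \quadflower\ labelling; the same holds dually for cocircuits. Hence the isomorphism type of $M$ is completely determined by the set $\mathcal{J} = \{J \subseteq \{1,2,3\} : X_J \text{ is a circuit}\}$, up to the symmetries of the labelling (the cyclic action permuting the three quads, together with the within-quad symmetries visible in \cref{quadflowerdef}). The three named types are exactly three patterns: $\mathcal{J}$ consisting of all subsets (binary), $\mathcal{J} = \{J : |J| \in \{0,2\}\}$ (dyadic), and $\mathcal{J} = \emptyset$ (free). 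So it suffices to show that a representable \quadflower\ realises precisely one of these three patterns, with the stated field constraints, and that every other pattern produces a proper minor that is non-representable over every field; mutual exclusivity in (i)--(iv) is then immediate, since the three representable patterns are pairwise non-isomorphic (they have $8$, $4$, and $0$ circuit-type sets $X_J$) and (iv) is disjoint from (i)--(iii).

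Next I would set up the representation. Each restriction $M|Q_\ell$ is a copy of the regular matroid $U_{3,4}$, and the cross-circuits of \cref{quadflowerdef}(ii) glue the three rank-$3$ planes along the flower, forcing $r(Q_i \cup Q_j) = 5$ for distinct $i,j$. I would fix an $\mathbb{F}$-representation, normalise the row and column scalings using the $\{0,\pm1\}$-normal form of $U_{3,4}$ on one quad, and then propagate the normalisation around the flower using the forced cross-circuits and cross-cocircuits. The claim to verify by direct computation is that, after this normalisation and after imposing the cyclic consistency of the flower, the representation is determined by a single parameter $\alpha \in \mathbb{F} \setminus \{0,1\}$, and that for each $J$ the condition ``$X_J$ is a circuit'' is equivalent to the vanishing of an explicit factor $f_J(\alpha)$, each $f_J$ being (up to a unit) one of $\alpha$, $\alpha - 1$, or $\alpha + 1$. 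Tracking the simultaneous vanishing pattern then shows the only achievable values of $\mathcal{J}$ over a field are the three above: a generic $\alpha$ forces $\mathcal{J} = \emptyset$ and requires an element outside $\{0,1,-1\}$, hence $|\mathbb{F}| \ge 4$; the harmonic value $\alpha = -1$, available precisely when $\operatorname{char} \mathbb{F} \ne 2$, forces the pattern $|J| \in \{0,2\}$; and in characteristic two the harmonic value collapses onto the forbidden value $1$, the configuration degenerates, and every $X_J$ becomes a circuit. Exhibiting the corresponding explicit matrices over $\GF(2)$, over the dyadic partial field $\mathbb{D}$, and over $\GF(4)$, respectively, then establishes the ``if'' directions of (i)--(iii).

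For the remaining patterns I would prove \cref{qflemma}(iv). By the previous step, any \quadflower\ whose pattern $\mathcal{J}$ is none of the three is non-representable over every field; the real work is to produce a \emph{proper} minor with this property. I would localise the obstruction: for a bad pattern there is a quad $Q_\ell$ such that contracting a basis of $Q_\ell$ and deleting its remaining element leaves a rank-$3$ configuration on the other two quads in which the offending dependence relations among the $X_J$ survive, giving a small point configuration that is non-representable over all fields; alternatively one contracts and deletes to reach a fixed minor of Vámos or non-Pappus type, whose non-representability is field-independent. One then checks, for each pattern up to the labelling symmetries, that the chosen minor is indeed non-representable over every field.

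I expect the main obstacle to be this last step: showing that for \emph{every} non-standard pattern there is a \emph{proper} minor non-representable over all fields simultaneously, rather than merely showing that $M$ itself is non-representable. This forces one to pin down the correct minor for each pattern and to verify field-independent non-representability, and since the number of patterns is finite only after quotienting by the labelling symmetries, a bounded case analysis seems unavoidable. A secondary technical point is the normalisation in the second step, namely verifying that the cyclic consistency of the three glued copies of $U_{3,4}$ really collapses the a priori several junction parameters down to the single parameter $\alpha$.
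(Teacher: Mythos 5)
Your reduction to the pattern of circuits among the eight sets $X_J$, and your one-parameter normal form for representations, are essentially what the paper does (it exhibits the same $6\times 6$ matrix with a single parameter $\alpha\neq 0$, and the circuit conditions on the $X_J$ come out as $\alpha=1$ or $\alpha=-1$ according to the parity of $|J|$, confirming your dictionary up to a relabelling); this correctly yields (i)--(iii) and shows that a quad-flower with any other pattern is non-representable. The genuine gap is the step you yourself flag: part (iv) asks for a \emph{proper} minor that is non-representable over every field, and neither of your candidate constructions delivers one. Contracting a basis of a quad $Q_\ell$ and deleting its fourth element is hopeless: since $Q_\ell$ is a cocircuit, $Q_i\cup Q_j$ is a rank-$5$ hyperplane, so in the resulting rank-$3$ minor each surviving quad has rank $2$, and the cross-circuits of \cref{quadflowerdef}(ii) that meet $Q_\ell$ in two contracted elements collapse points of $Q_i$ and $Q_j$ into parallel pairs. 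The eight-point minor is therefore a degenerate configuration supported on two lines in which none of the information distinguishing the $X_J$ survives, and it is representable; the V\'amos/non-Pappus suggestion likewise does not match the structure actually present.

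The paper closes (iv) with two devices, neither of which appears in your sketch. Writing $\mathbf{X}$ for the set of $J$ with $X_J$ a circuit and viewing it inside the $3$-cube (adjacency $|I\triangle J|=1$): if $\mathbf{X}$ is not a stable set, one contracts a $3$-element subset $Z$ of $X_I\cap X_J$ for adjacent $I,J$ --- crucially, $X_I\cap X_J$ consists of two elements from each of \emph{two} quads, so $Z$ is spread across two quads rather than being a basis of one --- and the rank-$3$ minor $M/Z$ acquires an $F_7$-restriction when both $X_I,X_J$ are circuits and an $F_7^-$-restriction when exactly one is. If $\mathbf{X}$ is stable but not one of the three named patterns, the paper instead contracts a single element $p_1$ and shows, by a direct computation with an explicit $5\times 6$ parametrized matrix, that the $11$-element minor $M/p_1$ simultaneously forces $\alpha=1$ (from a surviving circuit) and $\alpha\neq 1$ (from a surviving basis), hence is non-representable over every field. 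You would need arguments at this level of specificity; as written, your proposal establishes only that $M$ itself is non-representable in the bad cases, which is strictly weaker than what \cref{qflemma}(iv) asserts.
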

\begin{proof}
  We let $M$ be a \quadflower\ with standard labelling, so \cref{quadflowerstructure} holds.
  Consider $X_J$ for some $J \subseteq \{1,2,3\}$.
  There are $8$ possibilities for the set $X_J$, and each of these could be either a circuit or a basis in a \quadflower\ $M$.
  It is convenient to think of each $X_J$ as a vertex in a graph: consider the $3$-dimensional hypercube graph $Q$ on vertex set $\{X_J : J \subseteq \{1,2,3\}\}$, where distinct vertices $X_I$ and $X_J$ are adjacent if and only if the cardinality of the set difference $I \triangle J$ is one.
  Let $\mathbf{X}$ be a subset of the vertices of $Q$, where $X_J \in \mathbf{X}$ if $X_J$ is a circuit of $M$, whereas $X_J \notin \mathbf{X}$ if $X_J$ is a basis of $M$.
  Then, for the \quadflower\ $M$, we obtain an auxiliary set $\mathbf{X} \subseteq V(Q)$.

  Suppose that $X_I,X_J \in \mathbf{X}$ for some $X_I$ and $X_J$ that are adjacent in $Q$ (so $|I \triangle J| = 1$).
  Let $Z \subseteq X_I \cap X_J$ such that $|Z|=3$.
  Then it follows that $M/Z$ has an $F_7$-restriction (where $F_7$ is the Fano matroid), so $M$ is not representable over any field that does not have characteristic two.
  On the other hand, if precisely one of $X_I$ and $X_J$ is in $\mathbf{X}$, then it follows that $M/Z$ has an $F^-_7$-restriction (where $F^-_7$ is the non-Fano matroid), so $M$ is not representable over fields with characteristic two.

  We first assume that $M$ is not a binary \quadflower, a dyadic \quadflower, nor a free \quadflower, and show that (iv) holds.
  In particular, since $M$ is not a binary \quadflower\ nor a free \quadflower, $\mathbf{X} \notin \{\emptyset, V(Q)\}$.
  We claim that if $\mathbf{X}$ is not a stable set in $Q$, then $M$ has a proper minor that is not representable over any field.
  Since $\mathbf{X} \notin \{\emptyset, V(Q)\}$, there exist adjacent vertices of $Q$, precisely one of which is in $\mathbf{X}$. 
  Suppose $\mathbf{X}$ is not a stable set in $Q$, so there exist adjacent vertices of $Q$ in $\mathbf{X}$.
  Then there exist distinct $I,J,K \subseteq \{1,2,3\}$ such that $X_I,X_J \in \mathbf{X}$ and $X_K \notin \mathbf{X}$, where $X_J$ is adjacent to both $X_I$ and $X_K$ in $Q$.
  It now follows, by the previous paragraph, that $M$ has a proper minor that is not representable over any field, as claimed.

  Now we assume that $\mathbf{X}$ is a stable set in $Q$.
  We note that $M$ is one of four \quadflower s, up to isomorphism (recalling that $M$ is not a dyadic or free \quadflower).
  For any $j \in \{1,2,3\}$, the set $\mathbf{Y}_j = \{X_J : j \in J\}$ or the set $\mathbf{Y}'_{j}=\{X_J : j \notin J\}$ induces a $4$-cycle in $Q$, so at most two of the four sets in $\mathbf{Y}_j$ or $\mathbf{Y}'_j$ are circuits.
  Since $M$ is not a binary \quadflower, a dyadic \quadflower, nor a free \quadflower, there exists some $j \in \{1,2,3\}$ such that either $|\mathbf{Y}_j \cap \mathbf{X}| = 1$ or $|\mathbf{Y}'_j \cap \mathbf{X}| = 1$.
  Up to isomorphism, we may assume that
  $\{p_1,t_1,p_2,t_2,p_3,t_3\}$ is a circuit in $M$, but the sets
  $\{p_1,t_1,p_2,t_2,q_3,s_3\}$,
  $\{p_1,t_1,q_2,s_2,p_3,t_3\}$, and
  $\{p_1,t_1,q_2,s_2,q_3,s_3\}$ are bases.
  Consider $M' = M / p_1$.
  We claim that $M'$ is not representable over any field.
  If $M'$ has an $\mathbb{F}$-representation $A'$ for some field $\mathbb{F}$, then it is easy to verify (see \cite[Theorem~6.4.7]{oxley}, for example) that $A'$ is projectively equivalent to a matrix of the form
  $$\kbordermatrix{
    & s_1 & t_1 & s_2 & t_2 & s_3 & t_3 \\
    q_1 & 0 & -1 & 1 &  1 & 0 & 0 \\
    p_2 & 0 &  0 & * &  0 & 1 & 1 \\
    q_2 & 0 &  0 & 0 &  * & 1 & * \\
    p_3 & 1 &  1 & 0 &  0 & 1 & 0 \\
    q_3 & 1 &  * & 0 &  0 & 0 & *},$$
    where $*$ signifies the entry is non-zero.

    By considering the dependent sets $\{s_1,s_3,t_3,q_1,q_3\}$, $\{s_1,t_1,s_3,q_1,p_2\}$, $\{t_1,s_3,t_3,q_1,p_2\}$, and $\{s_2,t_2,s_3,p_3,q_3\}$, we see that the matrix is in fact of the form
  $$\kbordermatrix{
    & s_1 & t_1 & s_2 & t_2 & s_3 & t_3 \\
    q_1 & 0 & -1 & 1 &  1 & 0 & 0 \\
    p_2 & 0 &  0 & \alpha &  0 & 1 & 1 \\
    q_2 & 0 &  0 & 0 & -\alpha & 1 & 1 \\
    p_3 & 1 &  1 & 0 &  0 & 1 & 0 \\
    q_3 & 1 &  1 & 0 &  0 & 0 & -1},$$
    for some $\alpha \in \mathbb{F} - \{0\}$.
    However, $\alpha=1$ since $\{t_1,p_2,t_2,p_3,t_3\}$ is a circuit in $M'$, but $\alpha \neq 1$ since $\{t_1,q_2,s_2,q_3,s_3\}$ is a basis in $M'$.
  So $M'$ is not representable over any field.

  We have shown that if $M$ is not a binary \quadflower, a dyadic \quadflower, nor a free \quadflower, then (iv) holds.
  Suppose now that $M$ is a \quadflower\ that is representable over some partial field $\mathbb{P}$.
  Let $A$ be a $\mathbb{P}$-representation for $M$. 
  Then it is easy to verify that $A$ is projectively equivalent to the matrix
  $$\kbordermatrix{
      & s_1 & t_1 & s_2 & t_2 &  s_3 &    t_3 \\
p_1 &     1 &     0 &     1 &     1 &      0 &        0 \\
q_1 &     0 &    -1 &     1 &     1 &      0 &        0 \\
p_2 &     0 &     0 &     \alpha &     0 &      1 &        1 \\
q_2 &     0 &     0 &     0 &    -\alpha &      1 &        1 \\
p_3 &     1 &     1 &     0 &     0 & 1 &        0 \\
q_3 &     1 &     1 &     0 &     0 &      0 & -1},$$
  where $\alpha \neq 0$.

  When $M$ is a binary \quadflower, we obtain a $\GF(2^n)$-representation (for any positive integer $n$) by setting $\alpha=1$, so $M$ is representable over any field with characteristic two.
  Since a binary \quadflower\ has an $F_7$-minor, it is not representable over any other field, so (i) holds.

  When $M$ is a dyadic \quadflower, we obtain a $\mathbb{D}$-representation by setting $\alpha= 1$ (where $\det(A[\{p_1,q_2,q_3\},\{s_1,t_2,t_3\}]) = 2$).  In particular, this is an $\mathbb{F}$-representation for any field $\mathbb{F}$ having characteristic not two.  Since a dyadic \quadflower\ has an $F_7^-$-minor, (ii) holds.

  When $M$ is a free \quadflower, then $M$ has an $F_7^=$-minor, where $F_7^=$ is the matroid obtained from $F_7^-$ by relaxing a circuit-hyperplane.  Thus $M$ is not $2$-regular. In particular, as $F_7^=$ has a $U_{2,5}$-minor, $M$ is not binary or ternary.
  However, $M$ is $\mathbb{F}$-representable whenever $|\mathbb{F}| \ge 4$, as then the above matrix is an $\mathbb{F}$-representation after choosing $\alpha \in \mathbb{F} - \{0,1,-1\}$.
  (Alternatively, the above matrix is a $\mathbb{K}_2$-representation, and there is a homomorphism from $\mathbb{K}_2$ to any field with size at least four \cite[Lemma 2.5.33]{vanZwam2009}.)
  This shows that (iii) holds.
\end{proof}

\subsection*{Nests}

Next, we consider \tcns.  We start by considering the circuits.
We omit the proof of the next lemma; it follows from orthogonality, the circuit axioms, and the fact that $r(M)=6$, similar to \cref{quadflowerstructure}.

\begin{lemma}
  \label{tcnstructure}
  Let $M$ be a \tcn\ with standard labelling.
  Then all of the following hold:
  \begin{enumerate}
    \item If $C$ is a non-spanning circuit of $M$, then $|C| \in \{4,6\}$.
    \item If $C$ is a circuit of $M$ with $|C|=4$, then $C$ is one of the $15$ circuits listed in \cref{tcndef}(i).
    \item If $\{e_i,e_j,e_k',e_\ell'\}$ is a circuit of $M$, then $\{e_i,e_j\} \cup (\{e_1',\dotsc,e_6'\}-\{e_k',e_\ell'\})$ and $(\{e_1,\dotsc,e_6\}-\{e_i,e_j\}) \cup \{e_k',e_\ell'\}$ are $6$-element circuits of $M$.
    \item If $C$ is a circuit of $M$ with $|C|=6$ but $C$ is not a circuit described in (iii), then $C\in \{\{e_1,e_2,e_3,e_4,e_5,e_6\},\{e_1',e_2',e_3',e_4',e_5',e_6'\}\}$.
  \end{enumerate}
\end{lemma}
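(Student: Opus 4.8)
The plan is to follow the strategy of the proof of \cref{quadflowerstructure}, using only orthogonality, the circuit axioms, and the hypotheses $r(M)=r^*(M)=6$. I would lean on two facts throughout. First, inspecting the two lists in \cref{tcndef} shows that every listed circuit and every listed cocircuit consists of exactly two unprimed and two primed elements, that the two lists are interchanged by the involution $e_i\leftrightarrow e_i'$, and that each unprimed pair (and each primed pair) is the corresponding pair of exactly one listed circuit and of exactly one listed cocircuit; these symmetries let me reduce every case analysis below to a handful of representatives. Second, since $|E(M)|=12$ and $r^*(M)=6$, a rank count gives $r^*(E(M)-S)=r(S)$ for each $6$-element set $S$, so \emph{a $6$-element set is dependent if and only if its complement contains a cocircuit}. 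The other recurring tool is orthogonality: a set meeting some cocircuit in exactly one element cannot be a circuit, and it suffices to test this against the $15$ \emph{listed} cocircuits.

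For (i) and (ii), first note that $r(M)=6$ forces every non-spanning circuit to have at most $6$ elements. Every element lies in one of the $15$ listed circuits, so no singleton is a circuit; and for each element $x$ the listed cocircuits through $x$ meet in $\{x\}$ alone, so orthogonality excludes any parallel pair $\{x,y\}$. A triangle contains no listed circuit, so I would exclude it by exhibiting, for each triangle up to symmetry, a listed cocircuit meeting it in exactly one element; the same device excludes every $5$-element set that does not already contain one of the $15$ listed circuits, and by minimality a $5$-element circuit contains none. Finally, the same orthogonality test shows that any $4$-element set other than the $15$ listed circuits meets some listed cocircuit in exactly one element, and so is not a circuit. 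This gives (i) and (ii).

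For (iii), the two $6$-element sets are complementary in $E(M)$, so by the rank identity above each is dependent provided its complement contains a cocircuit. Writing the given circuit as $\{e_i,e_j,e_k',e_\ell'\}$, the listed cocircuit whose primed pair is $\{e_k',e_\ell'\}$ has its unprimed pair disjoint from $\{e_i,e_j\}$, hence lies inside $(\{e_1,\dotsc,e_6\}-\{e_i,e_j\})\cup\{e_k',e_\ell'\}$; symmetrically, the listed cocircuit whose unprimed pair is $\{e_i,e_j\}$ lies inside $\{e_i,e_j\}\cup(\{e_1',\dotsc,e_6'\}-\{e_k',e_\ell'\})$. Both sets are therefore dependent, and by (i) and (ii) the only circuit that can sit inside a $6$-element set is a listed circuit or the set itself; since neither set contains a listed circuit (the unique listed circuit on the unprimed pair $\{e_i,e_j\}$ is $\{e_i,e_j,e_k',e_\ell'\}$, and its primed pair is excluded), each is a circuit.

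For (iv), suppose $C$ is a $6$-element circuit that is not of the form in (iii). By minimality $C$ contains no listed circuit, and the monochromatic sets $\{e_1,\dotsc,e_6\}$ and $\{e_1',\dotsc,e_6'\}$ meet every listed cocircuit in exactly two elements, so orthogonality does not exclude them. The crux is to verify that every $6$-element set of mixed type that is neither monochromatic nor of the form in (iii) and contains no listed circuit meets some listed cocircuit in exactly one element; a case analysis over the colour types $(5,1)$, $(4,2)$, $(3,3)$, $(2,4)$ and $(1,5)$, reduced by the symmetries noted above, confirms this. Such a set is then not a circuit, so $C$ must be monochromatic. The main obstacle is exactly this case analysis, together with the incidence facts used in (iii): a \tcn\ lacks the transparent decomposition into three ``quads'' that makes \cref{quadflowerstructure} so clean, so the twisted-cube incidences must be handled directly, and it is the primed/unprimed symmetry and the pair-to-(co)circuit correspondences that keep the number of cases small.
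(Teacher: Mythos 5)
Your proposal is correct and follows exactly the route the paper indicates: the paper omits the proof of this lemma, stating only that it follows from orthogonality, the circuit axioms, and $r(M)=r^*(M)=6$, in the same way as \cref{quadflowerstructure}, and your argument is a faithful elaboration of that plan (the complementation/rank identity for $6$-element sets, the pair-to-(co)circuit correspondences, and the finite orthogonality check reduced by the prime-swap symmetry all check out on inspection of the lists in \cref{tcndef}). No gaps.
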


By \cref{tcnstructure}, there are three non-isomorphic \tcns, depending on whether both, none, or one of the sets described in (iv) are circuits.
We say that a \tcn\ is
\begin{itemize}
  \item \emph{tight} if it has a nest labelling such that both $\{e_1,e_2,e_3,e_4,e_5,e_6\}$ and $\{e_1',e_2',e_3',e_4',e_5',e_6'\}$ are circuits;
  \item \emph{loose} if it has a nest labelling such that both $\{e_1,e_2,e_3,e_4,e_5,e_6\}$ and $\{e_1',e_2',e_3',e_4',e_5',e_6'\}$ are bases; and
  \item \emph{slack} if it has a nest labelling such that one of $\{e_1,e_2,e_3,e_4,e_5,e_6\}$ and $\{e_1',e_2',e_3',e_4',e_5',e_6'\}$ is a circuit, and the other is a basis.
\end{itemize}

\begin{lemma}
  \label{tcnlemma}
  Let $M$ be a \tcn.  Then $M$ has a $\{F_7,F_7^-\}$-minor.  Moreover,
  \begin{enumerate}
    \item when $M$ is tight, $M$ is $\mathbb{F}$-representable if and only if $\mathbb{F}$ has characteristic two;
    \item when $M$ is loose, $M$ is $\mathbb{F}$-representable if and only if $\mathbb{F}$ has characteristic not two; and
    \item if $M$ is slack, then $M$ is not representable over any field.
  \end{enumerate}
\end{lemma}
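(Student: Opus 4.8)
The plan is to mirror the proof of \cref{qflemma}. Throughout, let $M$ be a \tcn\ with standard labelling, write $E = \{e_1,\dotsc,e_6\}$ and $E' = \{e_1',\dotsc,e_6'\}$, and recall from \cref{tcnstructure} that the only freedom in the circuit structure is whether each of $E$ and $E'$ is a circuit or a basis; this is exactly what distinguishes the tight, loose, and slack cases.

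First I would establish the $\{F_7, F_7^-\}$-minor, together with the necessary characteristic conditions, by contracting $Z = \{e_4, e_5, e_6\}$. Using \cref{tcndef}(i) and the rule for circuits of a contraction, the three circuits $\{e_5,e_6,e_1',e_2'\}$, $\{e_5,e_4,e_1',e_3'\}$, and $\{e_4,e_6,e_2',e_3'\}$ show that $e_1', e_2', e_3'$ lie in a common parallel class of $M/Z$, while the remaining circuits reduce to six triangles on the seven-element set $\{e_1,e_2,e_3,e_1',e_4',e_5',e_6'\}$ underlying $\si(M/Z)$. One then checks that these six triangles are precisely six of the seven Fano lines, the potential seventh line being $\{e_1,e_2,e_3\}$; and this triple is dependent in $M/Z$ exactly when $E$ is a circuit of $M$. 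Hence $\si(M/Z) \cong F_7$ when $E$ is a circuit and $\si(M/Z) \cong F_7^-$ when $E$ is a basis. In either case $M$ has a $\{F_7,F_7^-\}$-minor. Moreover a tight or slack \tcn\ (where $E$ is a circuit) then has an $F_7$-minor, so is representable only in characteristic two, while a loose \tcn\ (where $E$ is a basis) has an $F_7^-$-minor, so is representable only in characteristic not two.

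For the converse direction and for the slack case I would build an explicit reduced representation. The set $X = \{e_1,e_2,e_3,e_4',e_5',e_6'\}$ contains none of the $4$-circuits of \cref{tcndef}(i) and is not a $6$-circuit of the form in \cref{tcnstructure}(iii)--(iv), so by \cref{tcnstructure} it is a basis. Taking $X$ as the row set and $Y = \{e_1',e_2',e_3',e_4,e_5,e_6\}$ as the column set, the fundamental circuits determine the support of the $6 \times 6$ reduced matrix $A$, with exactly three nonzero entries in each row and column. Crucially, this support and the constraints on the entries come entirely from the $15$ four-element circuits and $15$ four-element cocircuits, which are common to all three \tcns; using these together with row and column scaling one normalises $A$ to a single matrix (up to scaling) over any putative partial field. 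With respect to this basis, $E$ is a circuit if and only if $\det A[\{e_4',e_5',e_6'\},\{e_4,e_5,e_6\}] = 0$, and $E'$ is a circuit if and only if $\det A[\{e_1,e_2,e_3\},\{e_1',e_2',e_3'\}] = 0$; by the support pattern each of these is a $3 \times 3$ determinant with zero diagonal, hence a sum of two products of normalised entries.

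The heart of the argument is to verify that, after normalisation, both of these determinants equal a unit times $2$, so that each vanishes if and only if the field has characteristic two. Granting this, the three cases fall out. In characteristic two both determinants vanish, so $E$ and $E'$ are both circuits, giving the tight \tcn\ together with a $\GF(2)$-representation, so (i) holds. In characteristic not two both determinants are nonzero, so $E$ and $E'$ are both bases, giving the loose \tcn, whose normalised matrix is a $\mathbb{D}$-representation and hence represents $M$ over every field of characteristic not two, so (ii) holds. Finally, since the two determinants are simultaneously zero or simultaneously nonzero over any field, a slack \tcn\ (in which exactly one of $E,E'$ is a circuit) cannot be realised over any field, giving (iii). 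The main obstacle is precisely this normalisation-and-evaluation step: one must show that the $4$-circuit and $4$-cocircuit constraints rigidly pin down $A$ up to scaling, so that (unlike the \quadflower\ case) no free parameter survives, and then check that both $3 \times 3$ determinants reduce to $\pm 2$. This is a finite, if intricate, computation entirely analogous to the matrix analysis carried out in \cref{qflemma}.
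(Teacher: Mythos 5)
Your first half coincides with the paper's: the paper also contracts $\{e_4,e_5,e_6\}$ (and deletes two of the resulting parallel elements $e_1',e_2',e_3'$ rather than simplifying, which is the same thing) to exhibit a rank-$3$, $7$-element minor with six Fano lines whose seventh line $\{e_1,e_2,e_3\}$ is dependent exactly when $E$ is a circuit; this gives the $\{F_7,F_7^-\}$-minor and the ``only if'' directions of (i) and (ii) just as you describe. Where you diverge is in the remaining steps. For (i) and (ii) the paper does not derive the representation by a normalisation argument; it simply writes down an explicit $0$--$1$ matrix with three ones per row and column and observes that over characteristic two it represents a tight \tcn\ and otherwise a loose one. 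More importantly, for (iii) the paper avoids your rigidity argument entirely: by the symmetry between primed and unprimed elements it considers a second minor, $M/\{e_1',e_2',e_3'\}\ba\{e_4,e_6\}$, which plays the role of your first minor with $E'$ in place of $E$. Since a slack \tcn\ has exactly one of $E$, $E'$ a circuit, the two minors together give both an $F_7$- and an $F_7^-$-minor, so non-representability is immediate.

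The weight-bearing step in your proposal is the claim that the $15$ common $4$-element circuits and cocircuits pin down the reduced matrix up to scaling over any field, after which both relevant $3\times 3$ determinants equal a unit times $2$. You correctly flag this as the main obstacle, but you do not carry it out, and your conclusion (iii) rests entirely on it: if a free parameter survived the normalisation, one could not rule out a choice making exactly one of the two determinants vanish, and the slack case would be open. Even granting rigidity, you would still need to verify that the normalised matrix actually realises all the circuits and cocircuits of \cref{tcndef} (a normalisation argument only shows uniqueness of a representation, not existence). Both issues disappear if you instead use the paper's symmetric second minor for (iii) and an explicit matrix check for (i) and (ii); I would recommend replacing the determinant analysis with that argument, which requires no claim about uniqueness of representations.
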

\begin{proof}
  Assume that $M$ has standard labelling, and consider the minor $M_1=M / \{e_4,e_5,e_6\} \ba \{e_1',e_2'\}$.
  By \cref{tcndef}(i) and \cref{tcnstructure}(iii), $M_1$ has circuits $\{e_3,e_3',e_4'\}$, $\{e_3,e_5',e_6'\}$, $\{e_2,e_5',e_4'\}$, $\{e_1,e_4',e_6'\}$, $\{e_2,e_3',e_6'\}$, and $\{e_1,e_3',e_5'\}$.
  Moreover, $\{e_1,e_2,e_3\}$ is either a circuit or a basis in $M_1$.
  It follows that $M$ has an $\{F_7,F_7^-\}$-minor.
  Suppose that $M$ is tight or loose.
  If $M$ is tight, then $M$ has an $F_7$-minor, so $M$ is not representable over any field that does not have characteristic two; and if $M$ is loose, then $M$ has an $F_7^-$-minor, so $M$ is not representable over any field that has characteristic two.
  Let $A$ be the matrix
  $$\kbordermatrix{
      & e_1 & e_3 & e_5 & e_2' & e_4' & e_6' \\
 e_2  &   0 &   1 &   0 &    0 &    1 &    1 \\
 e_4  &   0 &   0 &   1 &    1 &    0 &    1 \\
 e_6  &   1 &   0 &   0 &    1 &    1 &    0 \\
 e_1' &   0 &   1 &   1 &    0 &    0 &    1 \\
 e_3' &   1 &   0 &   1 &    1 &    0 &    0 \\
 e_5' &   1 &   1 &   0 &    0 &    1 &    0 }$$
 over a field $\mathbb{F}$.
 When $\mathbb{F}$ has characteristic two, $A$ is an $\mathbb{F}$-representation of a tight nest; otherwise, $A$ is an $\mathbb{F}$-representation of a loose nest.
 This proves (i) and (ii).

  It remains to prove (iii).
  Suppose $M$ is slack, and consider the minor $M_2=M / \{e_1',e_2',e_3'\} \ba \{e_4,e_6\}$.
  By \cref{tcndef}(i) and \cref{tcnstructure}(iii), $M_2$ has circuits $\{e_5,e_1,e_5'\}$, $\{e_1,e_2,e_4'\}$, $\{e_1,e_3,e_6'\}$, $\{e_2,e_3,e_5'\}$, $\{e_2,e_5,e_6'\}$, and $\{e_3,e_5,e_4'\}$.
  Either $\{e_1,e_2,e_3\}$ is a circuit in $M_1$ and $\{e_4',e_5',e_6'\}$ is a basis in $M_2$, or $\{e_1,e_2,e_3\}$ is a basis in $M_1$ and $\{e_4',e_5',e_6'\}$ is a circuit in $M_2$.
  It follows that $M$ has both an $F_7$- and $F_7^-$-minor, so $M$ is not representable over any field.
\end{proof}

\section{Excluded minors for \texorpdfstring{$3$}{3}-regular matroids}
\label{h5u3sec}

In this section, we consider excluded minors for the class of $\mathbb{H}_5$-representable matroids.  Recall that this is, equivalently, the class of $3$-regular matroids.
The following theorem is the recently obtained excluded-minor characterisation of $2$-regular matroids \cite{BOSW2023b,BP20}.  The matroids appearing in this theorem are well known (as they appear in the excluded-minor characterisation for either $\GF(4)$-representable matroids~\cite{GGK2000} or near-regular matroids \cite{HMvZ2011}), with two exceptions.
The matroid $P_8^-$ is obtained from $P_8$ by relaxing one of the two disjoint circuit-hyperplanes.
The matroid $TQ_8$ is a rank-$4$ sparse paving matroid on ground set $\{0,1,\dotsc,7\}$ with eight non-spanning circuits $\big\{\{i, i+2, i+4, i+5\} : i \in \{0, 1, \dotsc , 7\}\big\}$, working modulo 8.

\begin{theorem}[{\cite[Theorem~1.2]{BOSW2023b}}]
  \label{2regexminorconj}
  A matroid $M$ is $2$-regular if and only if $M$ has no minor isomorphic to
$U_{2,6}$, $U_{3,6}$, $U_{4,6}$, $P_6$,
$F_7$, $F_7^*$, $F_7^-$, $(F_7^-)^*$, $F_7^=$, $(F_7^=)^*$,
$\AG(2,3)\ba e$, $(\AG(2,3)\ba e)^*$, $(\AG(2,3)\ba e)^{\Delta Y}$, $P_8$, $P_8^-$, $P_8^=$, and $\TQ_8$.
\end{theorem}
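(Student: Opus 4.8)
The plan is to establish the two directions separately. For necessity I would check that each of the seventeen listed matroids really is an excluded minor for $\mathbb{U}_2$-representability (that is, $2$-regularity): none is $\mathbb{U}_2$-representable, while every proper minor of each is. Using the proxy machinery recalled above (the underlying result \cite[Theorem~3.5]{BP20} also furnishes a proxy for $\mathbb{U}_2$), deciding $\mathbb{U}_2$-representability of a fixed small matroid reduces to a search for a cross-ratio-confined representation over a fixed finite field, so every such check is finite. The work is cut down by \cref{osvdelta}, which tells us the set of excluded minors is closed under duality and under \dY\ and \Yd\ exchange: thus $U_{4,6}=U_{2,6}^*$, the three dual pairs $\{F_7,F_7^*\}$, $\{F_7^-,(F_7^-)^*\}$, $\{F_7^=,(F_7^=)^*\}$, and the orbit $\{\AG(2,3)\ba e,(\AG(2,3)\ba e)^*,(\AG(2,3)\ba e)^{\Delta Y}\}$ each require only a single representability computation, leaving $U_{2,6}$, $U_{3,6}$, $P_6$, $P_8$, $P_8^-$, $P_8^=$, and $\TQ_8$ to be treated individually.

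The substantive direction is completeness of the list, which I would obtain by the two-step scheme of first bounding the size of an excluded minor and then searching exhaustively below that bound. For the bound I would show that any excluded minor $M$ has at most $15$ elements. Fix a small, non-near-regular, $3$-connected strong $\mathbb{U}_2$-stabilizer $N$ (such as $U_{2,5}$ or $U_{3,5}$) that is a minor of $M$; \cref{nearregconn} supplies the single-element deletions needed to keep both $N$ and $3$-connectivity in play. Provided $|E(M)|$ is large relative to $|E(N)|$, \cref{detachthm} says that $M$ either has an $N$-detachable pair, or reduces by one \dY/\Yd\ exchange (permissible by \cref{osvdelta}) to a matroid that does, or has a \spikey, or is one of the exceptional $12$-element matroids. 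An $N$-detachable pair $\{a,b\}$ would leave $M \ba a,b$ $3$-connected, $2$-regular, and still with an $N$-minor; since $N$ is a strong stabilizer, the representation of $M \ba a,b$ extends compatibly to $M \ba a$ and $M \ba b$, and these glue to a $\mathbb{U}_2$-representation of $M$, contradicting that $M$ is an excluded minor. The \spikey\ case is killed by \cref{spikelikes}, which forces $|E(M)| \le |E(N)| + 5$, and the $12$-element case cannot arise because, by \cref{qflemma,tcnlemma}, every \quadflower\ and every \tcn\ has a proper minor ($F_7$, $F_7^-$, $F_7^=$, or a matroid representable over no field) that is not $2$-regular.

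With the bound secured, the enumeration runs exactly as the present paper's does for $\mathbb{H}_5$. Starting from the seed stabilizers, I would build the catalogue of all $3$-connected $\mathbb{U}_2$-representable matroids with a $\{U_{2,5},U_{3,5}\}$-minor on up to $15$ elements by repeatedly \emph{splicing}: an $(n+1)$-element candidate is formed from two $n$-element matroids sharing a common $(n-1)$-element minor, and each candidate is tested for a cross-ratio-confined representation over the chosen proxy field. A matroid is recorded as an excluded minor exactly when it fails this test but every proper minor passes. The finitely many excluded minors with no $\{U_{2,5},U_{3,5}\}$-minor, which are forced to be binary, ternary, or near-regular obstructions, are recovered separately from the known excluded minors for those smaller classes. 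Assembling the two sources yields precisely the seventeen matroids listed.

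The main obstacle is the size bound rather than the enumeration. The detachable-pairs theorem leaves exactly the exceptional families, namely \spikey s and the $12$-element \quadflower s and \tcns, that must be eliminated by hand, and verifying that none of these can grow into an arbitrarily large excluded minor is where the delicate connectivity bookkeeping concentrates; this is the heart of the argument in \cite{BOSW2023b}. The remaining difficulty is purely computational: near fifteen elements the number of $3$-connected matroids to generate and certify is enormous, so the whole enumeration is feasible only because splicing avoids building each matroid from scratch and because the proxy representability test is cheap enough to apply at that scale.
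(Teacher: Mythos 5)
This theorem is not proved in the paper at all: it is imported verbatim as \cite[Theorem~1.2]{BOSW2023b}, with the actual work split between \cite{BOSW2023b} (the bound of $15$ elements on an excluded minor for $2$-regular matroids) and \cite{BP20} (the exhaustive computation up to that bound). Your overall architecture --- finite representability checks via proxies for necessity, then a size bound followed by a splice-based enumeration for completeness --- is a faithful description of how those two papers divide the labour, and your handling of the \spikey\ and $12$-element exceptional cases via \cref{spikelikes} and \cref{qflemma,tcnlemma} matches how the present paper disposes of them in its $\mathbb{H}_5$ argument.

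There is, however, a genuine gap in your derivation of the size bound, and it sits exactly where the real difficulty of \cite{BOSW2023b} lies. You claim that if $M$ is an excluded minor with an $N$-detachable pair $\{a,b\}$, then the strong-stabilizer property lets you extend a representation of $M \ba a \ba b$ to representations of $M \ba a$ and of $M \ba b$, and that these ``glue to a $\mathbb{U}_2$-representation of $M$, contradicting that $M$ is an excluded minor.'' This is false. Placing the two new columns into a single matrix $A_{ab}$ yields a $\mathbb{U}_2$-matrix whose matroid agrees with $M$ on every set avoiding at least one of $a$ and $b$, but it may disagree on sets containing both; that is, $M[A_{ab}]$ need not be $M$. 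If your gluing step were valid, \cref{detachthm} together with \cref{spikelikes} and the quad-flower/nest analysis would immediately force every excluded minor to have at most $12$ elements, making \cite{BOSW2023b} essentially trivial --- whereas that paper's bound is $15$ precisely because the failure of $A_{ab}$ to represent $M$ must be analysed at length (the ``incriminating'' entries where $M[A_{ab}]$ and $M$ differ are what drive the delicate case analysis there). Note also that in the present paper the detachable-pair theorem is used for a different purpose: not to bound $|E(M)|$, but to certify that the splice-generated catalogue is complete, i.e.\ that every excluded minor on $n \ge 11$ elements (up to duality and $\Delta Y$-equivalence, and outside the exceptional cases) arises as a splice of two catalogued $(n-1)$-element matroids. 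Your enumeration step uses it correctly in that role; your bound step does not.
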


The next lemma follows from \cref{2regexminorconj}, by considering which of the excluded minors for $2$-regular matroids are $3$-regular (see \cite[Table~5]{BP20}).

\begin{lemma}
  \label{u3minors}
  Let $M$ be an excluded minor for the class of $3$-regular matroids.  Then, either
  \begin{enumerate}
    \item $M$ has a $\{U_{2,6}, U_{3,6}, U_{4,6}, P_6\}$-minor, or
    \item $M$ is isomorphic to a matroid in
\begin{multline*}
  \{F_7,F_7^*,F_7^-,(F_7^-)^*,F_7^=,(F_7^=)^*,\\
    \AG(2,3)\ba e, (\AG(2,3)\ba e)^*, (\AG(2,3)\ba e)^{\Delta Y},
  P_8,P_8^-,P_8^=,\TQ_8\}.
\end{multline*}
  \end{enumerate}
\end{lemma}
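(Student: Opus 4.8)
The plan is to deduce this directly from the excluded-minor characterisation of $2$-regular matroids in \cref{2regexminorconj}, exploiting the fact that the class of $2$-regular matroids is contained in the class of $3$-regular matroids. Since there is a homomorphism from $\mathbb{U}_2$ to $\mathbb{U}_3$, every $2$-regular matroid is $3$-regular; hence, as $M$ is an excluded minor for the class of $3$-regular matroids, $M$ is not $3$-regular, and therefore not $2$-regular, while every proper minor of $M$ is $3$-regular. These two observations are the only structural inputs about $M$ that I will use.

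First I would invoke \cref{2regexminorconj}: as $M$ is not $2$-regular, it has a minor $N$ isomorphic to one of the seventeen excluded minors for the class of $2$-regular matroids. If $N \in \{U_{2,6},U_{3,6},U_{4,6},P_6\}$, then $M$ has a $\{U_{2,6},U_{3,6},U_{4,6},P_6\}$-minor and outcome (i) holds, so I would then assume that $M$ has no such minor. In that case $N$ cannot be any of these four matroids, so $N$ must be isomorphic to one of the thirteen matroids appearing in the list in (ii).

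The crux of the argument is then to verify that each of these thirteen matroids fails to be $3$-regular; I would establish this by appealing to the classification of the excluded minors for $2$-regular matroids by representability recorded in \cite[Table~5]{BP20}. For those not representable over $\GF(5)$ (such as $F_7$ and $F_7^*$), non-$3$-regularity is immediate, since every $3$-regular matroid is $\GF(5)$-representable via the homomorphisms from $\mathbb{U}_3 \cong \mathbb{H}_5$ down to $\mathbb{H}_1 = \GF(5)$ (using \cref{3reglemma}). For the remaining, $\GF(5)$-representable matroids on the list, one checks case by case that the matroid is not $\mathbb{H}_5$-representable: for those with a $\{U_{2,5},U_{3,5}\}$-minor this reduces, via \cref{hi} and \cref{3reglemma}, to confirming that the matroid has fewer than six inequivalent $\GF(5)$-representations. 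Granting this, since $N$ is a minor of $M$ that is not $3$-regular while every proper minor of $M$ is $3$-regular, $N$ cannot be a proper minor of $M$; hence $M \cong N$, and $M$ is one of the thirteen matroids in (ii).

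The main obstacle is precisely the representability bookkeeping of the previous paragraph: confirming, for all thirteen matroids, that none is $3$-regular. This is a finite verification with no conceptual novelty beyond \cref{hi}, \cref{3reglemma}, and the partial-field homomorphisms of \cref{prelims}, and I would offload it to \cite[Table~5]{BP20}, where the relevant counts of inequivalent $\GF(5)$-representations (or the outright failure of $\GF(5)$-representability) are already tabulated.
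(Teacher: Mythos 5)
Your proposal is correct and is essentially the argument the paper intends: the paper gives no formal proof, only the remark that the lemma ``follows from \cref{2regexminorconj}, by considering which of the excluded minors for $2$-regular matroids are $3$-regular (see \cite[Table~5]{BP20})'', and your write-up simply fills in the same reasoning (containment of $2$-regular in $3$-regular, the seventeen excluded minors, the fact that the thirteen non-uniform ones are not $3$-regular, and minor-minimality forcing $M\cong N$). No gaps.
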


The following is a 
consequence of the fact that $U_{2,5}$ is a strong $\mathbb{H}_5$-stabilizer \cite[Lemma 7.3.16]{vanZwam2009}.
\begin{lemma}
  The matroids $U_{2,6}$, $U_{3,6}$, $U_{4,6}$, and $P_6$ are strong $\mathbb{H}_5$-stabilizers. 
\end{lemma}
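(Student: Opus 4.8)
The plan is to derive all four cases from the single hypothesis that $U_{2,5}$ is a strong $\mathbb{H}_5$-stabilizer, by combining a duality observation with a propagation principle for strong stabilizers. First I would observe that $U_{3,5}=U_{2,5}^*$ is also a strong $\mathbb{H}_5$-stabilizer. The class of $\mathbb{H}_5$-representable matroids is closed under duality, and the ingredients in the definition of a strong stabilizer all dualise: scaling-equivalence is preserved under transposing a reduced representation, $3$-connectivity is self-dual, minors dualise, and an extension of $M$ corresponds to a coextension of $M^*$. Hence the property of being a strong $\mathbb{P}$-stabilizer is self-dual, and applying this to $U_{2,5}$ yields the claim for $U_{3,5}$.

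The central tool is the following propagation principle: if $N$ is a strong $\mathbb{P}$-stabilizer and $M$ is a $3$-connected $\mathbb{P}$-representable matroid with an $N$-minor, then $M$ is itself a strong $\mathbb{P}$-stabilizer. I would prove this directly from the definitions. For the stabilizer part, let $M'$ be $3$-connected with an $M$-minor (so also with an $N$-minor), and let $B_1'$ and $B_2'$ be scaling-equivalent $\mathbb{P}$-representations of $M$ that extend to representations $B_1$ and $B_2$ of $M'$; restricting to the $N$-minor gives scaling-equivalent representations of $N$ that extend to $B_1$ and $B_2$, so since $N$ stabilizes $M'$ we conclude $B_1$ and $B_2$ are scaling-equivalent. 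For the extension part, given a representation $B'$ of $M$ and a $3$-connected matroid $M'$ with an $M$-minor, the strong-stabilizer property of $N$ extends the restriction $B'|_N$ to a representation $B$ of $M'$; now $B|_M$ and $B'$ restrict to the same representation of $N$, so (as $N$ stabilizes $M$) they are scaling-equivalent, and transferring this scaling to all of $M'$ produces a representation of $M'$ extending $B'$.

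With these two ingredients, it remains to exhibit for each matroid a minor isomorphic to $U_{2,5}$ or $U_{3,5}$ and to note $\mathbb{H}_5$-representability. Concretely, $U_{2,6}\setminus e\cong U_{2,5}$, $U_{3,6}/e\cong U_{2,5}$, $U_{4,6}/e\cong U_{3,5}$, and $P_6/e\cong U_{2,5}$ when $e$ lies off the unique three-point line of $P_6$; I would note in passing that $U_{4,6}$ has no $U_{2,5}$-minor (any rank-$2$ minor has at most four elements), which is precisely why the dual stabilizer $U_{3,5}$ is needed. Each of the four matroids is $3$-connected. Moreover each is $3$-regular---$U_{2,6}$ because $\mathbb{U}_3$ is its universal partial field, $U_{4,6}$ by self-duality of $3$-regularity, and $U_{3,6}$ and $P_6$ by \cite[Table~5]{BP20}---and hence $\mathbb{H}_5$-representable by \cref{3reglemma}. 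Applying the propagation principle to each then completes the proof, consistently with the fact (used to derive \cref{u3minors}) that these are exactly the excluded minors for $2$-regular matroids that are $3$-regular.

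The step I expect to be the main obstacle is making the propagation principle fully rigorous, and in particular the lifting of scaling-equivalence from a minor to the ambient matroid: restricting a reduced representation of $M'$ to the relevant minor generally requires pivoting to align a basis of $M'$ with a basis of the minor, and one must check that the equivalence obtained on the minor is genuinely a scaling-equivalence that extends to the whole representation, rather than merely a projective equivalence. This is routine but bookkeeping-heavy, and it can be bypassed by citing the propagation principle directly from the stabilizer theory developed in \cite{GOVW1998} and \cite{vanZwam2009}.
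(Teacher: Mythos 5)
Your proposal is correct and takes essentially the same route as the paper, which states the lemma as a direct consequence of $U_{2,5}$ being a strong $\mathbb{H}_5$-stabilizer (van Zwam, Lemma 7.3.16) — i.e.\ exactly the duality-plus-propagation argument you spell out, with the standard fact that a $3$-connected $\mathbb{P}$-representable matroid with a strong-stabilizer minor is itself a strong stabilizer. The one detail worth keeping explicit is the point you already flag: $U_{4,6}$ has no $U_{2,5}$-minor, so the dual stabilizer $U_{3,5}$ is genuinely needed there.
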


We also require the following consequence of \cref{qflemma,tcnlemma,u3minors}.

\begin{lemma}
  \label{qfcorr}
  If $M$ is a \quadflower\ or a \tcn, then $M$ is not an excluded minor for the class of $3$-regular matroids.
\end{lemma}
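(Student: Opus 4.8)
The plan is to reduce \cref{qfcorr} to the representability classifications already established for \quadflower s and \tcns, namely \cref{qflemma,tcnlemma}, together with the excluded-minor list for $3$-regular matroids from \cref{u3minors}. The guiding observation is that an excluded minor for the class of $3$-regular matroids must itself fail to be $3$-regular, yet every proper minor of it must be $3$-regular; so it suffices to show that every \quadflower\ and every \tcn\ either is $3$-regular, or has a proper minor that is already not $3$-regular (and hence fails the minimality required of an excluded minor).

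First I would dispose of the \quadflower s. By \cref{qflemma}, a \quadflower\ $M$ falls into exactly one of four cases. If $M$ is a binary \quadflower\ (case (i)), then it is $\GF(2)$-representable, hence regular, hence $3$-regular, so it cannot be an excluded minor for the class of $3$-regular matroids. If $M$ is a dyadic \quadflower\ (case (ii)), it is dyadic and hence $3$-regular (dyadic matroids are near-regular, thus $3$-regular). If $M$ is a free \quadflower\ (case (iii)), the proof of \cref{qflemma} shows $M$ has an $F_7^=$-minor; since $F_7^=$ appears on the excluded-minor list in \cref{u3minors}(ii), $M$ has a proper minor (or equals a minor) that is not $3$-regular, so either $M$ itself is one of the listed excluded minors, or it has a proper non-$3$-regular minor. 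In either case $M$ is not an excluded minor for the class, provided $F_7^=$ is a proper minor — and since a free \quadflower\ has twelve elements while $F_7^=$ has seven, this is automatic. Finally, if $M$ is in case (iv), it has a proper minor that is not representable over any field, hence not $3$-regular, so $M$ is not an excluded minor.

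Next I would handle the \tcns\ using \cref{tcnlemma} in exactly the same spirit. A tight \tcn\ is $\GF(2)$-representable and thus regular and $3$-regular; a loose \tcn\ is dyadic and thus $3$-regular; and a slack \tcn\ is not representable over any field. In each of the first two cases $M$ is $3$-regular and so not an excluded minor. In the slack case, \cref{tcnlemma} in fact exhibits both an $F_7$- and an $F_7^-$-minor, either of which is a proper minor (again by the element count, twelve versus seven) that is not $3$-regular; so $M$ has a proper non-$3$-regular minor and cannot be an excluded minor.

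The only delicate point — and the main thing to get right — is the logical distinction between ``$M$ is $3$-regular'' and ``$M$ has a proper minor that is not $3$-regular'', since both conclusions suffice to rule out $M$ being an excluded minor but for different reasons. An excluded minor is a minor-minimal non-member of the class, so I must confirm in the non-representable and free cases that the offending $F_7$, $F_7^-$, or $F_7^=$ minor (or the all-field-non-representable minor supplied by case (iv)) is a \emph{proper} minor; this follows immediately because each \quadflower\ and each \tcn\ has twelve elements by \cref{quadflowerdef,tcndef}, whereas each relevant obstruction has at most seven elements. I expect no genuine obstacle here — the substantive work was already done in \cref{qflemma,tcnlemma} — so the proof is a short case analysis assembling those results.
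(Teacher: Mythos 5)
Your overall strategy (reduce to \cref{qflemma,tcnlemma} and check in each case that $M$ either is $3$-regular or has a proper non-$3$-regular minor) matches the paper's, and your handling of the free \quadflower, case (iv), and the slack \tcn\ is correct. However, four of your cases rest on false implications. You claim that a binary \quadflower\ (and a tight \tcn) is ``$\GF(2)$-representable, hence regular, hence $3$-regular''; but $\GF(2)$-representable does not imply regular --- $F_7$ itself is the standard counterexample --- and indeed a binary \quadflower\ has an $F_7$-minor, so it is representable only over fields of characteristic two and is certainly not $3$-regular (the $3$-regular partial field lives in $\mathbb{Q}(\alpha_1,\alpha_2,\alpha_3)$, and $F_7$ is an excluded minor for the class by \cref{u3minors}). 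Likewise you claim a dyadic \quadflower\ (and a loose \tcn) is ``dyadic, hence near-regular, thus $3$-regular''; dyadic matroids are not in general near-regular --- $F_7^-$ is dyadic but is an excluded minor for near-regular, $2$-regular, and $3$-regular matroids --- and a dyadic \quadflower\ has an $F_7^-$-minor, so it too is not $3$-regular.

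The good news is that your conclusion survives in all four of these cases, and the correct argument is already contained in the lemmas you cite: a binary, dyadic, or free \quadflower\ has $F_7$, $F_7^-$, or $F_7^=$ respectively as a proper minor, and a tight or loose \tcn\ has $F_7$ or $F_7^-$ as a proper minor; each of these seven-element matroids fails to be $3$-regular by \cref{u3minors}, so the twelve-element matroid $M$ is never minor-minimal. This is exactly the paper's proof. So the fix is to replace the claims of $3$-regularity by the ``proper non-$3$-regular minor'' argument that you already use, correctly, in the free and slack cases.
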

\begin{proof}
  If $M$ is a \tcn, then $M$ has either $F_7$ or $F_7^-$ as a proper minor, by \cref{tcnlemma}, so $M$ is not an excluded minor for the class of $3$-regular matroids, by \cref{u3minors}.
  Suppose that $M$ is a \quadflower\ that is an excluded minor for the class of $3$-regular matroids.
  Then every proper minor of $M$ is $3$-regular.
  By \cref{qflemma}, $M$ is either a binary \quadflower, a dyadic \quadflower, or a free \quadflower.
  But then $M$ has either $F_7$, $F_7^-$, or $F_7^=$ as a proper minor, respectively, contradicting \cref{u3minors}.
\end{proof}



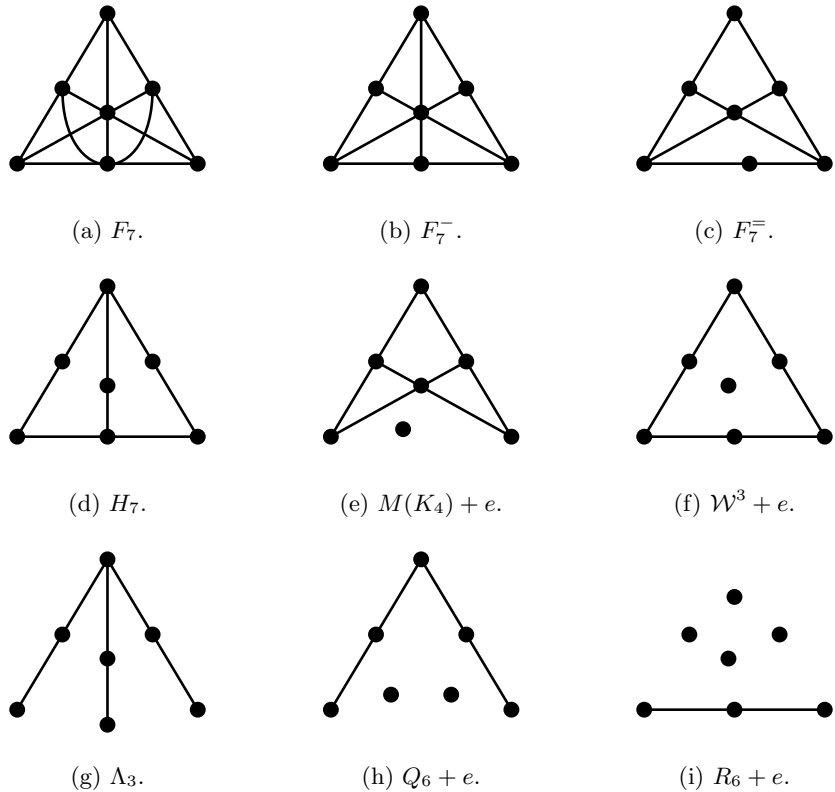
\begin{figure}[b]
  \begin{subfigure}{0.32\textwidth}
    \centering
    \begin{tikzpicture}[rotate=90,yscale=0.8,line width=1pt]
      \tikzset{VertexStyle/.append style = {minimum height=5,minimum width=5}}
      \clip (-0.5,-4.5) rectangle (2.5,-.5);
      \draw (0,-1) -- (2,-2.5) -- (0,-4);
      \draw (1,-1.75) -- (0,-4);
      \draw (1,-3.25) -- (0,-1);
      \draw (0,-1) -- (0,-4);
      \draw (2,-2.5) -- (0,-2.5);
      \draw (1,-3.25) .. controls (-.35,-3.25) and (-.35,-1.75) .. (1,-1.75);

      \SetVertexNoLabel
      \Vertex[x=2,y=-2.5]{a2}
      \Vertex[x=0.68,y=-2.5]{a3}
      \Vertex[x=1,y=-3.25]{a4}
      \Vertex[x=1,y=-1.75]{a5}
      \Vertex[x=0,y=-2.5]{d}
      \Vertex[x=0,y=-1]{e}
      \Vertex[x=0,y=-4]{f}
    \end{tikzpicture}
    \caption{$F_7$.}
  \end{subfigure}
  \begin{subfigure}{0.32\textwidth}
    \centering
    \begin{tikzpicture}[rotate=90,yscale=0.8,line width=1pt]
      \tikzset{VertexStyle/.append style = {minimum height=5,minimum width=5}}
      \clip (-0.5,-4.5) rectangle (2.5,-.5);
      \draw (0,-1) -- (2,-2.5) -- (0,-4);
      \draw (1,-1.75) -- (0,-4);
      \draw (1,-3.25) -- (0,-1);
      \draw (0,-1) -- (0,-4);
      \draw (2,-2.5) -- (0,-2.5);

      \SetVertexNoLabel
      \Vertex[x=2,y=-2.5]{a2}
      \Vertex[x=0.68,y=-2.5]{a3}
      \Vertex[x=1,y=-3.25]{a4}
      \Vertex[x=1,y=-1.75]{a5}
      \Vertex[x=0,y=-2.5]{d}
      \Vertex[x=0,y=-1]{e}
      \Vertex[x=0,y=-4]{f}
    \end{tikzpicture}
    \caption{$F_7^-$.}
  \end{subfigure}
  \begin{subfigure}{0.32\textwidth}
    \centering
    \begin{tikzpicture}[rotate=90,yscale=0.8,line width=1pt]
      \tikzset{VertexStyle/.append style = {minimum height=5,minimum width=5}}
      \clip (-0.5,-4.5) rectangle (2.5,-.5);
      \draw (0,-1) -- (2,-2.5) -- (0,-4);
      \draw (1,-1.75) -- (0,-4);
      \draw (1,-3.25) -- (0,-1);
      \draw (0,-1) -- (0,-4);

      \SetVertexNoLabel

      \Vertex[L=$a$,Lpos=0,LabelOut=true,x=2,y=-2.5]{a2}
      \Vertex[L=$g$,Lpos=90,LabelOut=true,x=0.68,y=-2.5]{a3}
      \Vertex[L=$e$,Lpos=0,LabelOut=true,x=1,y=-3.25]{a4}
      \Vertex[L=$d$,Lpos=180,LabelOut=true,x=1,y=-1.75]{a5}
      \Vertex[L=$f$,Lpos=-90,LabelOut=true,x=0,y=-2.75]{d}
      \Vertex[L=$b$,Lpos=180,LabelOut=true,x=0,y=-1]{e}
      \Vertex[L=$c$,Lpos=0,LabelOut=true,x=0,y=-4]{f}
    \end{tikzpicture}
    \caption{$F_7^=$.}
  \end{subfigure}
  \begin{subfigure}{0.32\textwidth}
    \centering
    \begin{tikzpicture}[rotate=90,yscale=0.8,line width=1pt]
      \tikzset{VertexStyle/.append style = {minimum height=5,minimum width=5}}
      \clip (-0.5,-4.5) rectangle (2.5,-.5);
      \draw (0,-1) -- (2,-2.5) -- (0,-4);
      \draw (0,-1) -- (0,-4);
      \draw (2,-2.5) -- (0,-2.5);

      \SetVertexNoLabel
      \Vertex[x=2,y=-2.5]{a2}
      \Vertex[x=0.68,y=-2.5]{a3}
      \Vertex[x=1,y=-3.25]{a4}
      \Vertex[x=1,y=-1.75]{a5}
      \Vertex[x=0,y=-2.5]{d}
      \Vertex[x=0,y=-1]{e}
      \Vertex[x=0,y=-4]{f}
    \end{tikzpicture}
    \caption{$H_7$.}
  \end{subfigure}
  \begin{subfigure}{0.32\textwidth}
    \centering
    \begin{tikzpicture}[rotate=90,yscale=0.8,line width=1pt]
      \tikzset{VertexStyle/.append style = {minimum height=5,minimum width=5}}
      \clip (-0.5,-4.5) rectangle (2.5,-.5);
      \draw (0,-1) -- (2,-2.5) -- (0,-4);
      \draw (1,-1.75) -- (0,-4);
      \draw (1,-3.25) -- (0,-1);

      \SetVertexNoLabel

      \Vertex[L=$a$,Lpos=0,LabelOut=true,x=2,y=-2.5]{a2}
      \Vertex[L=$g$,Lpos=90,LabelOut=true,x=0.68,y=-2.5]{a3}
      \Vertex[L=$e$,Lpos=0,LabelOut=true,x=1,y=-3.25]{a4}
      \Vertex[L=$d$,Lpos=180,LabelOut=true,x=1,y=-1.75]{a5}
      \Vertex[L=$f$,Lpos=-90,LabelOut=true,x=-25,y=-2.75]{d}
      \Vertex[L=$b$,Lpos=180,LabelOut=true,x=0,y=-1]{e}
      \Vertex[L=$c$,Lpos=0,LabelOut=true,x=0,y=-4]{f}
      \Vertex[x=0.1,y=-2.2]{z}
    \end{tikzpicture}
    \caption{$M(K_4)+e$.}
  \end{subfigure}
  \begin{subfigure}{0.32\textwidth}
    \centering
    \begin{tikzpicture}[rotate=90,yscale=0.8,line width=1pt]
      \tikzset{VertexStyle/.append style = {minimum height=5,minimum width=5}}
      \clip (-0.5,-4.5) rectangle (2.5,-.5);
      \draw (0,-1) -- (2,-2.5) -- (0,-4);
      \draw (0,-1) -- (0,-4);

      \SetVertexNoLabel
      \Vertex[x=2,y=-2.5]{a2}
      \Vertex[x=0.68,y=-2.4]{a3}
      \Vertex[x=1,y=-3.25]{a4}
      \Vertex[x=1,y=-1.75]{a5}
      \Vertex[x=0,y=-2.5]{d}
      \Vertex[x=0,y=-1]{e}
      \Vertex[x=0,y=-4]{f}
    \end{tikzpicture}
    \caption{$\mathcal{W}^3+e$.}
  \end{subfigure}
  \begin{subfigure}{0.32\textwidth}
    \centering
    \begin{tikzpicture}[rotate=90,yscale=0.8,line width=1pt]
      \tikzset{VertexStyle/.append style = {minimum height=5,minimum width=5}}
      \clip (-0.5,-4.5) rectangle (2.5,-.5);
      \draw (0,-1) -- (2,-2.5) -- (0,-4);
      \draw (2,-2.5) -- (-.2,-2.5);

      \SetVertexNoLabel
      \Vertex[x=2,y=-2.5]{a2}
      \Vertex[x=0.68,y=-2.5]{a3}
      \Vertex[x=1,y=-3.25]{a4}
      \Vertex[x=1,y=-1.75]{a5}
      \Vertex[x=-.2,y=-2.5]{d}
      \Vertex[x=0,y=-1]{e}
      \Vertex[x=0,y=-4]{f}
    \end{tikzpicture}
    \caption{$\Lambda_3$.}
  \end{subfigure}
  \begin{subfigure}{0.32\textwidth}
    \centering
    \begin{tikzpicture}[rotate=90,yscale=0.8,line width=1pt]
      \tikzset{VertexStyle/.append style = {minimum height=5,minimum width=5}}
      \clip (-0.5,-4.5) rectangle (2.5,-.5);
      \draw (0,-1) -- (2,-2.5) -- (0,-4);

      \SetVertexNoLabel
      \Vertex[x=2,y=-2.5]{a2}
      \Vertex[x=1,y=-3.25]{a4}
      \Vertex[x=1,y=-1.75]{a5}
      \Vertex[x=0,y=-1]{e}
      \Vertex[x=0,y=-4]{f}
      \Vertex[x=0.2,y=-2.0]{a3}
      \Vertex[x=0.2,y=-3.0]{d}
    \end{tikzpicture}
    \caption{$Q_6+e$.}
  \end{subfigure}
  \begin{subfigure}{0.32\textwidth}
    \centering
    \begin{tikzpicture}[rotate=90,yscale=0.8,line width=1pt]
      \tikzset{VertexStyle/.append style = {minimum height=5,minimum width=5}}
      \clip (-0.5,-4.5) rectangle (2.5,-.5);
      \draw (0,-1) -- (0,-4);

      \SetVertexNoLabel
      \Vertex[x=1.5,y=-2.5]{a2}
      \Vertex[x=0.68,y=-2.4]{a3}
      \Vertex[x=1,y=-3.25]{a4}
      \Vertex[x=1,y=-1.75]{a5}
      \Vertex[x=0,y=-2.5]{d}
      \Vertex[x=0,y=-1]{e}
      \Vertex[x=0,y=-4]{f}
    \end{tikzpicture}
    \caption{$R_6+e$.}
  \end{subfigure}
  \caption{Nine rank-$3$ excluded minors for the class of $3$-regular matroids.}
  \label{relaxationsfig}
\end{figure}

For a matroid $M$, let $M+e$ denote the matroid obtained from $M$ by a free single-element extension.
Consider the set of $10$ matroids that can be obtained from the Fano matroid $F_7$ by relaxing circuit-hyperplanes.
This gives the sequence $$\{F_7\}, \{F_7^-\}, \{F_7^=\}, \{H_7,M(K_4)+e\}, \{\mathcal{W}^3+e,\Lambda_3\}, \{Q_6+e\}, \{P_6+e\}, \{U_{3,7}\};$$ geometric representations of these matroids are given in \cref{relaxationsfig}.
These $10$ matroids give rise to nine $\Delta Y$-equivalence classes (the matroids $H_7$ and $M(K_4)+e$ are $\Delta Y$-equivalent). 
%
Each of these matroids turns out to be an excluded minor for the class of $3$-regular matroids.



\begin{theorem}
  \label{h5exminors}
  There are exactly $33$ matroids that are excluded minors for the class of $3$-regular matroids and have at most $13$ elements.  These are:
  $$F_7, F_7^-, F_7^=, H_7,M(K_4)+e, \mathcal{W}^3+e,\Lambda_3, Q_6+e, P_6+e, U_{3,7},$$ and their duals; $\Delta^{(*)}(U_{2,7})$; and
$\AG(2,3)\ba e$, $(\AG(2,3)\ba e)^*$, $(\AG(2,3)\ba e)^{\Delta Y}$, $P_8$, $P_8^-$, $P_8^=$, and $\TQ_8$.
\end{theorem}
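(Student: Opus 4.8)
The plan is to follow the computational strategy of \cite{BP20} and to split the excluded minors according to \cref{u3minors}; as is standard for representability classes, the excluded minors here are $3$-connected, so I restrict to $3$-connected matroids throughout. First I would dispose of the matroids arising in \cref{u3minors}(ii). Since the class of $2$-regular matroids is contained in the class of $3$-regular matroids, every proper minor of a $2$-regular excluded minor is $2$-regular, hence $3$-regular; thus any $2$-regular excluded minor that is not itself $3$-regular is automatically an excluded minor for the class of $3$-regular matroids. By \cref{2regexminorconj} and \cite[Table~5]{BP20}, exactly the thirteen matroids of \cref{u3minors}(ii) are of this type, namely $F_7,F_7^*,F_7^-,(F_7^-)^*,F_7^=,(F_7^=)^*$, the three matroids $\AG(2,3)\ba e$, $(\AG(2,3)\ba e)^*$, $(\AG(2,3)\ba e)^{\Delta Y}$, and $P_8,P_8^-,P_8^=,\TQ_8$. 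Each has at most eight elements and appears in the statement, accounting for thirteen of the thirty-three.

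It remains to find every excluded minor $M$ with a $\{U_{2,6},U_{3,6},U_{4,6},P_6\}$-minor and $|E(M)|\le 13$. Here I would use that each of these four seeds is a strong $\mathbb{H}_5$-stabilizer on six elements, and that a matroid is $\mathbb{H}_5$-representable precisely when it has an $F_\phi$-confined $\GF(3527)$-representation, via the proxy of \cref{proxytable}. Taking the four seeds, I would build the catalog of all $3$-connected $3$-regular matroids with a $\{U_{2,6},U_{3,6},U_{4,6},P_6\}$-minor by repeated splicing, working up one element at a time to thirteen elements; \cref{nearregconn} guarantees the single-element deletions or contractions preserving both $3$-connectivity and the minor that this requires, since no seed is near-regular. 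An excluded minor is then a $3$-connected non-$3$-regular matroid every proper minor of which is $3$-regular, and these surface as single-element extensions or coextensions of catalog matroids that fail the confined-representation test while all of their one-element deletions and contractions pass it. Having collected one representative of each, I would apply \cref{osvdelta} to close under duality and $\Delta$-$Y$ exchange, which produces the duals in the statement and the whole class $\Delta^{(*)}(U_{2,7})$.

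The crux, and the main obstacle, is completeness of the splicing enumeration, since \cref{detachthm} does not always furnish an $N$-detachable pair. The harmless case is \cref{detachthm}(ii), where a single $\Delta$-$Y$ or $Y$-$\Delta$ exchange yields a matroid with a detachable pair; as the catalog is closed under these exchanges the matroid is still reached. When \cref{detachthm}(iii) holds I would invoke \cref{spikelikes}: each seed is a non-binary $3$-connected strong stabilizer on six elements, so any excluded minor with a \spikey\ covering all but one element of $E(M)-E(N)$ satisfies $|E(M)|\le|E(N)|+5\le 11$, and therefore lies within the search range. When \cref{detachthm}(iv) holds, so $|E(M)|=12$ and $M$ is a \quadflower\ or a \tcn, I would appeal to \cref{qfcorr}, which shows directly that no such matroid is an excluded minor for the class of $3$-regular matroids. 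With these exceptional cases resolved, every relevant $3$-connected matroid of size at most thirteen is reached by splicing, so the search for excluded minors is exhaustive.

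Finally I would tally the output. The search returns the seven rank-$3$ matroids $H_7,M(K_4)+e,\mathcal{W}^3+e,\Lambda_3,Q_6+e,P_6+e,U_{3,7}$ and the rank-$2$ matroid $U_{2,7}$, each on seven elements; closing under duality and $\Delta$-$Y$ exchange gives their duals together with the six members of $\Delta^{(*)}(U_{2,7})$, contributing $14+6=20$ further matroids, none coinciding with the thirteen of \cref{u3minors}(ii). In total this gives exactly $33$ excluded minors, all of size at most eight, and the search confirms that none of size between nine and thirteen exists. I expect the principal difficulty to be computational rather than conceptual: certifying that the catalog of $3$-connected $3$-regular matroids is complete at each size, and in particular that no excluded minor is overlooked because of an exceptional $3$-separator, demands the careful bookkeeping and the finite but extensive case analysis outlined above.
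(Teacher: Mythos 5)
Your overall architecture (seed with the $\{U_{2,6},U_{3,6},U_{4,6},P_6\}$-minors, enumerate the $3$-connected $3$-regular catalog, detect excluded minors as extensions all of whose proper minors pass the proxy test, then close under duality and $\Delta Y$-equivalence, with \cref{spikelikes} and \cref{qfcorr} disposing of the exceptional outcomes of \cref{detachthm}) is the same as the paper's. But there is a genuine gap in your completeness argument at $n=11$. You keep the six-element seeds throughout and, when \cref{detachthm}(iii) arises, you conclude from \cref{spikelikes} that $|E(M)|\le |E(N)|+5\le 11$ and that $M$ therefore ``lies within the search range.'' That does not resolve anything: the danger in case (iii) is precisely that $M$ has no $N$-detachable pair and so is \emph{not constructed} by splicing, whatever its size. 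With $|E(N)|=6$ the bound $|E(M)|\le 11$ is consistent with $|E(M)|=11$, so an $11$-element excluded minor carrying a \spikey\ could be missed. The paper avoids this by changing the seeds at $n=11$ to $\{U_{2,5},U_{3,5}\}$ (every matroid in the catalog has such a minor), so that \cref{spikelikes} forces $|E(M)|\le 10<11$ and case (iii) is outright impossible; only at $n\in\{12,13\}$ does it revert to the six-element seeds, where $|E(N)|+5=11<n$ again gives a contradiction. Your proof needs this (or an equivalent) device.

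A second, smaller issue is that \cref{detachthm} has the hypothesis $|E(M)|\ge 11$, so splicing cannot be certified complete for $n\le 10$ by that theorem at all. The paper instead generates candidates at sizes $7$--$10$ as single-element extensions of catalog matroids, with completeness coming from \cref{nearregconn}; note that in that regime only closure under duality is needed, not $\Delta Y$-equivalence. Your proposal gestures at this (``single-element extensions or coextensions of catalog matroids'') but frames the whole enumeration as splicing from size $7$ to $13$, which conflates the two regimes. Your disposal of the thirteen matroids of \cref{u3minors}(ii), and your final tally of $13+14+6=33$, are fine.
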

\begin{proof}
  Let $\mathcal{U}^{(n)}$ be the set of all $n$-element $3$-connected $3$-regular matroids with a $\{U_{2,6},U_{3,6},U_{4,6},P_6\}$-minor.
  Using a computer, we exhaustively generated the matroids in $\mathcal{U}^{(n)}$, for each $n=6,7,8,\dotsc,13$ (see \cref{h5table}).
  By \cref{u3minors}, any excluded minor for the class of $3$-regular matroids has at least $7$ elements.
  Let $n \in \{7,8,9,10\}$, and suppose all excluded minors for the class on fewer than $n$ elements are known.
  We generated all $3$-connected single-element extensions of some matroid in $\mathcal{U}^{(n-1)}$; let $\mathcal{Z}^{(n)}$ be this collection of matroids.
  From the collection $\mathcal{Z}^{(n)}$, we filter out any matroids in $\mathcal{U}^{(n)}$, or any matroid containing, as a minor, one of the excluded minors for $3$-regular matroids on fewer than $n$ elements; call the resulting collection $\mathcal{Y}^{(n)}$.
  Clearly, any matroid in $\mathcal{Y}^{(n)}$ is an excluded minor for the class.
  In fact, if $M$ is an $n$-element excluded minor not listed in \cref{u3minors}(ii), then, by \cref{u3minors,nearregconn}, the collection $\mathcal{Y}^{(n)}$ contains at least one of $M$ and $M^*$, so the complete list of $n$-element excluded minors is obtained by closing $\mathcal{Y}^{(n)}$ under duality.
  Now, all excluded minors for the class on at most $10$ elements are known.

  Next consider when $n=11$.
  In order to utilise the contrapositive of \cref{spikelikes}, we consider $3$-connected matroids with a $\{U_{2,5},U_{3,5}\}$-minor, so that, when $|E(M)| = 11$ and $N \in \{U_{2,5},U_{3,5}\}$, we have $|E(M)|-|E(N)| = 6$.
  Let $\mathcal{V}^{(n)}$ be the set of all $n$-element $3$-connected $3$-regular matroids with a $\{U_{2,5},U_{3,5}\}$-minor.
  We exhaustively generated the matroids in $\mathcal{V}^{(n)}$, for each $n \le 11$ (see \cref{h5table2}).
  For any (not necessarily non-isomorphic) pair of matroids $M_e$ and $M_f$ in $\mathcal{V}^{(10)}$, where $M_e$ and $M_f$ are both single-element extensions of some matroid $M' \in \mathcal{V}^{(9)}$, we find the splices of $M_e$ and $M_f$.
  Let $\mathcal{Z}$ be the collection of all $11$-element matroids that can be obtained in this way.
  From the collection $\mathcal{Z}$, we filter out any matroids in $\mathcal{V}^{(11)}$, or any matroid containing, as a minor, one of the excluded minors for $3$-regular matroids on at most $10$ elements; call the resulting collection $\mathcal{Y}^{(11)}$.
  Clearly, any matroid in $\mathcal{Y}^{(11)}$ is an excluded minor for the class.
  In fact, if $M$ is an $11$-element excluded minor, then, by \cref{u3minors,detachthm}, either the collection $\mathcal{Y}^{(11)}$ contains a matroid that is $\Delta Y$-equivalent to $M$ or $M^*$, or $M$ has a \spikey.
  But the latter contradicts \cref{spikelikes}.
  Therefore, the complete list of $11$-element excluded minors is obtained by closing $\mathcal{Y}^{(11)}$ under duality and $\Delta Y$-equivalence.
  Now, all excluded minors for the class on at most $11$ elements are known.


  For $n \in \{12,13\}$, the approach is similar to when $n=11$, only we return to considering $3$-connected $3$-regular matroids with a $\{U_{2,6},U_{3,6},U_{4,6},P_6\}$-minor, for efficiency of the computations.
  Recall that $\mathcal{U}^{(n)}$ is the set of such matroids on $n$ elements, and we have exhaustively generated the matroids in $\mathcal{U}^{(n)}$ for $n \le 13$.
  Let $n \in \{12,13\}$, and assume we know all excluded minors for the class on $n-1$ elements.
  For any (not necessarily non-isomorphic) pair of matroids $M_e$ and $M_f$ in $\mathcal{U}^{(n-1)}$, where $M_e$ and $M_f$ are both single-element extensions of some matroid $M' \in \mathcal{U}^{(n-2)}$, we find the splices of $M_e$ and $M_f$.
  Let $\mathcal{Z}^{(n)}$ be the collection of all $n$-element matroids that can be obtained in this way.
  We note that $\mathcal{Z}^{(13)}$ contained 7550463 matroids with rank 6, and 6788774 matroids with rank 7.
  From the collection $\mathcal{Z}^{(n)}$, we filter out any matroids in $\mathcal{U}^{(n)}$, or any matroid containing, as a minor, one of the excluded minors for $3$-regular matroids on at most $n-1$ elements; call the resulting collection $\mathcal{Y}^{(n)}$.
  Clearly, any matroid in $\mathcal{Y}^{(n)}$ is an excluded minor for the class.
  In fact, if $M$ is an $n$-element excluded minor, then, by \cref{u3minors,detachthm,spikelikes}, either the collection $\mathcal{Y}^{(n)}$ contains a matroid that is $\Delta Y$-equivalent to either $M$ or $M^*$, or $n=12$ and $M$ is a \quadflower\ or a \tcn. 
  But the latter contradicts \cref{qfcorr}.
  Therefore, the complete list of $n$-element excluded minors is obtained by closing $\mathcal{Y}^{(n)}$ under duality and $\Delta Y$-equivalence.
  This completes the proof.
%
%
\end{proof}

\begin{table}[htbp]
  \begin{tabular}{r|r r r r r r r r r r}
    \hline
$r \ba n$ & 6 & 7 &  8 &   9 &   10 &    11 &    12 &     13 \\
    \hline
        2 & 1 &   &    &     &      &       &       &        \\
        3 & 2 & 2 &  3 &   4 &    3 &     2 &     1 &        \\
        4 & 1 & 2 & 15 &  63 &  160 &   252 &   294 &    238 \\
        5 &   &   &  3 &  63 &  572 &  2969 &  9701 &  21766 \\
        6 &   &   &    &   4 &  160 &  2969 & 29288 & 172233 \\
        7 &   &   &    &     &    3 &   252 &  9701 & 172233 \\
        8 &   &   &    &     &      &     2 &   294 &  21766 \\
        9 &   &   &    &     &      &       &     1 &    238 \\
       \hline                                           
    Total & 4 & 4 & 21 & 134 & 264 & 6446 & 49280 & 388474 \\
       \hline
  \end{tabular}
  \caption{The number of $3$-connected $3$-regular $n$-element rank-$r$ matroids with a $\{U_{2,6},U_{3,6},U_{4,6},P_6\}$-minor, for $n \le 13$.}
  \label{h5table}
\end{table}

\begin{table}[hbtp]
  \begin{tabular}{r|r r r r r r r r r r}
    \hline
$r \ba n$ & 5 & 6 & 7 &  8 &   9 &   10 &    11 \\ 
    \hline
        2 & 1 & 1 &   &    &     &      &       \\ 
        3 & 1 & 3 & 4 &  7 &   7 &    5 &     2 \\ 
        4 &   & 1 & 4 & 32 & 125 &  273 &   384 \\ 
        5 &   &   &   &  7 & 125 & 1074 &  5125 \\ 
        6 &   &   &   &    &   7 &  273 &  5125 \\ 
        7 &   &   &   &    &     &    5 &   384 \\ 
        8 &   &   &   &    &     &      &     2 \\ 
        9 &   &   &   &    &     &      &       \\ 
       10 &   &   &   &    &     &      &       \\ 
       \hline                                           
    Total & 2 & 5 & 8 & 46 & 264 & 1630 & 11022 \\ 
       \hline
  \end{tabular}
  \caption{The number of $3$-connected $3$-regular $n$-element rank-$r$ matroids with a $\{U_{2,5},U_{3,5}\}$-minor, for $n \le 11$.}
  \label{h5table2}
\end{table}


Comparing with the excluded minors for $2$-regular matroids, note that instead of $U_{2,6}$, $U_{3,6}$, $U_{4,6}$ and $P_6$, the matroids that appear as excluded minors for $3$-regular matroids are either in $\Delta^{(*)}(U_{2,7})$, or can be obtained from $F_7$ by relaxing zero or more circuit-hyperplanes and then possibly dualising.

In \cref{h5upfs}, we show the excluded minors for $3$-regular matroids, grouped by $\Delta Y$-equivalence classes, and, for those that are $\GF(5)$-representable, we provide the largest integer $i$ for which they are $\mathbb{H}_i$-representable.
We also specify their universal partial field, if this partial field is well known. 

\begin{table}[htbp]
  \begin{tabular}{c c c c}
    \hline
    $M$ & $\mathbb{P}_M$ & $\max\{i : M \in \mathcal{M}(\mathbb{H}_i)\}$ & $\left|\Delta(M)\right|$ \\ 
    \hline
    $U_{2,7}$   & $\mathbb{U}_4$        & -- & 6 \\
    $F_7$       & $\GF(2)$              & -- & 2 \\
    $F_7^-$     & $\mathbb{D}$          & 2  & 2 \\
    $F_7^=$     & $\mathbb{K}_2$        & 2  & 2 \\ 
    $M(K_4)+e$ & & 4 & 4 \\
    $\mathcal{W}^3+e$ & & 2 & 2 \\
    $\Lambda_3$ & & 4 & 2 \\
    $Q_6+e$ & & 2 & 2 \\
    $P_6+e$ & & -- & 2 \\
    $U_{3,7}$ & & -- & 2 \\
    \hline
    $\AG(2,3)\ba e$ & $\mathbb{S}$          & -- & 3 \\
    $P_8$       & $\mathbb{D}$          & 2  & 1 \\
    $P_8^-$     & $\mathbb{K}_2$        & 2  & 1 \\ 
    $P_8^=$     & $\mathbb{H}_4$  & 4  & 1 \\
    $\TQ_8$     & $\mathbb{K}_2$        & 2  & 1 \\ 
    \hline
  \end{tabular}
  \caption{The excluded minors for 
	  the class of $3$-regular matroids on at most $13$ elements, and their universal partial fields.  We list one representative~$M$ of each $\Delta Y$-equivalence class $\Delta(M)$.}
  \label{h5upfs}
\end{table}

Recall that the following was proved by Brettell, Oxley, Semple and Whittle~\cite{BOSW2023b}.

\begin{theorem}[{\cite[Theorem 1.1]{BOSW2023b}}]
  \label{boswthm2}
  An excluded minor for the class of $3$-regular matroids has at most $15$ elements.
\end{theorem}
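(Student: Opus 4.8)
The plan is to bound $|E(M)|$ by feeding the detachable-pairs structure theorem (\cref{detachthm}) into the strong-stabilizer machinery, in the same spirit as the computational argument proving \cref{h5exminors}, but pushed until a large $M$ yields a purely structural contradiction. Throughout I work with $\mathbb{H}_5$, since $3$-regular $=$ $\mathbb{H}_5$-representable by \cref{3reglemma}.

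First I would make two reductions. A standard argument (the class of $\mathbb{H}_5$-representable matroids is closed under $1$- and $2$-sums, so a minor-minimal non-member is $3$-connected) lets me assume $M$ is $3$-connected. Next, by \cref{u3minors}, either $M$ is one of the sporadic matroids in \cref{u3minors}(ii) — each on at most $8$ elements — or $M$ has a $\{U_{2,6},U_{3,6},U_{4,6},P_6\}$-minor. A short finite check shows each of $U_{2,6}$, $U_{3,6}$, $U_{4,6}$, and $P_6$ has a minor isomorphic to $U_{2,5}$ or $U_{3,5}$ (for instance $P_6 \ba e \cong U_{3,5}$ for a point $e$ on its unique $3$-point line), so in the remaining case $M$ has a $\{U_{2,5},U_{3,5}\}$-minor. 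Fix $N \in \{U_{2,5},U_{3,5}\}$ with $M$ having an $N$-minor; recall $U_{2,5}$ is a strong $\mathbb{H}_5$-stabilizer, and since strong stabilizers are closed under duality and $U_{3,5}=U_{2,5}^*$, the matroid $N$ is a strong $\mathbb{H}_5$-stabilizer.

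Now suppose for contradiction that $|E(M)| \ge 16$. Then $|E(M)| \ge 11$, $|E(N)| = 5$, and $|E(M)|-|E(N)| \ge 11 \ge 5$, so \cref{detachthm} applies. Outcome (iv) requires $|E(M)| = 12$ and is excluded; outcome (iii) forces a \spikey, whence \cref{spikelikes} gives $|E(M)| \le |E(N)|+5 = 10$, again a contradiction. In outcome (ii), a single \dY\ or \Yd\ exchange yields $M' \in \Delta^*(M)$ having an $N$-minor and an $N$-detachable pair; by \cref{osvdelta}, $M'$ is also an excluded minor with $|E(M')| = |E(M)|$, so I would replace $M$ by $M'$. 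In every surviving case $M$ has an $N$-detachable pair $\{a,b\}$, and after possibly dualising, $M \ba a \ba b$ is $3$-connected with an $N$-minor.

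The crux is to convert this detachable pair into a contradiction. Since $M$ is an excluded minor, $M \ba a$, $M \ba b$, and $M \ba a \ba b$ are all $\mathbb{H}_5$-representable. I would fix an $\mathbb{H}_5$-representation of $N$; because $N$ is a strong stabilizer and $M \ba a \ba b$ is $3$-connected with an $N$-minor, this representation extends essentially uniquely to $M \ba a \ba b$, and then to $M \ba a$ (restoring $b$) and to $M \ba b$ (restoring $a$), with the two extensions agreeing on their common restriction $M \ba a \ba b$. This exhibits $M$ as a splice of the $\mathbb{H}_5$-representable matroids $M \ba a$ and $M \ba b$ carrying compatible representations, and the goal is to show the glued matrix represents $M$ itself, contradicting non-representability. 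I expect the main obstacle to be exactly this splice-consistency: the subdeterminants involving both the $a$- and the $b$-lines are not pinned down by the two sub-representations, so the glue certifies $M$ only after a careful analysis of how $\{a,b\}$ sits inside $M$, and one must rule out the residual configurations in which the two local representations fail to combine. Bounding the size in precisely those residual cases is where the numerical estimate is extracted, and the looseness of that estimate is, I believe, why the resulting bound is $15$ rather than the value $13$ ultimately confirmed by \cref{h5exminors}.
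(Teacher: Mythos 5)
First, note that the paper does not prove this statement at all: \cref{boswthm2} is imported verbatim as Theorem~1.1 of the cited reference \cite{BOSW2023b}, so there is no ``paper's own proof'' to match your argument against. Judged on its own terms, your set-up is sound as far as it goes --- the reduction to a $3$-connected $M$ with a $\{U_{2,5},U_{3,5}\}$-minor via \cref{u3minors}, the use of the strong-stabilizer property of $U_{2,5}$ and its dual, and the case analysis of \cref{detachthm} with outcomes (iii) and (iv) disposed of by \cref{spikelikes} and the size restriction, together with \cref{osvdelta} to absorb outcome (ii). But all of that only delivers an $N$-detachable pair $\{a,b\}$; it is the same reduction the paper uses in the proof of \cref{h5exminors} to guarantee that a putative excluded minor is \emph{found} by the splicing search, not to show that it cannot exist.

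The genuine gap is the final step, and it is not a detail: it is the entire content of the theorem. Having compatible $\mathbb{H}_5$-representations of $M \ba a$ and $M \ba b$ that agree on $M \ba a \ba b$ does \emph{not} imply that the glued matrix represents $M$ --- if it did, your argument would show there are no excluded minors beyond the spiky and $12$-element exceptional cases at all, which is false (the class has excluded minors, and nothing in your gluing step uses the hypothesis $|E(M)| \ge 16$ rather than, say, $|E(M)| \ge 11$). The point at which the bound $15$ actually enters is precisely the analysis of the ``incriminating'' certificates witnessing that the combined matrix fails to represent $M$: one must show that when $M$ is large enough, a deletion pair can be chosen (using connectivity, the stabilizer structure, and considerably more machinery than a single detachable pair) so that no such certificate survives. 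Your proposal explicitly defers this --- ``one must rule out the residual configurations \dots\ bounding the size in precisely those residual cases is where the numerical estimate is extracted'' --- which is to say it defers the proof. The argument in \cite{BOSW2023b} occupies a long paper built around exactly that analysis (in the tradition of the $\GF(4)$ excluded-minor proof), and the $N$-detachable pairs theorem \cref{detachthm} is one tool within it, not a substitute for it.
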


It is natural to ask the viability of using the techniques here (and in \cite{BP20}) to find the excluded minors for $3$-regular matroids on up to $15$ elements, in order to obtain a full excluded-minor characterisation for the class. 

For obtaining the excluded minors for the class of $2$-regular matroids, an important optimisation used in \cite{BP20} is that any such excluded minor with at least $9$ elements is $\GF(4)$-representable, for otherwise it would also be an excluded minor for the class of $\GF(4)$-representable matroids (the largest of which is known to have $8$ elements~\cite{GGK2000}).
Thus, there one can first find certain $\GF(211)$-representable matroids (as a ``proxy'' for $2$-regular matroids), and then the potential excluded minors are $\GF(4)$-representable extensions (or splices) of these matroids.
The same cannot be said for $3$-regular matroids, where we consider certain $\GF(3527)$-representable matroids (as a ``proxy'' for $3$-regular matroids), and then must consider \emph{all} extensions (or splices) of these matroids.

The upshot is, using more time or computing resources, one could probably extend \cref{h5exminors} to matroids on 14 elements, but then another big step in computational power would still be required to tackle matroids on $15$ elements.
A better approach seems some sort of compromise between an exhaustive computation and developing the theory in order to decrease the bound, as in \cite{BOSW2023b}: that is, a computation that exploits structural results.


\section{Excluded minors for \texorpdfstring{$\mathbb{H}_i$}{Hi}-representable matroids when \texorpdfstring{$i \in \{1,2,3,4\}$}{i is 1, 2, 3, or 4}}
\label{hmidsec}

In this section, we use the following notation.
For $i \in \{1,2,3,4,5\}$, let $\mathcal{X}_i$ be the set of excluded minors for the class of $\mathbb{H}_i$-representable matroids.
For $i \in \{1,2,3,4\}$, let $\mathcal{N}_i$ be the set of matroids in $\mathcal{X}_{i+1}$ that are $\mathbb{H}_i$-representable.
Using this notation, we have:

\begin{lemma}
  Let $M \in \mathcal{X}_i$ for some $i \in \{1,2,3,4\}$.
  Then either
  \begin{enumerate}
    \item $M$ has an $\mathcal{N}_i$-minor, or
    \item $M \in \mathcal{X}_{i+1}$.
  \end{enumerate}
\end{lemma}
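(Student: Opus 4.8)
The plan is to exploit the nesting of the representability classes that is induced by the homomorphism from $\mathbb{H}_{i+1}$ to $\mathbb{H}_i$, and then to run a straightforward minor-minimality argument. First I would record the class containment $\mathcal{M}(\mathbb{H}_{i+1}) \subseteq \mathcal{M}(\mathbb{H}_i)$: since there is a homomorphism $\mathbb{H}_{i+1} \to \mathbb{H}_i$, every $\mathbb{H}_{i+1}$-representable matroid is $\mathbb{H}_i$-representable. In particular, contrapositively, a matroid that fails to be $\mathbb{H}_i$-representable also fails to be $\mathbb{H}_{i+1}$-representable.

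Now fix $M \in \mathcal{X}_i$. By definition $M$ is not $\mathbb{H}_i$-representable, so by the containment above $M$ is not $\mathbb{H}_{i+1}$-representable either; moreover every proper minor of $M$ is $\mathbb{H}_i$-representable. The argument then splits according to whether every proper minor of $M$ is $\mathbb{H}_{i+1}$-representable. If it is, then, together with the fact that $M$ itself is not $\mathbb{H}_{i+1}$-representable, this is exactly the statement that $M \in \mathcal{X}_{i+1}$, giving conclusion (ii).

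Otherwise, $M$ has a proper minor that is not $\mathbb{H}_{i+1}$-representable. Among all such proper minors I would choose one, say $N$, that is minor-minimal. Since $N$ is a proper minor of $M$ we have $|E(N)| < |E(M)|$, so every proper minor of $N$ is in turn a proper minor of $M$; hence, by minimality of $N$, every proper minor of $N$ is $\mathbb{H}_{i+1}$-representable, while $N$ itself is not. Thus $N \in \mathcal{X}_{i+1}$. On the other hand, $N$ is a proper minor of $M \in \mathcal{X}_i$, so $N$ is $\mathbb{H}_i$-representable. Combining these, $N$ is a member of $\mathcal{X}_{i+1}$ that is $\mathbb{H}_i$-representable, that is, $N \in \mathcal{N}_i$; hence $M$ has an $\mathcal{N}_i$-minor, giving conclusion (i).

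I do not expect any genuine obstacle here; the result is essentially bookkeeping. The two points to verify with care are the class containment from the homomorphism (so that non-$\mathbb{H}_i$-representability upgrades to non-$\mathbb{H}_{i+1}$-representability), and that the minor-minimal non-$\mathbb{H}_{i+1}$-representable proper minor $N$ genuinely lies in $\mathcal{X}_{i+1}$, which relies on the observation that proper minors of $N$ are themselves proper minors of $M$. The dichotomy in the statement is then simply the case split on whether $M$ already witnesses its own membership in $\mathcal{X}_{i+1}$.
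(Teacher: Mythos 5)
Your argument is correct and is exactly the routine reasoning the paper has in mind: the paper states this lemma without proof, treating it as an immediate consequence of the homomorphism $\mathbb{H}_{i+1}\to\mathbb{H}_i$ (so $\mathcal{M}(\mathbb{H}_{i+1})\subseteq\mathcal{M}(\mathbb{H}_i)$) together with the standard fact that a non-$\mathbb{H}_{i+1}$-representable matroid contains a member of $\mathcal{X}_{i+1}$ as a minor. Your case split and minor-minimality argument fill in that omitted bookkeeping correctly.
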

\noindent
Moreover, the matroids in $\mathcal{N}_i$ are strong $\mathbb{H}_i$-stabilizers by \cref{hydrastabs}.

We consider now the computation of matroids in $\mathcal{X}_i$ on at most $10$ elements, for $i\in \{1,2,3,4\}$.
We first consider when $i=4$, in which case,
%
using \cref{h5exminors,h5upfs}, we can start from the following:
\begin{lemma}
  \label{h4minors}
  Suppose $M \in \mathcal{X}_4$ and $|E(M)| \le 13$.
  Then, either
  \begin{enumerate}
    \item $M$ or $M^*$ has a minor in $\{P_8^=,\Lambda_3,H_7,M(K_4)+e\}$, or
    \item $M$ is isomorphic to one of $26$ matroids in $\mathcal{X}_5$; namely, $M$ or $M^*$ is isomorphic to a matroid in
\begin{multline*}
  \{F_7, F_7^-, F_7^=, U_{3,7}, \mathcal{W}^3+e, Q_6+e, P_6+e, \\
    \AG(2,3)\ba e, (\AG(2,3)\ba e)^{\Delta Y}, P_8, P_8^-, \TQ_8\} \cup \Delta(U_{2,7}).
\end{multline*}
  \end{enumerate}
\end{lemma}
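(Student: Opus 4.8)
The plan is to leverage the homomorphism from $\mathbb{H}_5$ to $\mathbb{H}_4$ to reduce the classification of the relevant members of $\mathcal{X}_4$ to the already-determined set $\mathcal{X}_5$. Since such a homomorphism exists, every $\mathbb{H}_5$-representable matroid is $\mathbb{H}_4$-representable, so $\mathcal{M}(\mathbb{H}_5) \subseteq \mathcal{M}(\mathbb{H}_4)$. Let $M \in \mathcal{X}_4$ with $|E(M)| \le 13$. As $M$ is not $\mathbb{H}_4$-representable, the inclusion gives that $M$ is not $\mathbb{H}_5$-representable either; and since $\mathbb{H}_5$-representability is minor-closed, $M$ has a minor $N' \in \mathcal{X}_5$, where possibly $N' = M$. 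Because $|E(N')| \le |E(M)| \le 13$, \cref{h5exminors} shows that $N'$ is isomorphic to one of the $33$ excluded minors for the class of $3$-regular (equivalently, $\mathbb{H}_5$-representable) matroids.

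Next I would read off from \cref{h5upfs} which of these $33$ matroids are $\mathbb{H}_4$-representable; these are exactly the members of $\mathcal{N}_4$. Here I use that $\mathbb{H}_4$-representability is preserved under duality and $\Delta Y$-exchange --- equivalently, the universal partial field is a $\Delta^*$-class invariant --- which is what lets \cref{h5upfs} record a single value of $\max\{i : M \in \mathcal{M}(\mathbb{H}_i)\}$ per $\Delta Y$-equivalence class. The classes with this value at least $4$ are precisely those of $M(K_4)+e$, $\Lambda_3$, and $P_8^=$; using the column $|\Delta(M)|$, and the fact that each listed class is self-dual (so the row totals sum to $33$), these account for $4 + 2 + 1 = 7$ matroids. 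Thus
\[
  \mathcal{N}_4 = \Delta^*(M(K_4)+e) \cup \Delta^*(\Lambda_3) \cup \{P_8^=\},
\]
and the remaining $33 - 7 = 26$ matroids are exactly those that are not $\mathbb{H}_4$-representable. A routine check against \cref{h5upfs} confirms that, closed under duality, these $26$ are precisely the matroids listed in alternative (ii).

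With $\mathcal{N}_4$ in hand, the proof concludes by a dichotomy on $N'$. If $N'$ is $\mathbb{H}_4$-representable, then $N' \in \mathcal{N}_4$, so $M$ has an $\mathcal{N}_4$-minor. Since $H_7 \in \Delta(M(K_4)+e)$ and each listed class is self-dual, one checks that $\{P_8^=,\Lambda_3,H_7,M(K_4)+e\}$ together with its duals is exactly $\mathcal{N}_4$; hence $M$ or $M^*$ has a minor in $\{P_8^=,\Lambda_3,H_7,M(K_4)+e\}$, giving alternative (i). If instead $N'$ is not $\mathbb{H}_4$-representable, then $N'$ is a non-$\mathbb{H}_4$-representable minor of $M$; but every proper minor of $M$ is $\mathbb{H}_4$-representable because $M \in \mathcal{X}_4$, so $N' = M$. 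Therefore $M \in \mathcal{X}_5$ is one of the $26$ non-$\mathbb{H}_4$-representable matroids, giving alternative (ii).

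The main obstacle is the middle step: one must justify that $\mathbb{H}_4$-representability really is constant on each $\Delta Y$-equivalence class and closed under duality, so that the per-class data of \cref{h5upfs} classify all $33$ matroids correctly. Given that, the rest is bookkeeping --- confirming the two counts ($7$ and $26$) and verifying that the four matroids in alternative (i) represent all of $\mathcal{N}_4$ up to duality (which hinges on $H_7$ lying in $\Delta(M(K_4)+e)$ and on the $\Delta Y$-partner of $\Lambda_3$ being its dual). Everything else follows formally from $\mathcal{M}(\mathbb{H}_5) \subseteq \mathcal{M}(\mathbb{H}_4)$, minor-closedness, and the definition of $\mathcal{X}_4$.
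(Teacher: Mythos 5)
Your proposal is correct and follows the same route the paper intends: the paper states this lemma without an explicit proof, presenting it as a direct consequence of \cref{h5exminors} and \cref{h5upfs} together with the general dichotomy ($M$ has an $\mathcal{N}_4$-minor or $M \in \mathcal{X}_5$) that the paper records at the start of \cref{hmidsec}. Your writeup simply makes that implicit argument explicit, including the necessary (and correct) bookkeeping that the seven $\mathbb{H}_4$-representable members of $\mathcal{X}_5$ are exactly the dual closure of $\{P_8^=,\Lambda_3,H_7,M(K_4)+e\}$ and that representability over a partial field is invariant under duality and $\Delta$-$Y$ exchange.
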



\begin{theorem}
  \label{thmh4}
  There are $63$ excluded minors for $\mathbb{H}_4$-representability on at most $10$ elements.
\end{theorem}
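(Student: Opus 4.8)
The plan is to run the same inductive, computer-assisted procedure used for \cref{h5exminors}, now one layer up the Hydra hierarchy with $\mathbb{H}_4$ replacing $\mathbb{H}_5$, and to organise the excluded minors according to the dichotomy supplied by \cref{h4minors}. The seed set is $\mathcal{N}_4$, the collection of $\mathbb{H}_4$-representable members of $\mathcal{X}_5$; by \cref{hydrastabs} each such matroid is a strong $\mathbb{H}_4$-stabilizer, and by \cref{h5exminors} together with \cref{h5upfs} these are the seven matroids making up the $\Delta Y$-classes of $M(K_4)+e$, $\Lambda_3$, and $P_8^=$ (note $H_7 \in \Delta(M(K_4)+e)$). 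Since near-regular matroids are $\mathbb{H}_5$-representable, no member of $\mathcal{X}_5$, and in particular no seed, is near-regular; thus \cref{nearregconn} applies to each seed.

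By \cref{h4minors}, every $M \in \mathcal{X}_4$ with $|E(M)| \le 10$ either is one of $26$ named matroids, or has, in $M$ or $M^*$, a minor in $\{P_8^=,\Lambda_3,H_7,M(K_4)+e\}$. First I would dispose of the former case: these $26$ matroids are exactly $\mathcal{X}_5 \setminus \mathcal{N}_4$ and all have at most eight elements. Each is automatically an excluded minor for $\mathbb{H}_4$, since every proper minor is $\mathbb{H}_5$-representable, hence $\mathbb{H}_4$-representable through the homomorphism $\mathbb{H}_5 \to \mathbb{H}_4$, while $M$ itself is not $\mathbb{H}_4$-representable. Moreover none of these $26$ has an $\mathcal{N}_4$-minor: such a minor would be a non-$\mathbb{H}_5$-representable proper minor of a matroid in $\mathcal{X}_5$, which cannot occur. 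Hence this case contributes exactly $26$ excluded minors, disjoint from those found in the remaining case.

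For the remaining case, I would enumerate computationally, as in the $n \le 10$ portion of \cref{h5exminors}. Using the proxy $(\GF(947),\phi)$ for $\mathbb{H}_4$ from \cref{proxytable}, build a catalog of the $3$-connected $\mathbb{H}_4$-representable matroids having a minor among the four seeds, growing it by $3$-connected single-element extensions; \cref{nearregconn} ensures that every such matroid, or its dual, is reached. At each size $n$ from the seeds up to $10$, among the $3$-connected extensions of catalog members, flag those that are not $\mathbb{H}_4$-representable (tested via the proxy) and that contain no already-found excluded minor as a proper minor. Each flagged matroid is an excluded minor for $\mathbb{H}_4$, and, by \cref{h4minors} together with \cref{nearregconn}, every such excluded minor appears as one of $M$ or $M^*$; closing the flagged list under duality therefore yields them all. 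Combining with the $26$ matroids from the first case and counting isomorphism classes gives the total of $63$.

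The main obstacle is not a single structural lemma but guaranteeing the correctness and completeness of the enumeration: one must check that the proxy faithfully decides $\mathbb{H}_4$-representability for the matrices that arise, that the single-element-extension procedure justified by \cref{nearregconn} truly reaches every relevant $3$-connected matroid, and that the filtering against the expanding list of smaller excluded minors is exhaustive and correctly closed under duality. Because we stop at ten elements, the heavier machinery required for sizes $11$--$13$ in \cref{h5exminors} --- splicing, $N$-detachable pairs, and the exceptional \quadflower\ and \tcn\ analysis --- is unnecessary, so the difficulty lies mainly in the careful book-keeping across the larger seed set $\mathcal{N}_4$ and the longer excluded-minor list.
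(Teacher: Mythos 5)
Your proposal is correct and follows essentially the same route as the paper: invoke \cref{h4minors} to split the excluded minors into the $26$ members of $\mathcal{X}_5$ that are not $\mathbb{H}_4$-representable and those with an $\mathcal{N}_4$-minor, then exhaustively generate the $3$-connected $\mathbb{H}_4$-representable matroids with an $\mathcal{N}_4$-minor (using that the seeds are strong stabilizers by \cref{hydrastabs}), filter the single-element extensions, and close under duality via \cref{nearregconn}. The extra detail you give on the proxy and on why the $26$ sporadic matroids are automatically excluded minors is consistent with, and slightly more explicit than, the paper's own argument.
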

\begin{proof}
  Let $M \in \mathcal{X}_4$ with $|E(M)| \le 10$.
  By \cref{h4minors}, if $M$ is not one of the 26 matroids in \cref{h4minors}(ii), then $M$ has an $\mathcal{N}_4$-minor, where $\mathcal{N}_4$ is obtained by closing $\{P_8^=,\Lambda_3,H_7,M(K_4)+e\}$ under duality.

  Let $\mathcal{H}_4^{(n)}$ denote the $n$-element $3$-connected $\mathbb{H}_4$-representable matroids with an $\mathcal{N}_4$-minor.
  Starting from each matroid $N \in \mathcal{N}_4$, and using the fact that $N$ is a strong $\mathbb{H}_4$-stabilizer (by \cref{hydrastabs}), we exhaustively generated the matroids in $\mathcal{H}_4^{(n)}$, for $n=7,8,9,10$, see \cref{h4table}.
  Suppose all matroids in $\mathcal{X}_4$ on fewer than $n$ elements are known, for some $n \le 10$.
  We generated all $3$-connected single-element extensions of some matroid in $\mathcal{H}^{(n-1)}_4$, then removed any matroids in $\mathcal{H}^{(n)}_4$, or any containing, as a minor, an excluded minor for the class of $\mathbb{H}_4$-representable matroids on fewer than $n$ elements.
  It follows from \cref{nearregconn} that, after closing the resulting set of matroids under duality, we obtain all the matroids in $\mathcal{X}_4$ having at most $n$ elements.
\end{proof}

\begin{table}[hbtp]
  \begin{tabular}{r|r r r r}
    \hline
$r \ba n$ & 7 &  8 &   9 &   10 \\ 
    \hline
        3 & 3 &  3 &   4 &    3 \\ 
        4 & 3 & 24 & 127 &  408 \\ 
        5 &   &  3 & 127 & 1747 \\ 
        6 &   &    &   4 &  408 \\ 
        7 &   &    &     &    3 \\ 
       \hline                      
       Total & 6 & 30 & 262 & 2569 \\ 
       \hline
  \end{tabular}
  \caption{The number of $3$-connected $\mathbb{H}_4$-representable $n$-element rank-$r$ matroids with a $\mathcal{N}_4$-minor, for $n \le 10$.}
  \label{h4table}
\end{table}

For the next layer of the hierarchy, the question becomes: which of the 63 excluded minors are $\mathbb{H}_3$-representable?
It turns out that 20 of them are, as given in the next lemma.  We provide $\mathbb{H}_3$-representations for these matroids in the appendix.

Note that some of these matroids are related to the well-known Pappus matroid~\cite[Page~655]{oxley}, which we denote as $\pappus$.
After deleting an element from $\pappus$, the resulting matroid is unique up to isomorphism; we denote this matroid as $\pappus \ba e$.
The matroid $\pappus \ba e$ has a pair of free bases (see \cite[Lemma~2.1]{CW18}); tightening either one of these results in a matroid that we denote $(\pappus \ba e)^+$ (the two possible matroids are isomorphic).




\begin{lemma}
  \label{h3minors}
  If $N \in \mathcal{N}_3$ and $E(N) \le 10$, then $N$ is one of the $20$ matroids obtained by closing the set
      $$\Delta(\{\pappus \ba e, (\pappus \ba e)^+, M_{3240}\}) \cup \{M_{1543}, M_{1559}, M_{1566}, M_{1596}, M_{1598}, Fq\}$$
      under duality.
\end{lemma}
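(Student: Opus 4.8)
The plan is to identify $\mathcal{N}_3$ as the set of matroids in $\mathcal{X}_4$ (the excluded minors for $\mathbb{H}_4$-representability) that happen to be $\mathbb{H}_3$-representable, restricted to those on at most $10$ elements. By \cref{thmh4}, there are exactly $63$ excluded minors for $\mathbb{H}_4$-representability on at most $10$ elements, and the full list of these matroids is available from that computation. So the core task is a finite, essentially computational check: for each of the $63$ matroids in $\mathcal{X}_4$, test whether it is $\mathbb{H}_3$-representable, and collect those that are.

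First I would recall that, by the proxy theory recapped in \cref{prelims} (see \cref{proxytable}), a matroid is $\mathbb{H}_3$-representable if and only if it has an $F_\phi$-confined $\GF(151)$-representation, where $\phi : \mathbb{H}_3 \to \GF(151)$ sends $\alpha \mapsto 4$. Thus for each of the $63$ candidate matroids, I would search (by computer) for such a confined representation. Those admitting one are precisely the members of $\mathcal{N}_3$ on at most $10$ elements; the remaining $43$ are not $\mathbb{H}_3$-representable and hence fall under case~(ii) of the governing lemma (they lie in $\mathcal{X}_4 \cap \mathcal{X}_5$, to be handled at a later layer). The computation should return exactly $20$ matroids.

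Next I would verify that these $20$ matroids are exactly those described in the statement. This is a matter of identifying each matroid returned by the search with one of the named matroids, and confirming closure under duality. Since the set of excluded minors for $\mathbb{P}$-representability is closed under $\Delta^*$ by \cref{osvdelta}, and certainly under duality, it is natural to present the list compactly via $\Delta Y$-equivalence classes and duals. Concretely, I would check that the $20$ matroids are obtained by closing $\Delta(\{\pappus \ba e, (\pappus \ba e)^+, M_{3240}\}) \cup \{M_{1543}, M_{1559}, M_{1566}, M_{1596}, M_{1598}, Fq\}$ under duality, matching cardinalities: each of the three $\Delta Y$-classes together with duals, plus the six self-listed matroids with their duals, should total $20$. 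The explicit $\mathbb{H}_3$-representations certifying membership are deferred to the appendix.

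The main obstacle is not conceptual but bookkeeping and verification: I must be certain that the list of $63$ matroids from \cref{thmh4} is complete and correctly recorded, that the confined-representation search is implemented faithfully to the definition of $\mathbb{H}_3$-representability (so that a negative result genuinely certifies non-representability, not merely failure to find a representation within the search space), and that the canonical-form/isomorphism testing used to name the $20$ matroids and to verify $\Delta Y$- and dual-closure introduces no duplicates or omissions. Establishing that the proxy $(\GF(151),\phi)$ is sound in both directions — which is guaranteed by \cref{proxytable} and the surrounding proxy lemma — is what makes the negative certifications rigorous; given that, the remainder is a careful finite enumeration.
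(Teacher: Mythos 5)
Your proposal matches the paper's approach: the paper gives no explicit proof of this lemma, relying on exactly the finite computational check you describe --- testing each of the $63$ matroids from \cref{thmh4} for $\mathbb{H}_3$-representability via the $\GF(151)$ proxy and certifying the $20$ positives with the representations recorded in the appendix. One small slip in a side remark: the $43$ matroids that fail the test lie in $\mathcal{X}_4 \cap \mathcal{X}_3$ (every proper minor of such a matroid is $\mathbb{H}_4$-representable and hence $\mathbb{H}_3$-representable, so it becomes an excluded minor for $\mathbb{H}_3$-representability, as the paper notes immediately after the lemma), not in $\mathcal{X}_4 \cap \mathcal{X}_5$.
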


Note that, by \cref{thmh4,h3minors}, there are 43 excluded minors for $\mathbb{H}_4$-representability that are also excluded minors for $\mathbb{H}_3$-representability.


\begin{theorem}
  \label{thmh3}
  There are $133$ excluded minors for $\mathbb{H}_3$-representability on at most $10$ elements.
\end{theorem}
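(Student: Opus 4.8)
The plan is to mirror the inductive, computer-assisted argument used for \cref{thmh4}, now taking $i=3$. By the dichotomy lemma opening this section, any $M \in \mathcal{X}_3$ with $|E(M)| \le 10$ either has an $\mathcal{N}_3$-minor or belongs to $\mathcal{X}_4$. I would first dispense with the second case: the excluded minors for $\mathbb{H}_4$-representability that are \emph{not} $\mathbb{H}_3$-representable are automatically excluded minors for $\mathbb{H}_3$-representability, since the homomorphism $\mathbb{H}_4 \to \mathbb{H}_3$ guarantees that all their proper minors, being $\mathbb{H}_4$-representable, are $\mathbb{H}_3$-representable, while they themselves are not. By \cref{thmh4} and \cref{h3minors}, exactly $63 - 20 = 43$ of the $\mathbb{H}_4$-excluded minors on at most $10$ elements are of this type, whereas the remaining $20$ form the seed set $\mathcal{N}_3$ and are \emph{not} excluded minors for $\mathbb{H}_3$ (being $\mathbb{H}_3$-representable). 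These $20$ seeds are strong $\mathbb{H}_3$-stabilizers by \cref{hydrastabs}. It therefore remains to enumerate the excluded minors of $\mathbb{H}_3$ that possess a proper $\mathcal{N}_3$-minor.

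For this I would construct, for each $n \in \{7,8,9,10\}$, the catalogue $\mathcal{H}_3^{(n)}$ of all $n$-element $3$-connected $\mathbb{H}_3$-representable matroids with an $\mathcal{N}_3$-minor. Beginning from each seed in $\mathcal{N}_3$ and using the strong-stabilizer property---so that, once a representation of the seed is fixed, inequivalent $\mathbb{H}_3$-representations of an extension need not be tracked---one generates these matroids by repeated $3$-connected single-element extension, deciding $\mathbb{H}_3$-representability through the proxy $(\GF(151),\,\alpha \mapsto 4)$ of \cref{proxytable}. With the catalogues in hand, I would induct on $n$: assuming all excluded minors on fewer than $n$ elements are known, form all $3$-connected single-element extensions of the members of $\mathcal{H}_3^{(n-1)}$, then discard those lying in $\mathcal{H}_3^{(n)}$ and those containing a previously found excluded minor. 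Every surviving matroid is, by construction, an excluded minor for $\mathbb{H}_3$-representability on $n$ elements.

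Correctness of the extension step rests on \cref{nearregconn}. Each seed in $\mathcal{N}_3$, being an excluded minor for $\mathbb{H}_4$-representability, is not near-regular (near-regular matroids are $\mathbb{H}_4$-representable via the chain of homomorphisms $\mathbb{U}_1 \to \mathbb{U}_3 \cong \mathbb{H}_5 \to \mathbb{H}_4$). Hence, for any excluded minor $M$ with a proper $\mathcal{N}_3$-minor $N$, \cref{nearregconn} supplies an element $e$ with $M' \ba e$ being $3$-connected and having an $N'$-minor, for $\{M',N'\} \in \{(M,N),(M^*,N^*)\}$; thus $M$ or $M^*$ is a single-element extension of a matroid already catalogued. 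Closing the surviving set under duality therefore recovers every excluded minor with a proper $\mathcal{N}_3$-minor on at most $10$ elements. Adjoining the $43$ excluded minors inherited from $\mathcal{X}_4$ then gives the complete list, of total size $133$.

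The main obstacle I anticipate is computational scale together with careful bookkeeping, rather than any single conceptual step. The seed set $\mathcal{N}_3$ is larger and its members more intricate than in the $\mathbb{H}_4$ case, so the intermediate catalogues $\mathcal{H}_3^{(n)}$ grow substantially; one must test each of the many extensions correctly---for $\mathbb{H}_3$-representability through the proxy, and for containment of a smaller excluded minor---and deduplicate up to isomorphism before closing under duality. The two contributions to $\mathcal{X}_3$ are conceptually disjoint, since a matroid in $\mathcal{X}_4$ cannot have a proper $\mathcal{N}_3$-minor (as $\mathcal{N}_3 \subseteq \mathcal{X}_4$ and an excluded minor has no proper excluded-minor of its class); nonetheless, confirming that the proxy faithfully models $\mathbb{H}_3$-representability on every matroid encountered, and that neither omissions nor duplicates slip through the duality closure, is where the delicate verification lies.
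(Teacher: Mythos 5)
Your proposal matches the paper's proof essentially exactly: the paper likewise reduces to the dichotomy between inheriting the $43$ non-$\mathbb{H}_3$-representable members of $\mathcal{X}_4$ and running the same inductive single-element-extension search seeded from the $20$ strong stabilizers of \cref{h3minors}, with correctness of the extension step resting on \cref{nearregconn} and duality closure (the paper simply says ``essentially the same approach as for \cref{thmh4}'' and cites \cref{h3table}). The additional justifications you supply---why the seeds are not near-regular, and why the two contributions to $\mathcal{X}_3$ are disjoint---are correct and consistent with the paper's argument.
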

\begin{proof}
  Essentially the same approach was taken as for the proof of \cref{thmh4}, only using \cref{h3minors}.
  The number of $n$-element $3$-connected $\mathbb{H}_3$-representable matroids with an $\mathcal{N}_3$-minor are given in \cref{h3table}.
\end{proof}

\begin{table}[hbtp]
  \begin{tabular}{r|r r r}
    \hline
$r \ba n$ & 8 &  9 &   10 \\
    \hline
        3 & 2 &  3 &    1 \\
        4 & 9 & 65 &  231 \\
        5 & 2 & 65 & 1144 \\
        6 &   &  3 &  231 \\
        7 &   &    &    1 \\
       \hline                                           
    Total & 13 & 136 & 1608 \\
       \hline
  \end{tabular}
  \caption{The number of $3$-connected $\mathbb{H}_3$-representable $n$-element rank-$r$ matroids with a $\mathcal{N}_3$-minor, for $n \le 10$.}
  \label{h3table}
\end{table}


\begin{lemma}
  \label{h2minors}
  If $N \in \mathcal{N}_2$ and $E(N) \le 10$, then $N$ is one of the $99$ matroids obtained by closing the set
\begin{multline*}
  \{F_7^-, F_7^=, \mathcal{W}^3+e, Q_6+e, P_8, P_8^-, \TQ_8, \\
    M_{1537}, M_{1554}, M_{1556}, M_{1569}, M_{1676}, \Fr_1, \Fr_2, \Fr_3, \\
  M_{3241}, M_{10782}, M_{16834}, M_{17078}, M_{17759}, M_{47015}, \Fs_1, \Fs_2, \Fs_3, \Fs_4, \\
\Ft_1, \Ft_2, \Ft_4, \Ft_5, \Ft_6, \Ft_7, \Ft_8, \Fu_3, \Fu_5, \Fu_8\}
\end{multline*}
under $\Delta Y$-equivalence and duality.
\end{lemma}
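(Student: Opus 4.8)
The plan is to obtain $\mathcal{N}_2$ by sifting the excluded minors for $\mathbb{H}_3$-representability produced in \cref{thmh3} according to $\mathbb{H}_2$-representability. Since $\mathcal{N}_2$ consists precisely of those members of $\mathcal{X}_3$ that are $\mathbb{H}_2$-representable, and since \cref{thmh3} makes available all $133$ excluded minors for $\mathbb{H}_3$-representability on at most $10$ elements, the task reduces to deciding, for each of these matroids, whether it is $\mathbb{H}_2$-representable.

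First I would record that $\mathbb{H}_2$-representability is invariant under both duality and $\Delta Y$-equivalence. Invariance under duality is immediate, as the dual of a $\mathbb{P}$-representable matroid is $\mathbb{P}$-representable for any partial field $\mathbb{P}$. For a $\Delta Y$-exchange on a coindependent triangle $T$, the triangle $T$ is modular in $M(K_4)$, and $M(K_4)$ is representable over every partial field, so the generalised parallel connection $P_T(M, M(K_4))$ and the subsequent deletion of $T$ preserve $\mathbb{H}_2$-representability; thus $\Delta_T(M)$ is $\mathbb{H}_2$-representable whenever $M$ is, and dually for $\nabla$. Because $\mathcal{X}_3$ is itself closed under duality and, by \cref{osvdelta}, under $\Delta Y$-equivalence, it follows that $\mathcal{N}_2$ is closed under both operations. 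Hence it suffices to settle $\mathbb{H}_2$-representability for a single representative of each duality and $\Delta Y$-equivalence class among the $133$ excluded minors.

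Next I would run the representability test through the proxy $(\GF(29), i \mapsto 12)$ for $\mathbb{H}_2$ recorded in \cref{proxytable}: a matroid is $\mathbb{H}_2$-representable if and only if it admits an $F_\phi$-confined $\GF(29)$-representation. For each class representative I would search computationally for such a representation. Those admitting one are exactly the members of $\mathcal{N}_2$, and closing the $35$ representatives displayed in the statement under duality and $\Delta Y$-equivalence recovers these $99$ matroids.

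The main obstacle is the negative half of the test. Exhibiting $\mathbb{H}_2$-representability only requires producing one confined representation, but certifying that one of the remaining $34$ excluded minors is \emph{not} $\mathbb{H}_2$-representable requires ruling out every $F_\phi$-confined $\GF(29)$-representation. This is precisely what the confinement property of the proxy guarantees, so the search is finite; nonetheless it is where the bulk of the computation lies, and where careful book-keeping is needed to match each surviving matroid to its entry in the displayed list.
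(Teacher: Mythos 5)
Your proposal matches the paper's (largely implicit) argument: $\mathcal{N}_2$ is by definition the set of $\mathbb{H}_2$-representable members of $\mathcal{X}_3$, so one filters the $133$ excluded minors from \cref{thmh3} by testing $\mathbb{H}_2$-representability via the $\GF(29)$ proxy, using closure of $\mathcal{N}_2$ under duality and $\Delta Y$-equivalence to reduce to class representatives (the paper exhibits the positive certificates as $\mathbb{H}_2$-representations in the appendix, and the count $133 = 99 + 34$ is confirmed in the remark following the lemma). This is essentially the same approach; no gap.
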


Note that, by \cref{thmh3,h2minors}, there are $34$ excluded minors for $\mathbb{H}_3$-representability that are also excluded minors for $\mathbb{H}_2$-representability.

\begin{theorem}
  \label{thmh2}
  There are $621$ excluded minors for $\mathbb{H}_2$-representability on at most $10$ elements.
\end{theorem}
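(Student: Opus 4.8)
The plan is to mirror exactly the inductive generation argument used for \cref{thmh4,thmh3}, now at the layer $i=2$ with the seed set $\mathcal{N}_2$ supplied by \cref{h2minors}. By the dichotomy lemma for $\mathcal{X}_i$ stated above, every $M \in \mathcal{X}_2$ with $|E(M)| \le 10$ either has an $\mathcal{N}_2$-minor or already belongs to $\mathcal{X}_3$; the latter matroids are known from \cref{thmh3}, so it suffices to enumerate those excluded minors that have an $\mathcal{N}_2$-minor. The $99$ matroids in $\mathcal{N}_2$ are strong $\mathbb{H}_2$-stabilizers by \cref{hydrastabs}, which is precisely what is needed to grow a complete catalog from these seeds.

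Concretely, I would let $\mathcal{H}_2^{(n)}$ denote the $n$-element $3$-connected $\mathbb{H}_2$-representable matroids having an $\mathcal{N}_2$-minor, and exhaustively generate these for $n = 7, 8, 9, 10$, testing $\mathbb{H}_2$-representability via the $\GF(29)$ proxy of \cref{proxytable}. Proceeding by induction on $n$: assuming all excluded minors with fewer than $n$ elements are in hand, I would form every $3$-connected single-element extension of each matroid in $\mathcal{H}_2^{(n-1)}$, then discard those that are $\mathbb{H}_2$-representable (that is, those lying in $\mathcal{H}_2^{(n)}$) and those containing a previously found excluded minor. Every surviving matroid is an excluded minor, and since each member of $\mathcal{N}_2$ is non-near-regular, \cref{nearregconn} guarantees that closing this surviving set under duality recovers all $n$-element excluded minors.

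The principal obstacle here is scale rather than any new conceptual ingredient. The seed set $\mathcal{N}_2$ is an order of magnitude larger than those at the $\mathbb{H}_4$ and $\mathbb{H}_3$ layers, and the class of $\mathbb{H}_2$-representable matroids is correspondingly richer, so both the catalogs $\mathcal{H}_2^{(n)}$ and the pool of single-element extensions to classify become substantially larger. The delicate points are ensuring the seed list from \cref{h2minors} is complete (so that no excluded minor with an $\mathcal{N}_2$-minor escapes detection) and performing the representability tests reliably at this volume; once these book-keeping matters are handled, correctness follows exactly as in the proof of \cref{thmh4}.
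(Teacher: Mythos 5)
Your proposal matches the paper's proof, which simply states that the same approach as in \cref{thmh4,thmh3} is applied with the seed set from \cref{h2minors}; your fuller account of the induction, the use of \cref{hydrastabs} and \cref{nearregconn}, and the duality closure is exactly the argument the paper is invoking. The only substantive content the paper adds is the tabulation of catalog sizes in \cref{h2table}, so no further changes are needed.
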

\begin{proof}
  Again, we use a similar approach as in the proof of \cref{thmh3,thmh4}, only using \cref{h2minors}.
  The number of $n$-element $3$-connected $\mathbb{H}_2$-representable matroids with an $\mathcal{N}_2$-minor are given in \cref{h2table}.
\end{proof}

\begin{table}[hbtp]
  \begin{tabular}{r|r r r r}
    \hline
$r \ba n$ & 7 &   8 &    9 &    10 \\
    \hline
        3 & 4 &  13 &   32 &    37 \\
        4 & 4 &  89 &  913 &  5125 \\
        5 &   &  13 &  913 & 21839 \\
        6 &   &     &   32 &  5125 \\
        7 &   &     &      &    37 \\
       \hline                                           
    Total & 8 & 115 & 1890 & 32163 \\
       \hline
  \end{tabular}
  \caption{The number of $3$-connected $\mathbb{H}_2$-representable $n$-element rank-$r$ matroids with a $\mathcal{N}_2$-minor, for $n \le 10$.}
  \label{h2table}
\end{table}

\begin{lemma}
  \label{h1minors}
  If $N \in \mathcal{N}_1$ and $E(N) \le 10$, then $N$ is one of the $441$ matroids obtained by closing the set
  \begin{multline*}
    \{M_{1601}, \lambda_4^+, 
      \Ft_3, \Fu_1, \Fu_6, \Fu_7, \UG_{10},
      M_{822}, M_{835}, M_{836}, M_{854}, M_{1495}, M_{1505}, \\
      M_{1507}, \KP_8, 
      M_{1519}, M_{1530}, M_{1535}, M_{1538}, M_{1541}, M_{1542}, M_{1553}, M_{1591}, M_{1593}, M_{1599}, \\
      M_{1600}, M_{1609}, M_{1610}, M_{1618}, M_{1627}, M_{1674}, M_{1680}, M_{1695}, M_{1698}, M_{1713}, M_{1729}, \\
      M_{3193}, \BBR_9, A_9, \BBR_9^{\Delta\nabla}, M_{9574}, M_{9841}, M_{9844}, \PP_9, M_{10025}, M_{10052}, M_{10241}, M_{10242}, \\
      M_{10244}, M_{10251}, M_{10287}, M_{10368}, M_{10459}, M_{11161}, M_{11162}, M_{11347}, M_{12000}, M_{12575}, \\
      M_{15729}, \TQ_9, M_{16321}, M_{16332}, M_{18663}, M_{46942}, M_{46945}, M_{47635}, M_{48819}, M_{50041}, \\
      \Fv_{6}, \Fv_{7}, \Fv_{8}, \Fv_{11}, \Fv_{12}, \Fv_{16}, \Fv_{18}, \Fv_{19}, \Fv_{21}, \Fv_{24}, \Fv_{25}, \Fv_{27}, \Fv_{37}, \Fv_{41}, \\
    \Fv_{42}, \Fv_{47}, \Fv_{51}, \Fv_{54}, \Fv_{56}, \Fv_{60}, \Fv_{61}, \Fv_{66}, \Fv_{68}, \Fv_{71}, \Fv_{75}, \Fv_{77}, \Fv_{85}, \Fv_{87}, \\
  \Fv_{90}, \Fv_{91}, \Fv_{103}, \GP_{10}, \TQ_{10}, \FF_{10}, \Fv_{14}, \Fv_{22}, \Fv_{26}, \Fv_{28}, \Fv_{30}, \Fv_{32}, \Fv_{35}, \\
\Fv_{36}, \Fv_{39}, \Fv_{40}, \Fv_{43}, \Fv_{44}, \Fv_{46}, \Fv_{48}, \Fv_{49}, \Fv_{50}, \Fv_{53}, \Fv_{55}, \Fv_{57}, \Fv_{58}, \Fv_{59}, \\
\Fv_{62}, \Fv_{63}, \Fv_{65}, \Fv_{70}, \Fv_{72}, \Fv_{73}, \Fv_{78}, \Fv_{79}, \Fv_{81}, \Fv_{82}, \Fv_{83}, \Fv_{84}, \Fv_{89}, \Fv_{92}, \\
\Fv_{93}, \Fv_{94}, \Fv_{95}, \Fv_{96}, \Fv_{101}, \Fv_{102}, \Fv_{104}, \Fv_{106}, \Fv_{111}, \Fv_{112}\}
  \end{multline*}
  under $\Delta Y$-equivalence and duality.
\end{lemma}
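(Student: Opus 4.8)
The plan is to read off $\mathcal{N}_1$ from the excluded minors for $\mathbb{H}_2$-representability already determined in \cref{thmh2}. By definition, $\mathcal{N}_1$ consists of the matroids in $\mathcal{X}_2$ that are $\mathbb{H}_1$-representable, and $\mathbb{H}_1 = \GF(5)$; so a matroid $N$ with $|E(N)| \le 10$ lies in $\mathcal{N}_1$ if and only if $N$ is an excluded minor for the class of $\mathbb{H}_2$-representable matroids that is also $\GF(5)$-representable. Since every proper minor of such an $N$ is $\mathbb{H}_2$-representable and there is a homomorphism $\mathbb{H}_2 \to \GF(5)$, the relevant test is simply $\GF(5)$-representability of $N$ itself. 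Crucially, \cref{thmh2} supplies the complete list of $621$ excluded minors for $\mathbb{H}_2$-representability on at most $10$ elements, and $\mathcal{N}_1 \subseteq \mathcal{X}_2$, so no further search is needed: it remains only to decide, for each of these $621$ matroids, whether it is $\GF(5)$-representable.

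First I would test each of the $621$ matroids for a $\GF(5)$-representation, working directly over $\GF(5)$ (here the partial-field proxy machinery is unnecessary, as $\mathbb{H}_1 = \GF(5)$ is itself the target field). The matroids passing this test are exactly the members of $\mathcal{N}_1$ on at most $10$ elements; explicit $\GF(5)$-representations of the named matroids certify membership, and a direct enumeration should return the stated count of $441$.

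To present $\mathcal{N}_1$ as the closure of the smaller representative set given in the statement, I would check that $\mathcal{N}_1$ is closed under duality and under $\Delta Y$-equivalence. Closure under duality is immediate, since both $\GF(5)$-representability and $\mathbb{H}_2$-representability are preserved by duality, so $\mathcal{X}_2$ is closed under duality. For $\Delta Y$-equivalence, suppose $N \in \mathcal{N}_1$ and $N' \in \Delta^*(N)$: then $N'$ is again an excluded minor for $\mathbb{H}_2$-representability by \cref{osvdelta}, and $N'$ remains $\GF(5)$-representable because a \dY\ exchange on a coindependent triangle, and dually a \Yd\ exchange on an independent triad, preserves $\GF(5)$-representability (this follows from the defining construction via the generalised parallel connection with $M(K_4)$, which is itself $\GF(5)$-representable). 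Hence $N' \in \mathcal{N}_1$, and listing one representative of each $\Delta Y$-equivalence and duality class, as in the statement, recovers all of $\mathcal{N}_1$ under closure.

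The main obstacle is not conceptual but computational and organisational: one must test all $621$ candidates reliably, correctly partition the $\GF(5)$-representable ones into their $\Delta Y$-equivalence and duality classes, and confirm that the chosen representatives together with the two closure operations reproduce exactly $441$ matroids with no omission or duplication. Consistency of the class counts with \cref{thmh2} and the explicit representations of the named matroids are the natural checks for correctness.
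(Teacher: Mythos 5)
Your proposal matches the paper's (implicit) approach: the lemma is a computational byproduct of \cref{thmh2}, obtained by filtering the $621$ excluded minors for $\mathbb{H}_2$-representability on at most $10$ elements by $\GF(5)$-representability, with the compact presentation justified by closure of $\mathcal{X}_2$ under duality and $\Delta Y$-equivalence (\cref{osvdelta}) together with preservation of $\GF(5)$-representability under these operations. The paper gives no separate proof of this lemma, so your reconstruction of the methodology, including the observation that only $N$ itself (not its minors) needs to be tested, is exactly what is intended.
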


Let $M$ be a uniquely $\GF(5)$-representable matroid that is not dyadic.
Then $M$ has an $\mathcal{N}_1$-minor.
In particular, if $|E(M)| \le 10$, then $M$ or $M^*$ is $\Delta Y$-equivalent to a matroid that has an $N$-minor, for some matroid $N$ listed in \cref{h1minors}.

Note that, by \cref{thmh2,h1minors}, there are $180$ excluded minors for $\mathbb{H}_2$-representability that are also excluded minors for the class of $\GF(5)$-representable matroids.

Finally, we obtain our main result, \cref{thm1}, which we restate here.

\begingroup
\def\thetheorem{\ref{thm1}}
\begin{theorem}
There are $2128$ excluded minors for $\GF(5)$-representability on $10$ elements.
\end{theorem}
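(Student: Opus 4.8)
The plan is to run, for the bottom layer $i = 1$, exactly the seed-based enumeration already used to establish \cref{thmh4,thmh3,thmh2}, and then to fold in the excluded minors that the seed search cannot see. Write $\mathcal{X}_1$ for the excluded minors for $\GF(5)$-representability. By the lemma at the start of this section, every $M \in \mathcal{X}_1$ with $|E(M)| \le 10$ either has an $\mathcal{N}_1$-minor or lies in $\mathcal{X}_2$. Because there is a homomorphism $\mathbb{H}_2 \to \mathbb{H}_1 = \GF(5)$, every $\mathbb{H}_2$-representable matroid is $\GF(5)$-representable; consequently $\mathcal{X}_2 = \mathcal{N}_1 \sqcup (\mathcal{X}_1 \cap \mathcal{X}_2)$, since an excluded minor for $\mathbb{H}_2$ is either $\GF(5)$-representable, and so lies in $\mathcal{N}_1$, or it is not, in which case all of its proper minors are $\mathbb{H}_2$-representable and hence $\GF(5)$-representable, so it is itself an excluded minor for $\GF(5)$. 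By \cref{thmh2,h1minors}, of the $621$ matroids of $\mathcal{X}_2$ on at most $10$ elements exactly $441$ lie in $\mathcal{N}_1$, and the remaining $180$ constitute $\mathcal{X}_1 \cap \mathcal{X}_2$; these $180$ are precisely the excluded minors for $\GF(5)$ on at most $10$ elements that have no $\mathcal{N}_1$-minor.

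It then remains to enumerate the members of $\mathcal{X}_1$ that do have an $\mathcal{N}_1$-minor. The seeds are the $441$ matroids of \cref{h1minors}, which are strong $\mathbb{H}_1$-stabilizers by \cref{hydrastabs}; since $\mathbb{H}_1 = \GF(5)$ is a field, no proxy is needed and one works directly over $\GF(5)$. Starting from $\mathcal{N}_1$, I would build a catalog of the $3$-connected $\GF(5)$-representable matroids with an $\mathcal{N}_1$-minor on at most $10$ elements, one size at a time: from the $(n{-}1)$-element catalog, form all $3$-connected single-element extensions, discard those that are themselves in the class or that contain some excluded minor already found on fewer than $n$ elements, and record the survivors as the new size-$n$ excluded minors possessing an $\mathcal{N}_1$-minor. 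Each seed is not near-regular, so \cref{nearregconn} guarantees that for every such excluded minor at least one of it and its dual is reachable by this chain of extensions; closing the output under duality therefore yields the complete list. (The $\mathcal{N}_1$-minor case and the $\mathcal{X}_1 \cap \mathcal{X}_2$ case are disjoint on $\mathcal{X}_1$, since a member of $\mathcal{X}_2$ has no proper $\mathcal{N}_1$-minor: every member of $\mathcal{N}_1$ fails to be $\mathbb{H}_2$-representable, whereas every proper minor of an $\mathcal{X}_2$-matroid is $\mathbb{H}_2$-representable.)

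Uniting the two cases gives all of $\mathcal{X}_1$ on at most $10$ elements. As a consistency check, the output on at most $9$ elements should reproduce the $564$ excluded minors found by Mayhew and Royle~\cite{MR2008}; removing these leaves the excluded minors on exactly $10$ elements, which the enumeration reports to be $2128$.

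The main obstacle I expect is computational scale and book-keeping rather than any new idea. The catalogs at this layer dwarf those higher up --- the $\mathbb{H}_2$ counts in \cref{h2table} already reach tens of thousands of matroids at ten elements --- so the single-element extension and minor-containment tests over $\GF(5)$ must be implemented and isomorphism-reduced efficiently, and scrupulous accounting is required to keep the three roles of each candidate matroid straight (already in the class, a previously-found excluded minor, or newly qualifying) and to partition $\mathcal{X}_1$ cleanly between its $\mathcal{N}_1$-minor part and the $180$-matroid $\mathcal{X}_2$ contribution without omission or duplication.
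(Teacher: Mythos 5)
Your proposal is correct and follows essentially the same route as the paper: the paper's proof of this theorem simply runs the seed-based extension-and-filter enumeration of \cref{thmh4,thmh3,thmh2} one more time over $\GF(5)$ using the $441$ seeds of \cref{h1minors}, with the $180$ matroids of $\mathcal{X}_1 \cap \mathcal{X}_2$ accounted for exactly as you describe. Your additional justifications (the decomposition $\mathcal{X}_2 = \mathcal{N}_1 \sqcup (\mathcal{X}_1\cap\mathcal{X}_2)$, the disjointness of the two cases, and the applicability of \cref{nearregconn} because the seeds are not near-regular) are all sound and merely make explicit what the paper leaves implicit.
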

\addtocounter{theorem}{-1}
\endgroup

\begin{proof}
  Again, we use a similar approach as in the proof of \cref{thmh4,thmh3,thmh2}, only this time using \cref{h1minors}.
  The number of $n$-element rank-$r$ $3$-connected $\GF(5)$-representable matroids with an $\mathcal{N}_1$-minor 
  are given in \cref{gf5table}.
\end{proof}

\begin{table}[hbtp]
  \begin{tabular}{r|r r r}
    \hline
$r \ba n$ &   8 &    9 &     10 \\
    \hline
        3 &   4 &   33 &    120 \\
        4 &  46 & 2237 &  51823 \\
        5 &   4 & 2237 & 327186 \\
        6 &     &   33 &  51823 \\
        7 &     &      &    120 \\
       \hline                                           
      Total & 54 & 4540 & 431072 \\
       \hline
  \end{tabular}
  \caption{The number of $3$-connected $n$-element rank-$r$ matroids that are uniquely $\GF(5)$-representable but not dyadic, for $n \le 10$.}
  \label{gf5table}
\end{table}

\section{Final remarks}
\label{finalsec}

In \cref{gf5excounttable}, we provide the number of excluded minors for $\GF(5)$-representability of each rank and size (up to at most 10 elements).
We also consider the number of $\Delta Y$-equivalence classes for each size.

As remarked prior to \cref{osvdelta}, the excluded minors for representability over a partial field $\mathbb{P}$ are closed not only under $\Delta$-$Y$ exchange, but also under segment-cosegment exchange (a generalisation of $\Delta$-$Y$ exchange)~\cite{OSV2000}.
Thus, we could instead consider the number of equivalence classes under segment-cosegment exchange; however, the only effect would be merging two of the classes on $10$ elements, reducing the number of equivalence classes from $920$ to $919$.

\begin{table}[bhtp]
  \begin{tabular}{c | c c c}
    \hline
  $n$ &   $r$ & \#(matroids) & \#($\Delta Y^{(*)}$-classes) \\
    \hline
    7 & 2 / 5 &            1 &                              \\
      & 3 / 4 &            5 &                              \\ \cline{2-4}
      & Total &           12 &                            4 \\ 
    \hline
    8 & 3 / 5 &            2 &                              \\
      &     4 &           92 &                              \\ \cline{2-4}
      & Total &           96 &                           73 \\
    \hline
    9 & 3 / 6 &            9 &                              \\
      & 4 / 5 &          219 &                              \\ \cline{2-4}
      & Total &          456 &                          131 \\ 
    \hline
   10 & 3 / 7 &          1   &                              \\
      & 4 / 6 &         130  &                              \\
      & 5     &         1866 &                              \\ \cline{2-4}
      & Total &         2128 &                          920 \\
    \hline
  \end{tabular}
  \caption{The number of excluded minors for the class of $\GF(5)$-representable matroids on $n$ elements and rank $r$, and the number of $\Delta Y$-equivalence classes.}
  \label{gf5excounttable}
\end{table}

Of the $920$ dual-closed $\Delta Y$-equivalence classes of $10$-element excluded minors for the class of $\GF(5)$-representable matroids, $513$ of the classes consist of matroids that are not representable over any field.  This corresponds to $1235$ out of the $2128$ excluded minors on $10$ elements.

A partial field $\mathbb{P}$ is \emph{universal} if it is the universal partial field of some matroid~$M$; that is, $\mathbb{P} = \mathbb{P}_M$.
Van Zwam showed that several well-known partial fields are universal~\cite[Table~3.2]{vanZwam2009}, but commented that it was not known if each Hydra-$i$ partial field is universal, for $i \in \{2,3,4,5\}$ \cite[page~92]{vanZwam2009}.
It follows from \cref{3reglemma} that $\mathbb{P}_{U_{2,6}}=\mathbb{H}_5$.
In the process of finding the excluded minors for $\GF(5)$, we were also able to find a matroid $M$ such that $\mathbb{P}_M=\mathbb{H}_i$ for each $i$, thereby showing that each Hydra partial field is universal.
These are given in \cref{hydrauniversal}.  We note that each matroid in this table has at most $9$ elements.

\begin{table}[hb]
\begin{tabular}{c | c c c c c}
    \hline
    $\mathbb{P}_M$ &
    $\mathbb{H}_5$ &
    $\mathbb{H}_4$ &
    $\mathbb{H}_3$ &
    $\mathbb{H}_2$ &
    $\GF(5)$ \\
    $M$ &
    $U_{2,6}$ &
    $P_8^=$ &
    $(\pappus \ba e)^+$ &
    $M_{16834}$ &
    $M_{15729}$ \\
    \hline
\end{tabular}
  \caption{Matroids with Hydra universal partial fields}
  \label{hydrauniversal}
\end{table}

\bibliographystyle{abbrv}
\bibliography{lib}

\appendix

\section*{Appendix}

Here we provide representations for the $\GF(5)$-representable matroids appearing as excluded minors for $\mathbb{H}_i$-representable matroids, for some $i \in \{2,3,4,5\}$.


In what follows, matroids appear just once.
That is, when $N$ is a excluded minor for the class of $\mathbb{H}_i$-representable matroids, and $N$ is $\mathbb{H}_j$-representable but not $\mathbb{H}_{j+1}$-representable for some $j < i-1$, then $N$ appears in the ``Excluded minors for $\mathbb{H}_i$-representable matroids'' section under the ``$\mathbb{H}_j$-representable'' heading (but not in the ``Excluded minors for $\mathbb{H}_{i-1}$-representable matroids'' section, for example, despite $N$ also being an excluded minor for this class).

Matroids on at most nine elements are typically given the name $M_n$, where $n$ is the number in Mayhew and Royle's catalog of all matroids on at most nine elements.
Exceptions are made for well-known matroids that have notation associated with them, or are free extensions or single-element deletions of such matroids, or for matroids that have been discovered in ongoing work on matroids representable over all fields of size at least four~\cite{Brettell24}.

\subsection*{Excluded minors for \texorpdfstring{$\mathbb{H}_5$}{H5}-representable matroids}
\begin{itemize}
  \item \emph{$\mathbb{H}_4$-representable:} 


\noindent
\begin{tabular}{p{.45\textwidth} p{0.45\textwidth}}
$M(K_4)+e$:
$\begin{bmatrix}
 1 &  \alpha & \alpha & 1 \\
 0 &       1 &      1 & 1 \\
 1 &       0 & \alpha & \frac{\beta(\alpha-1)}{1-\beta} \\
\end{bmatrix}$ & %
$\Lambda_3$:
$\begin{bmatrix}
 1 &      1 & \alpha+\beta-2\alpha\beta & \alpha\beta-1 \\
 1 & \alpha &                         0 & \alpha(\beta-1) \\
 1 &      0 &           \alpha(1-\beta) & \alpha(\beta-1) \\
\end{bmatrix}$
\end{tabular}


\noindent
\begin{tabular}{p{.45\textwidth} p{0.45\textwidth}}
$P_8^=$:
$\begin{bmatrix}
             1 &                     -1 &                      -1 & 1 \\
             1 & \frac{\alpha+\beta-2\alpha\beta}{\alpha(\beta-1)} &                       0 & 1 \\
             1 &                     -1 & \frac{(\alpha-1)(\beta-1)}{\alpha\beta-1} & 0 \\
 \frac{1}{\beta-1} &                      1 &                      1  & \frac{\alpha(\beta-1)}{\alpha+\beta-2\alpha\beta}\\
\end{bmatrix}$ &
\end{tabular}

\item \emph{$\mathbb{H}_2$-representable:} 

\noindent
\begin{tabular}{p{.45\textwidth} p{0.45\textwidth}}
$F_7^-$:
$\begin{bmatrix}
  0 & 1 & 1 & 1 \\
  1 & 0 & 1 & 1 \\
  1 & 1 & 0 & 1
\end{bmatrix}$ & 
$F_7^=$:
$\begin{bmatrix}
  0 & 1 & i & 1 \\
  1 & 0 & 1 & 1 \\
  1 & 1 & 0 & 1
\end{bmatrix}$
\end{tabular}

\noindent
\begin{tabular}{p{.45\textwidth} p{0.45\textwidth}}
$\mathcal{W}^3+e$:
$\begin{bmatrix}
  1 & 0 & i & 1 \\
  i & 1 & 0 & 1 \\
  0 & i & 1 & 1
\end{bmatrix}$ & 
$Q_6+e$:
$\begin{bmatrix}
  \frac{i+1}{2} & 0 & i & 1 \\
  1 & 1 & 1 & 1 \\
  0 & \frac{1-i}{2} & -i & 1
\end{bmatrix}$
\end{tabular}


\noindent
\begin{tabular}{p{.45\textwidth} p{0.45\textwidth}}
$P_8$:
$\begin{bmatrix}
  0 & 1 & 1 & 2 \\
  1 & 0 & 1 & 1 \\
  1 & 1 & 0 & 1 \\
  2 & 1 & 1 & 0 \\
\end{bmatrix}$ & 
$P_8^-$:
$\begin{bmatrix}
  1 & 1 & 1 & i+1 \\
  1 & 0 & i+1 & i+1 \\
  1 & -i & 1 & 0 \\
  0 & 1 & 1 & 1 \\
\end{bmatrix}$
\end{tabular}

\noindent
\begin{tabular}{p{.45\textwidth} p{0.45\textwidth}}
$\TQ_8$:
$\begin{bmatrix}
  0 & i & 1 & 1 \\
  1 & 0 & i & i-1 \\
  1 & i & 0 & i \\
  1 & i-1 & 1 & 0 \\
\end{bmatrix}$ & 
\end{tabular}
\end{itemize}

\subsection*{Excluded minors for \texorpdfstring{$\mathbb{H}_4$}{H4}-representable matroids}

\begin{itemize}
  \item \emph{$\mathbb{H}_3$-representable:} 


\noindent
\begin{tabular}{p{.45\textwidth} p{0.45\textwidth}}
$\pappus \ba e$:
$\begin{bmatrix}
 1 & 1 &        0 &   \alpha & \frac{\alpha^2}{\alpha-1}\\
 1 & 0 &   \alpha & \alpha-1 & \alpha \\
 0 & 1 & 1-\alpha &        1 & 1 \\
\end{bmatrix}$ & 
$(\pappus \ba e)^+$:
$\begin{bmatrix}
 1 & 1 & 0         & \alpha   & \alpha \\
 1 & 0 & \alpha    & \alpha-1 & \alpha \\
 0 & 1 & 1-\alpha  & 1        & 1 \\
\end{bmatrix}$
\end{tabular}


\noindent
\begin{tabular}{p{.45\textwidth} p{0.45\textwidth}}
$M_{1559}$: 
$\begin{bmatrix}
\alpha(1-\alpha) &               1 & 1 & 1 \\
     1 & \frac{\alpha-1}{\alpha^2} & 1 & 1 \\
     \alpha &               0 & \alpha & 1 \\
     0 &               1 & \alpha & 1 \\
\end{bmatrix}$ & 
$M_{1596}$: 
$\begin{bmatrix}
  1-\alpha &           \alpha &        1 & 1 \\
\frac{-\alpha}{1-\alpha} & \frac{1}{1-\alpha} & 1 & 1 \\
         1 &                0 &   \alpha & 1 \\
         0 &           \alpha &   \alpha & 1 \\
\end{bmatrix}$
\end{tabular}

\noindent
\begin{tabular}{p{.45\textwidth} p{0.45\textwidth}}
$M_{1543}$:
$\begin{bmatrix}
       1 &      1 &        1 & 1 \\
  \alpha & \alpha &        0 & 1 \\
  \alpha &      0 & \alpha^2 & \alpha^2-\alpha+1 \\
       0 &      1 & 1-\alpha & 1-\alpha \\
\end{bmatrix}$ &
$M_{1566}$:
$\begin{bmatrix}
                      1 &      1 & \frac{\alpha-1}{\alpha} & 1 \\
                      1 & \alpha &                  \alpha & 1 \\
\frac{\alpha}{\alpha-1} &      0 &                       1 & 1-\alpha \\
                      0 &      1 &                       1 & 1 \\
\end{bmatrix}$
\end{tabular}

\noindent
\begin{tabular}{p{.45\textwidth} p{0.45\textwidth}}
$M_{1598}$:
$\begin{bmatrix}
  1                                &   \alpha &      1 & 1 \\
  1                                & \alpha-1 & \alpha & \alpha \\
  \frac{\alpha}{\alpha^2-\alpha+1} &        0 & \alpha & 1 \\
  0                                &   \alpha & \alpha & 1 \\
\end{bmatrix}$ &
\end{tabular}



\noindent 
\begin{tabular}{p{.45\textwidth} p{0.45\textwidth}}
$M_{3240}$:
$\begin{bmatrix}
 1 &   0 & \frac{\alpha^2}{\alpha^2-\alpha+1} & \frac{\alpha^2}{\alpha^2-\alpha+1} & 1 & \alpha \\
 1 & \alpha^2 &                   \alpha &                   0 & \alpha & \alpha^2 \\
 1 & \alpha-1 &                   1 &                   1 & 1 & \alpha-1 \\
\end{bmatrix}$ &
\end{tabular}


\noindent 
\begin{tabular}{p{.45\textwidth} p{0.45\textwidth}}
$\Fq$:
$\begin{bmatrix}
 1 & 1 & 1 & 1 & 0 \\
 0 & \alpha(\alpha-1) & 0 & \alpha^2-\alpha+1 & \alpha^2-\alpha+1 \\
 \alpha & 0 & \alpha & 1 & 0 \\
 \alpha^2 & \alpha & 0 & \alpha-1 & \alpha-1 \\
 \alpha(1-\alpha) & 0 & \alpha & 1 & 1 \\
\end{bmatrix}$ &
\end{tabular}

\item \emph{$\mathbb{H}_2$-representable:} 

\noindent
\begin{tabular}{p{.45\textwidth} p{0.45\textwidth}}
$M_{1537}$:
$\begin{bmatrix}
1 & 1 & 1 & 1 \\
\frac{1}{i} & \frac{1}{i} & 1 & 1 \\
-1 & 0 & i & 1 \\
0 & 1 & i & 1
\end{bmatrix}$ & %
$M_{1554}$:
$\begin{bmatrix}
1 & 1 & 1 & 1 \\
-1 & -1 & 1 & 1 \\
i & 0 & i & 1 \\
0 & -i & i & 1
\end{bmatrix}$
\end{tabular}

\noindent
\begin{tabular}{p{.45\textwidth} p{0.45\textwidth}}
$M_{1556}$:
$\begin{bmatrix}
1 & 1 & 1 & 1 \\
-\frac{1}{i} & -\frac{1}{i} & 1 & 1 \\
-1 & 0 & i & 1 \\
0 & 1 & i & 1
\end{bmatrix}$ & %
$M_{1569}$:
$\begin{bmatrix}
  1 & 1 & 2 & 1 \\
  1 & 1-i & 1-i & 1 \\
  i & 0 & i(1-i) & 1 \\
  0 & 1 & 1 & 1
\end{bmatrix}$
\end{tabular}

\noindent
\begin{tabular}{p{.45\textwidth} p{0.45\textwidth}}
$M_{1676}$:
$\begin{bmatrix}
1 & 1 & 1 & 1 \\
-1 & -1 & 1 & 1 \\
i & 0 & -1 & 1 \\
0 & -i & -1 & 1
\end{bmatrix}$ & %
\end{tabular}


\noindent
\begin{tabular}{p{.45\textwidth} p{0.45\textwidth}}
$\Fr_1$:
$\begin{bmatrix}
1 & i & 1 & 0 & 0 \\
1 & 1 & 1 & 1 & 1 \\
1 & i & 1 & i & i \\
1 & -i & 0 & 1 & 0 \\
1 & 0 & 0 & 1 & \frac{i+1}{2} \\
\end{bmatrix}$ & %
$\Fr_2$:
$\begin{bmatrix}
1 & 1 & 1 & 1 & 1 \\
1 & i+1 & 0 & 0 & 1-i \\
1 & 0 & \frac{1-i}{2} & 1 & 1 \\
1 & 0 & 1-i & 2 & 0 \\
1 & i+1 & 0 & 2 & i+1 \\
\end{bmatrix}$
\end{tabular}

\noindent
\begin{tabular}{p{.45\textwidth} p{0.45\textwidth}}
$\Fr_3$:
$\begin{bmatrix}
-i & 1 & 0 & 1 & -i \\
0 & 1 & \frac{i+1}{2} & 1 & 0 \\
i & 1 & i & 0 & 0 \\
1 & 1 & 1 & 0 & 1 \\
0 & 1 & 0 & 1 & \frac{1-i}{2} \\
\end{bmatrix}$ & %
\end{tabular}

\item \emph{Uniquely $\GF(5)$-representable:} 

\noindent
\begin{tabular}{p{.45\textwidth} p{0.45\textwidth}}
$M_{1601}$:
$\begin{bmatrix}
1 & 1 & 1 & 1 \\
1 & 4 & 3 & 0 \\
1 & 1 & 4 & 4 \\
1 & 4 & 0 & 3 \\
\end{bmatrix}$ & %
$\lambda_4^+$: 
$\begin{bmatrix}
1 & 1 & 1 & 1 \\
1 & 3 & 0 & 4 \\
1 & 3 & 1 & 0 \\
1 & 2 & 3 & 3 \\
\end{bmatrix}$
\end{tabular}
\end{itemize}

\pagebreak
\subsection*{Excluded minors for \texorpdfstring{$\mathbb{H}_3$}{H3}-representable matroids}

\begin{itemize}
  \item \emph{$\mathbb{H}_2$-representable:} 

\noindent
\begin{tabular}{p{.46\textwidth} p{0.46\textwidth}}
$M_{3241}$:
$\begin{bmatrix}
  \frac{2}{1+i} & \frac{2}{1-i} & 0 & 1 & 1 & \frac{2}{1-i} \\
  1 & 0 & \frac{1-i}{2} & 1 & 1 & 1 \\
  0 & 1 & 1 & \frac{1}{1-i} & 1 & 1 \\
\end{bmatrix}$ & %
$M_{10782}$:
$\begin{bmatrix}
  0 & i & -i & 0 & 1 \\
  0 & 0 & 1 & \frac{1-i}{2} & 1 \\
  \frac{i}{1+i} & 1 & 0 & 0 & 1 \\
  1 & 1 & 1 & 1 & 1 \\
\end{bmatrix}$
\end{tabular}

\noindent
\begin{tabular}{p{.46\textwidth} p{0.46\textwidth}}
$M_{16834}$:
$\begin{bmatrix}
  1 & 1 & 1 & 1 & 0 \\
  1 & 1-i & 1-i & 0 & 0 \\
  1 & 0 & 2 & 1-i & 1 \\
  1 & 1-i & 0 & 1 & i \\
\end{bmatrix}$ & %
$M_{17078}$:
$\begin{bmatrix}
  1 & 1 & i+1 & \frac{i+1}{2} & 0 \\
  1 & 1 & i & 0 & -i \\
  \frac{i+1}{2} & 1 & 0 & \frac{1}{2} & 1 \\
  0 & 1 & 1 & 1 & 1 \\
\end{bmatrix}$
\end{tabular}

\noindent
\begin{tabular}{p{.46\textwidth} p{0.46\textwidth}}
$M_{17759}$:
$\begin{bmatrix}
  0 & i & 1 & 1 & i+1 \\
  1 & 1 & 1 & 1 & 1 \\
  0 & -1 & 0 & 1 & i \\
  1 & 0 & \frac{i}{i+1} & 1 & i \\
\end{bmatrix}$ & %
$M_{47015}$:
$\begin{bmatrix}
  1 & 1 & 0 & \frac{1-i}{2} & \frac{1+i}{2} \\
  1 & 1 & 1 & 1 & 1 \\
  1 & i+1 & i & 1 & i \\
  1 & i+1 & i & 0 & i+1 \\
\end{bmatrix}$
\end{tabular}


\noindent
\begin{tabular}{p{.45\textwidth} p{0.45\textwidth}}
$\Fs_1$:
$\begin{bmatrix}
  \frac{i}{i+1} &             1 & 1 & 0 & i & 1             & \frac{1}{i+1} \\
  \frac{i+1}{2} &             1 & 1 & 1 & 0 & \frac{i+1}{2} & 1 \\
  \frac{1}{i+1} & \frac{1}{i+1} & 1 & 1 & 1 & \frac{1}{i+1} & \frac{1}{i+1} \\
\end{bmatrix}$ & %
$\Fs_2$:
$\begin{bmatrix}
  0 & -i & 0 & i & -i & 1 \\
  1 & 1 & 1 & 1 & 1 & 1 \\
  i & i & 0 & i & 1 & 1 \\
  i & 0 & \frac{i+1}{2} & i & 1 & 1 \\
\end{bmatrix}$
\end{tabular}

\noindent
\begin{tabular}{p{.45\textwidth} p{0.45\textwidth}}
$\Fs_3$:
$\begin{bmatrix}
  0 & \frac{i+1}{2} & 1 & \frac{1-i}{2} & 0 & 0 \\
  1 & 1 & 1 & 0 & i & -1 \\
  1 & 0 & 1 & 1 & 1 & 1 \\
  \frac{i+1}{i} & 0 & 0 & 1 & i+1 & 0 \\
\end{bmatrix}$ & %
$\Fs_4$:
$\begin{bmatrix}
  0 & 0 & 1 & \frac{i+1}{2} & \frac{1-i}{2} & 0 \\
  1 & 1 & 1 & 1 & 1 & 1 \\
  \frac{i+1}{2} & 1 & 1 & \frac{i+1}{2} & \frac{1-i}{2} & \frac{1-i}{2} \\
  1 & 1-i & 1 & 0 & 1-i & -i \\
\end{bmatrix}$
\end{tabular}

\noindent
\begin{tabular}{p{.45\textwidth} p{0.45\textwidth}}
$\Ft_1$:
$\begin{bmatrix}
  i+1 & \frac{i+1}{2} & 1 & i+1 & 0 \\
  1 & 1 & 1 & 1 & 1 \\
  1 & 1 & 1 & 0 & 0 \\
  i+1 & \frac{i+1}{2} & 1 & i & 1 \\
  0 & 0 & 1 & i & 1 \\
\end{bmatrix}$ & %
$\Ft_2$:
$\begin{bmatrix}
  1 & 0 & 1-i & i+1 & 2 \\
  1 & 1 & 0 & 1-i & 0 \\
  1 & i+1 & i+1 & 1-i & 0 \\
  1 & 1 & 1 & 1 & 1 \\
  1 & 0 & 0 & 1 & \frac{2}{i+1} \\
\end{bmatrix}$
\end{tabular}

\noindent
\begin{tabular}{p{.45\textwidth} p{0.45\textwidth}}
$\Ft_4$:
$\begin{bmatrix}
  1 & i & 0 & \frac{i+1}{2} & i+1 \\
  1 & 1 & 1 & 1 & 1 \\
  1 & i & 0 & i & i \\
  1 & -i & 1 & 0 & 0 \\
  1 & 1 & 1 & \frac{i+1}{2} & 0 \\
\end{bmatrix}$ & %
$\Ft_5$:
$\begin{bmatrix}
  i+1 & i & i+1 & 1 & 0 \\
  i+1 & 0 & \frac{i+1}{2} & 1 & 1 \\
  1 & 0 & 0 & 1 & 1 \\
  1 & 1 & 1 & 1 & 1 \\
  0 & 1 & 0 & 1 & i+1 \\
\end{bmatrix}$
\end{tabular}

\noindent
\begin{tabular}{p{.45\textwidth} p{0.45\textwidth}}
$\Ft_6$:
$\begin{bmatrix}
1 & 1 & 1 & 1 & 1 \\
1 & i & i & 0 & 0 \\
1 & 0 & 1 & 1-i & 1 \\
1 & 0 & 1 & 0 & \frac{1}{i} \\
1 & i & i & 1 & 1 \\
\end{bmatrix}$ & %
$\Ft_7$:
$\begin{bmatrix}
1 & 1 & 1 & 1 & 1 \\
2 & 1 & 1 & 2 & 1 \\
\frac{2}{i+1} & 1 & 0 & 2 & 1 \\
1 & 1 & 1 & i+1 & \frac{i+1}{2} \\
0 & 1 & 0 & i+1 & i \\
\end{bmatrix}$
\end{tabular}

\noindent
\begin{tabular}{p{.45\textwidth} p{0.45\textwidth}}
$\Ft_8$:
$\begin{bmatrix}
1 & 1 & 1 & 1 & 1 \\
1 & i+1 & 1-i & 0 & 0 \\
1 & 0 & 1 & -i & \frac{1-i}{2} \\
1 & 0 & 1 & 0 & 1 \\
1 & 1-i & 0 & 1 & 1 \\
\end{bmatrix}$ & %
$\Fu_3$:
$\begin{bmatrix}
  0 & 1 & 1 & i & 1 \\
  0 & 1 & 1 & -1 & 0 \\
  \frac{1}{2} & 0 & 1 & 0 & \frac{1}{2} \\
  1 & 1 & 1 & 1 & 1 \\
  \frac{i+1}{2} & 0 & 1 & 1 & 1 \\
\end{bmatrix}$ %
\end{tabular}

\noindent
\begin{tabular}{p{.45\textwidth} p{0.45\textwidth}}
$\Fu_5$:
$\begin{bmatrix}
  1 & 0 & 0 & \frac{1-i}{2} & \frac{1-i}{2} \\
  1 & 1 & 1 & 1 & 1 \\
  1 & \frac{1}{2} & 0 & \frac{1}{2} & \frac{1}{2} \\
  1 & 1 & 1 & 0 & \frac{1}{2} \\
  1 & \frac{i+1}{2} & 1 & \frac{1-i}{2} & \frac{1}{2} \\
\end{bmatrix}$ &
$\Fu_8$:
$\begin{bmatrix}
  0 & -1 & 1 & i & 1 \\
  1 & 1 & 1 & 0 & i \\
  1 & 1 & 1 & 1 & 1 \\
  \frac{1-i}{2} & 0 & 1 & 0 & 1 \\
  0 & i & 1 & \frac{1+i}{2} & 0 \\
\end{bmatrix}$ 
\end{tabular}

  \item \emph{Uniquely $\GF(5)$-representable:} 

\noindent
\begin{tabular}{p{.45\textwidth} p{0.45\textwidth}}
$\Ft_3$:
$\begin{bmatrix}
1 & 1 & 1 & 1 & 1 \\
1 & 2 & 2 & 0 & 0 \\
1 & 0 & 1 & 4 & 2 \\
1 & 0 & 1 & 0 & 1 \\
1 & 2 & 2 & 1 & 1 \\
\end{bmatrix}$ & %
$\Fu_1$:
$\begin{bmatrix}
1 & 1 & 1 & 1 & 1 \\
1 & 3 & 3 & 0 & 0 \\
1 & 0 & 1 & 3 & 1 \\
1 & 0 & 1 & 0 & 3 \\
1 & 3 & 3 & 1 & 1 \\
\end{bmatrix}$
\end{tabular}

\noindent
\begin{tabular}{p{.45\textwidth} p{0.45\textwidth}}
$\Fu_6$:
$\begin{bmatrix}
1 & 0 & 4 & 1 & 1 \\
1 & 1 & 0 & 4 & 1 \\
1 & 4 & 3 & 0 & 0 \\
1 & 1 & 1 & 1 & 1 \\
1 & 1 & 0 & 2 & 4 \\
\end{bmatrix}$ & %
$\Fu_7$:
$\begin{bmatrix}
0 & 1 & 1 & 1 & 1 \\
1 & 0 & 0 & 1 & 1 \\
1 & 1 & 3 & 0 & 0 \\
0 & 1 & 4 & 2 & 4 \\
2 & 1 & 0 & 4 & 1 \\
\end{bmatrix}$
\end{tabular}

\noindent
\begin{tabular}{p{.45\textwidth} p{0.45\textwidth}}
$\UG_{10}$:
$\begin{bmatrix}
0 & 2 & 1 & 4 & 1 \\
1 & 1 & 1 & 0 & 4 \\
1 & 1 & 1 & 1 & 1 \\
3 & 0 & 1 & 0 & 1 \\
0 & 4 & 1 & 2 & 0 \\
\end{bmatrix}$ & %
\end{tabular}
\end{itemize}

\subsection*{Excluded minors for \texorpdfstring{$\mathbb{H}_2$}{H2}-representable matroids}
\begin{itemize}
  \item \emph{Uniquely $\GF(5)$-representable:} 

\noindent
\begin{tabular}{p{.46\textwidth} p{0.46\textwidth}}
$M_{822}$:
$\begin{bmatrix}
0 & 3 & 1 & 1 & 1\\
4 & 0 & 1 & 1 & 2\\
1 & 1 & 1 & 0 & 1\\
\end{bmatrix}$ & %
$M_{835}$:
$\begin{bmatrix}
1 & 1 & 0 & 1 & 2\\
2 & 0 & 1 & 1 & 1\\
0 & 3 & 2 & 1 & 1\\
\end{bmatrix}$
\end{tabular}

\noindent
\begin{tabular}{p{.46\textwidth} p{0.46\textwidth}}
$M_{836}$:
$\begin{bmatrix}
3 & 0 & 1 & 1 & 2\\
0 & 1 & 1 & 2 & 4\\
1 & 1 & 1 & 1 & 1\\
\end{bmatrix}$ & %
$M_{854}$:
$\begin{bmatrix}
4 & 0 & 2 & 1 & 1\\
0 & 2 & 3 & 3 & 1\\
1 & 1 & 1 & 1 & 1\\
\end{bmatrix}$
\end{tabular}

\noindent
\begin{tabular}{p{.46\textwidth} p{0.46\textwidth}}
$M_{1495}$:
$\begin{bmatrix}
1 & 1 & 2 & 0\\
1 & 0 & 2 & 2\\
1 & 1 & 1 & 1\\
1 & 2 & 0 & 2\\
\end{bmatrix}$ & %
$M_{1505}$:
$\begin{bmatrix}
3 & 1 & 2 & 0\\
1 & 1 & 1 & 1\\
0 & 1 & 2 & 2\\
2 & 1 & 0 & 2\\
\end{bmatrix}$
\end{tabular}

\noindent
\begin{tabular}{p{.46\textwidth} p{0.46\textwidth}}
$M_{1507}$:
$\begin{bmatrix}
4 & 1 & 1 & 0\\
1 & 1 & 1 & 1\\
0 & 1 & 2 & 2\\
2 & 1 & 0 & 2\\
\end{bmatrix}$ & %
$\KP_8$: 
$\begin{bmatrix}
1 & 3 & 3 & 0\\
1 & 1 & 1 & 1\\
1 & 3 & 2 & 2\\
1 & 4 & 0 & 1\\
\end{bmatrix}$
\end{tabular}

\noindent
\begin{tabular}{p{.46\textwidth} p{0.46\textwidth}}
$M_{1519}$:
$\begin{bmatrix}
1 & 4 & 4 & 0\\
1 & 0 & 4 & 1\\
1 & 2 & 3 & 2\\
1 & 1 & 1 & 1\\
\end{bmatrix}$ & %
$M_{1530}$:
$\begin{bmatrix}
1 & 4 & 2 & 0\\
1 & 0 & 2 & 2\\
1 & 1 & 1 & 1\\
1 & 2 & 0 & 2\\
\end{bmatrix}$
\end{tabular}

\noindent
\begin{tabular}{p{.46\textwidth} p{0.46\textwidth}}
$M_{1535}$:
$\begin{bmatrix}
1 & 1 & 4 & 0\\
1 & 4 & 3 & 3\\
1 & 1 & 1 & 1\\
1 & 3 & 0 & 3\\
\end{bmatrix}$ & %
$M_{1538}$:
$\begin{bmatrix}
1 & 2 & 3 & 0\\
1 & 2 & 4 & 4\\
1 & 1 & 1 & 1\\
1 & 4 & 0 & 4\\
\end{bmatrix}$
\end{tabular}

\noindent
\begin{tabular}{p{.46\textwidth} p{0.46\textwidth}}
$M_{1541}$:
$\begin{bmatrix}
2 & 2 & 1 & 0\\
2 & 0 & 1 & 1\\
0 & 2 & 1 & 3\\
1 & 1 & 1 & 1\\
\end{bmatrix}$ & %
$M_{1542}$:
$\begin{bmatrix}
1 & 1 & 1 & 1\\
1 & 4 & 0 & 2\\
1 & 4 & 4 & 0\\
1 & 2 & 4 & 1\\
\end{bmatrix}$
\end{tabular}

\noindent
\begin{tabular}{p{.46\textwidth} p{0.46\textwidth}}
$M_{1553}$:
$\begin{bmatrix}
1 & 0 & 3 & 1\\
1 & 4 & 0 & 4\\
1 & 4 & 1 & 0\\
1 & 1 & 1 & 1\\
\end{bmatrix}$ & %
$M_{1591}$:
$\begin{bmatrix}
3 & 0 & 1 & 3\\
0 & 4 & 1 & 2\\
3 & 1 & 1 & 0\\
1 & 1 & 1 & 1\\
\end{bmatrix}$
\end{tabular}

\noindent
\begin{tabular}{p{.46\textwidth} p{0.46\textwidth}}
$M_{1593}$:
$\begin{bmatrix}
1 & 1 & 1 & 1\\
1 & 4 & 2 & 1\\
1 & 3 & 4 & 3\\
1 & 2 & 3 & 3\\
\end{bmatrix}$ & %
$M_{1599}$:
$\begin{bmatrix}
1 & 1 & 1 & 1\\
1 & 3 & 4 & 3\\
1 & 2 & 1 & 4\\
1 & 4 & 3 & 0\\
\end{bmatrix}$
\end{tabular}

\noindent
\begin{tabular}{p{.46\textwidth} p{0.46\textwidth}}
$M_{1600}$:
$\begin{bmatrix}
1 & 1 & 1 & 1\\
1 & 4 & 1 & 2\\
1 & 1 & 4 & 0\\
1 & 2 & 4 & 1\\
\end{bmatrix}$ & %
$M_{1609}$:
$\begin{bmatrix}
1 & 1 & 1 & 1\\
1 & 3 & 3 & 4\\
1 & 4 & 3 & 0\\
1 & 2 & 0 & 1\\
\end{bmatrix}$
\end{tabular}

\noindent
\begin{tabular}{p{.46\textwidth} p{0.46\textwidth}}
$M_{1610}$:
$\begin{bmatrix}
1 & 1 & 1 & 1\\
1 & 3 & 0 & 4\\
1 & 4 & 2 & 0\\
1 & 2 & 2 & 1\\
\end{bmatrix}$ & %
$M_{1618}$:
$\begin{bmatrix}
1 & 1 & 1 & 1\\
1 & 1 & 4 & 3\\
1 & 3 & 4 & 0\\
1 & 2 & 0 & 1\\
\end{bmatrix}$
\end{tabular}

\noindent
\begin{tabular}{p{.46\textwidth} p{0.46\textwidth}}
$M_{1627}$:
$\begin{bmatrix}
1 & 1 & 1 & 1\\
1 & 2 & 1 & 4\\
1 & 1 & 3 & 0\\
1 & 2 & 3 & 1\\
\end{bmatrix}$ & %
$M_{1674}$:
$\begin{bmatrix}
1 & 1 & 1 & 1\\
1 & 0 & 3 & 4\\
1 & 4 & 3 & 0\\
1 & 2 & 4 & 1\\
\end{bmatrix}$
\end{tabular}

\noindent
\begin{tabular}{p{.46\textwidth} p{0.46\textwidth}}
$M_{1680}$:
$\begin{bmatrix}
1 & 1 & 1 & 1\\
1 & 4 & 4 & 3\\
1 & 3 & 1 & 0\\
1 & 2 & 4 & 1\\
\end{bmatrix}$ & %
$M_{1695}$:
$\begin{bmatrix}
1 & 1 & 1 & 1\\
1 & 4 & 0 & 2\\
1 & 2 & 1 & 0\\
1 & 4 & 2 & 1\\
\end{bmatrix}$
\end{tabular}

\noindent
\begin{tabular}{p{.46\textwidth} p{0.46\textwidth}}
$M_{1698}$:
$\begin{bmatrix}
1 & 1 & 1 & 1\\
1 & 3 & 3 & 2\\
1 & 2 & 3 & 0\\
1 & 4 & 2 & 1\\
\end{bmatrix}$ & %
$M_{1713}$:
$\begin{bmatrix}
1 & 0 & 4 & 1\\
1 & 2 & 4 & 3\\
1 & 2 & 1 & 0\\
1 & 1 & 1 & 1\\
\end{bmatrix}$
\end{tabular}

\noindent
\begin{tabular}{p{.46\textwidth} p{0.46\textwidth}}
$M_{1729}$:
$\begin{bmatrix}
1 & 1 & 1 & 1\\
0 & 1 & 2 & 4\\
1 & 1 & 2 & 0\\
2 & 1 & 4 & 2\\
\end{bmatrix}$ & %
\end{tabular}


\noindent
\begin{tabular}{p{.46\textwidth} p{0.46\textwidth}}
$M_{3193}$:
$\begin{bmatrix}
1 & 0 & 1 & 0 & 3 & 1\\
1 & 4 & 0 & 1 & 0 & 1\\
0 & 1 & 1 & 1 & 1 & 1\\
\end{bmatrix}$ & %
$\BBR_9$:
$\begin{bmatrix}
1 & 0 & 1 & 4 & 1 & 4\\
1 & 3 & 1 & 3 & 0 & 0\\
0 & 1 & 1 & 1 & 1 & 1\\
\end{bmatrix}$
\end{tabular}

\noindent
\begin{tabular}{p{.46\textwidth} p{0.46\textwidth}}
$A_9$:
$\begin{bmatrix}
1 & 3 & 3 & 4 & 0 & 4\\
1 & 3 & 2 & 3 & 3 & 0\\
1 & 1 & 1 & 1 & 1 & 1\\
\end{bmatrix}$ & %
$\BBR_9^{\Delta\nabla}$:
$\begin{bmatrix}
3 & 0 & 1 & 0\\
0 & 3 & 1 & 1\\
1 & 1 & 1 & 1\\
1 & 2 & 0 & 3\\
1 & 0 & 4 & 0\\
\end{bmatrix}$
\end{tabular}

\noindent
\begin{tabular}{p{.46\textwidth} p{0.46\textwidth}}
$M_{9574}$:
$\begin{bmatrix}
2 & 0 & 1 & 1 & 1\\
0 & 3 & 1 & 4 & 0\\
1 & 1 & 1 & 0 & 2\\
0 & 1 & 1 & 2 & 0\\
\end{bmatrix}$ & %
$M_{9841}$:
$\begin{bmatrix}
3 & 0 & 1 & 1 & 0\\
1 & 1 & 1 & 1 & 1\\
0 & 2 & 1 & 2 & 0\\
1 & 2 & 1 & 0 & 2\\
\end{bmatrix}$
\end{tabular}

\noindent
\begin{tabular}{p{.46\textwidth} p{0.46\textwidth}}
$M_{9844}$:
$\begin{bmatrix}
2 & 0 & 1 & 2 & 0\\
1 & 1 & 1 & 1 & 1\\
0 & 2 & 1 & 1 & 0\\
1 & 2 & 1 & 2 & 1\\
\end{bmatrix}$ & %
$\PP_9$:
$\begin{bmatrix}
1 & 1 & 1 & 4 & 4\\
1 & 2 & 1 & 0 & 4\\
1 & 0 & 4 & 4 & 4\\
0 & 1 & 1 & 1 & 1\\
\end{bmatrix}$
\end{tabular}

\noindent
\begin{tabular}{p{.46\textwidth} p{0.46\textwidth}}
$M_{10025}$:
$\begin{bmatrix}
4 & 0 & 1 & 4 & 0\\
1 & 1 & 1 & 1 & 1\\
0 & 4 & 1 & 2 & 0\\
1 & 2 & 1 & 2 & 2\\
\end{bmatrix}$ & %
$M_{10052}$:
$\begin{bmatrix}
4 & 0 & 1 & 4 & 0\\
1 & 1 & 1 & 1 & 1\\
0 & 2 & 1 & 1 & 0\\
1 & 2 & 1 & 2 & 1\\
\end{bmatrix}$
\end{tabular}

\noindent
\begin{tabular}{p{.46\textwidth} p{0.46\textwidth}}
$M_{10241}$:
$\begin{bmatrix}
4 & 0 & 1 & 3 & 4\\
1 & 2 & 1 & 2 & 0\\
0 & 4 & 1 & 1 & 0\\
1 & 1 & 1 & 1 & 1\\
\end{bmatrix}$ & %
$M_{10242}$:
$\begin{bmatrix}
3 & 0 & 1 & 3 & 0\\
1 & 1 & 1 & 1 & 1\\
0 & 2 & 1 & 4 & 2\\
1 & 4 & 1 & 4 & 0\\
\end{bmatrix}$
\end{tabular}

\noindent
\begin{tabular}{p{.46\textwidth} p{0.46\textwidth}}
$M_{10244}$:
$\begin{bmatrix}
4 & 0 & 3 & 1 & 1\\
4 & 1 & 4 & 1 & 1\\
0 & 1 & 2 & 2 & 1\\
1 & 1 & 1 & 1 & 1\\
\end{bmatrix}$ & %
$M_{10251}$:
$\begin{bmatrix}
3 & 0 & 1 & 4 & 3\\
1 & 4 & 1 & 2 & 4\\
0 & 2 & 1 & 1 & 2\\
1 & 1 & 1 & 1 & 1\\
\end{bmatrix}$
\end{tabular}

\noindent
\begin{tabular}{p{.46\textwidth} p{0.46\textwidth}}
$M_{10287}$:
$\begin{bmatrix}
2 & 0 & 1 & 3 & 0\\
1 & 4 & 1 & 0 & 0\\
0 & 1 & 1 & 2 & 2\\
1 & 1 & 1 & 1 & 1\\
\end{bmatrix}$ & %
$M_{10368}$:
$\begin{bmatrix}
4 & 0 & 1 & 2 & 1\\
0 & 2 & 1 & 3 & 0\\
1 & 1 & 1 & 0 & 4\\
0 & 1 & 1 & 1 & 1\\
\end{bmatrix}$
\end{tabular}

\noindent
\begin{tabular}{p{.46\textwidth} p{0.46\textwidth}}
$M_{10459}$:
$\begin{bmatrix}
4 & 0 & 1 & 4 & 0\\
1 & 1 & 1 & 1 & 1\\
0 & 2 & 1 & 1 & 2\\
1 & 2 & 1 & 2 & 0\\
\end{bmatrix}$ & %
$M_{11161}$:
$\begin{bmatrix}
2 & 0 & 1 & 1 & 0\\
1 & 1 & 1 & 1 & 1\\
0 & 3 & 1 & 2 & 0\\
1 & 2 & 1 & 0 & 2\\
\end{bmatrix}$
\end{tabular}

\noindent
\begin{tabular}{p{.46\textwidth} p{0.46\textwidth}}
$M_{11162}$:
$\begin{bmatrix}
3 & 0 & 1 & 1 & 0\\
1 & 1 & 1 & 1 & 1\\
0 & 3 & 1 & 4 & 0\\
1 & 4 & 1 & 0 & 4\\
\end{bmatrix}$ & %
$M_{11347}$:
$\begin{bmatrix}
4 & 0 & 1 & 3 & 0\\
1 & 4 & 1 & 0 & 0\\
0 & 1 & 1 & 4 & 4\\
1 & 1 & 1 & 1 & 1\\
\end{bmatrix}$
\end{tabular}

\noindent
\begin{tabular}{p{.46\textwidth} p{0.46\textwidth}}
$M_{12000}$:
$\begin{bmatrix}
3 & 0 & 1 & 2 & 4\\
1 & 1 & 1 & 1 & 1\\
0 & 3 & 1 & 4 & 0\\
1 & 4 & 1 & 0 & 3\\
\end{bmatrix}$ & %
$M_{12575}$:
$\begin{bmatrix}
2 & 0 & 1 & 1 & 0\\
1 & 1 & 1 & 1 & 1\\
0 & 4 & 1 & 3 & 0\\
1 & 3 & 1 & 0 & 3\\
\end{bmatrix}$
\end{tabular}

\noindent
\begin{tabular}{p{.46\textwidth} p{0.46\textwidth}}
$M_{15729}$:
$\begin{bmatrix}
3 & 0 & 1 & 2 & 2\\
1 & 3 & 1 & 4 & 0\\
0 & 1 & 1 & 3 & 2\\
1 & 1 & 1 & 1 & 1\\
\end{bmatrix}$ & %
$\TQ_9$:
$\begin{bmatrix}
1 & 0 & 2 & 1 & 1\\
4 & 0 & 0 & 2 & 1\\
1 & 4 & 0 & 0 & 2\\
1 & 1 & 1 & 1 & 0\\
\end{bmatrix}$
\end{tabular}

\noindent
\begin{tabular}{p{.46\textwidth} p{0.46\textwidth}}
$M_{16321}$:
$\begin{bmatrix}
3 & 0 & 1 & 2 & 3\\
1 & 2 & 1 & 0 & 1\\
0 & 1 & 1 & 3 & 2\\
1 & 1 & 1 & 1 & 1\\
\end{bmatrix}$ & %
$M_{16332}$:
$\begin{bmatrix}
3 & 0 & 1 & 1 & 1\\
1 & 3 & 1 & 2 & 0\\
0 & 1 & 1 & 4 & 1\\
1 & 1 & 1 & 3 & 0\\
\end{bmatrix}$
\end{tabular}

\noindent
\begin{tabular}{p{.46\textwidth} p{0.46\textwidth}}
$M_{18663}$:
$\begin{bmatrix}
2 & 0 & 1 & 1 & 1\\
1 & 4 & 1 & 3 & 0\\
0 & 1 & 1 & 2 & 1\\
1 & 1 & 1 & 4 & 0\\
\end{bmatrix}$ & %
$M_{46942}$:
$\begin{bmatrix}
3 & 0 & 1 & 3 & 1\\
1 & 1 & 1 & 1 & 1\\
0 & 1 & 3 & 3 & 1\\
3 & 1 & 3 & 1 & 0\\
\end{bmatrix}$
\end{tabular}

\noindent
\begin{tabular}{p{.46\textwidth} p{0.46\textwidth}}
$M_{46945}$:
$\begin{bmatrix}
4 & 0 & 1 & 4 & 2\\
1 & 1 & 1 & 1 & 1\\
0 & 2 & 1 & 1 & 0\\
1 & 2 & 1 & 2 & 2\\
\end{bmatrix}$ & %
$M_{47635}$:
$\begin{bmatrix}
3 & 0 & 1 & 0 & 1\\
1 & 1 & 1 & 1 & 1\\
0 & 2 & 1 & 3 & 2\\
1 & 4 & 1 & 3 & 2\\
\end{bmatrix}$
\end{tabular}

\noindent
\begin{tabular}{p{.46\textwidth} p{0.46\textwidth}}
$M_{48819}$:
$\begin{bmatrix}
4 & 0 & 1 & 2 & 2 \\
1 & 4 & 1 & 0 & 1 \\
0 & 1 & 1 & 4 & 2 \\
1 & 1 & 1 & 1 & 1 \\
\end{bmatrix}$ & %
$M_{50041}$:
$\begin{bmatrix}
3 & 0 & 1 & 1 & 4 \\
1 & 1 & 1 & 1 & 1 \\
0 & 3 & 1 & 4 & 3 \\
1 & 4 & 1 & 0 & 1 \\
\end{bmatrix}$
\end{tabular}


\noindent
\begin{tabular}{p{.46\textwidth} p{0.46\textwidth}}
$\Fv_{6}$:
$\begin{bmatrix}
1 & 4 & 0 & 1 & 1 & 2\\
1 & 4 & 0 & 1 & 3 & 1\\
1 & 2 & 4 & 3 & 2 & 4\\
1 & 1 & 1 & 1 & 1 & 1\\
\end{bmatrix}$ & %
$\Fv_{7}$:
$\begin{bmatrix}
1 & 1 & 1 & 1 & 0 & 3\\
2 & 1 & 1 & 0 & 1 & 1\\
4 & 0 & 1 & 0 & 1 & 1\\
0 & 2 & 1 & 1 & 0 & 3\\
\end{bmatrix}$
\end{tabular}

\noindent
\begin{tabular}{p{.46\textwidth} p{0.46\textwidth}}
$\Fv_{8}$:
$\begin{bmatrix}
4 & 1 & 0 & 1 & 2 & 1\\
1 & 1 & 2 & 1 & 2 & 3\\
1 & 1 & 1 & 1 & 1 & 1\\
0 & 0 & 3 & 1 & 1 & 1\\
\end{bmatrix}$ & %
$\Fv_{11}$:
$\begin{bmatrix}
1 & 3 & 1 & 1 & 1 & 3\\
2 & 0 & 1 & 1 & 3 & 1\\
0 & 1 & 1 & 2 & 1 & 2\\
3 & 2 & 1 & 0 & 3 & 2\\
\end{bmatrix}$
\end{tabular}

\noindent
\begin{tabular}{p{.46\textwidth} p{0.46\textwidth}}
$\Fv_{12}$:
$\begin{bmatrix}
1 & 1 & 1 & 3 & 4 & 0\\
1 & 3 & 2 & 2 & 0 & 3\\
1 & 3 & 2 & 0 & 1 & 2\\
1 & 1 & 1 & 1 & 1 & 1\\
\end{bmatrix}$ & %
$\Fv_{16}$:
$\begin{bmatrix}
1 & 0 & 1 & 1 & 1\\
1 & 0 & 3 & 0 & 2\\
1 & 1 & 0 & 3 & 0\\
4 & 1 & 4 & 0 & 1\\
0 & 1 & 0 & 2 & 4\\
\end{bmatrix}$
\end{tabular}

\noindent
\begin{tabular}{p{.46\textwidth} p{0.46\textwidth}}
$\Fv_{18}$:
$\begin{bmatrix}
0 & 3 & 1 & 1 & 1\\
0 & 2 & 3 & 0 & 1\\
2 & 4 & 2 & 3 & 1\\
4 & 0 & 4 & 0 & 1\\
1 & 1 & 1 & 1 & 1\\
\end{bmatrix}$ & %
$\Fv_{19}$:
$\begin{bmatrix}
1 & 1 & 1 & 1 & 1\\
0 & 2 & 1 & 0 & 1\\
3 & 2 & 1 & 2 & 2\\
2 & 0 & 1 & 0 & 4\\
0 & 2 & 1 & 1 & 3\\
\end{bmatrix}$
\end{tabular}

\noindent
\begin{tabular}{p{.46\textwidth} p{0.46\textwidth}}
$\Fv_{21}$:
$\begin{bmatrix}
4 & 0 & 1 & 0 & 1\\
0 & 0 & 1 & 4 & 4\\
1 & 1 & 1 & 1 & 1\\
1 & 4 & 1 & 4 & 4\\
1 & 3 & 1 & 0 & 1\\
\end{bmatrix}$ & %
$\Fv_{24}$:
$\begin{bmatrix}
1 & 1 & 1 & 1 & 1\\
1 & 3 & 2 & 1 & 3\\
1 & 0 & 3 & 3 & 0\\
1 & 1 & 2 & 3 & 0\\
1 & 3 & 2 & 0 & 2\\
\end{bmatrix}$
\end{tabular}

\noindent
\begin{tabular}{p{.46\textwidth} p{0.46\textwidth}}
$\Fv_{25}$:
$\begin{bmatrix}
1 & 1 & 1 & 1 & 1\\
1 & 1 & 2 & 0 & 1\\
2 & 1 & 4 & 1 & 1\\
0 & 2 & 0 & 2 & 1\\
2 & 2 & 1 & 1 & 1\\
\end{bmatrix}$ & %
$\Fv_{27}$:
$\begin{bmatrix}
1 & 1 & 1 & 1 & 1\\
0 & 3 & 1 & 0 & 4\\
0 & 3 & 1 & 3 & 0\\
3 & 0 & 1 & 0 & 1\\
2 & 3 & 1 & 1 & 1\\
\end{bmatrix}$
\end{tabular}

\noindent
\begin{tabular}{p{.46\textwidth} p{0.46\textwidth}}
$\Fv_{37}$:
$\begin{bmatrix}
1 & 1 & 1 & 1 & 1\\
0 & 2 & 1 & 0 & 4\\
3 & 2 & 1 & 2 & 3\\
2 & 0 & 1 & 0 & 2\\
0 & 2 & 1 & 1 & 1\\
\end{bmatrix}$ & %
$\Fv_{41}$:
$\begin{bmatrix}
1 & 1 & 1 & 1 & 1\\
3 & 1 & 4 & 4 & 1\\
3 & 1 & 4 & 2 & 2\\
0 & 1 & 2 & 1 & 1\\
0 & 1 & 0 & 2 & 4\\
\end{bmatrix}$
\end{tabular}

\noindent
\begin{tabular}{p{.46\textwidth} p{0.46\textwidth}}
$\Fv_{42}$:
$\begin{bmatrix}
0 & 0 & 2 & 1 & 1\\
4 & 0 & 0 & 1 & 1\\
1 & 1 & 1 & 1 & 1\\
1 & 1 & 2 & 1 & 3\\
0 & 4 & 1 & 1 & 3\\
\end{bmatrix}$ & %
$\Fv_{47}$:
$\begin{bmatrix}
1 & 1 & 1 & 1 & 1\\
1 & 4 & 1 & 4 & 4\\
1 & 1 & 0 & 3 & 3\\
1 & 0 & 0 & 1 & 3\\
1 & 1 & 1 & 4 & 0\\
\end{bmatrix}$
\end{tabular}

\noindent
\begin{tabular}{p{.46\textwidth} p{0.46\textwidth}}
$\Fv_{51}$:
$\begin{bmatrix}
0 & 1 & 2 & 1 & 1\\
1 & 1 & 1 & 1 & 1\\
4 & 2 & 0 & 1 & 1\\
0 & 4 & 3 & 1 & 2\\
1 & 0 & 1 & 1 & 0\\
\end{bmatrix}$ & %
$\Fv_{54}$:
$\begin{bmatrix}
0 & 2 & 1 & 1 & 0\\
0 & 2 & 1 & 0 & 4\\
1 & 1 & 1 & 3 & 0\\
4 & 0 & 1 & 0 & 4\\
3 & 0 & 1 & 1 & 1\\
\end{bmatrix}$
\end{tabular}

\noindent
\begin{tabular}{p{.46\textwidth} p{0.46\textwidth}}
$\Fv_{56}$:
$\begin{bmatrix}
1 & 1 & 1 & 1 & 1\\
0 & 1 & 1 & 2 & 1\\
1 & 1 & 4 & 1 & 2\\
2 & 1 & 3 & 1 & 3\\
2 & 1 & 3 & 3 & 0\\
\end{bmatrix}$ & %
$\Fv_{60}$:
$\begin{bmatrix}
2 & 2 & 0 & 1 & 1\\
4 & 4 & 3 & 1 & 0\\
3 & 0 & 1 & 1 & 0\\
1 & 1 & 1 & 1 & 1\\
0 & 3 & 0 & 1 & 1\\
\end{bmatrix}$
\end{tabular}

\noindent
\begin{tabular}{p{.46\textwidth} p{0.46\textwidth}}
$\Fv_{61}$:
$\begin{bmatrix}
2 & 2 & 0 & 1 & 0\\
4 & 4 & 3 & 1 & 0\\
3 & 0 & 1 & 1 & 1\\
1 & 1 & 1 & 1 & 1\\
0 & 3 & 0 & 1 & 1\\
\end{bmatrix}$ & %
$\Fv_{66}$:
$\begin{bmatrix}
1 & 1 & 0 & 3 & 3\\
0 & 1 & 1 & 1 & 1\\
1 & 1 & 0 & 1 & 0\\
3 & 1 & 4 & 0 & 0\\
0 & 1 & 1 & 4 & 3\\
\end{bmatrix}$
\end{tabular}

\noindent
\begin{tabular}{p{.46\textwidth} p{0.46\textwidth}}
$\Fv_{68}$:
$\begin{bmatrix}
1 & 0 & 1 & 1 & 1\\
1 & 0 & 4 & 0 & 2\\
1 & 4 & 0 & 4 & 0\\
2 & 1 & 2 & 0 & 1\\
0 & 1 & 0 & 2 & 1\\
\end{bmatrix}$ & %
$\Fv_{71}$:
$\begin{bmatrix}
0 & 2 & 1 & 2 & 2\\
0 & 2 & 1 & 0 & 4\\
1 & 1 & 1 & 1 & 1\\
3 & 0 & 1 & 0 & 4\\
4 & 0 & 1 & 2 & 3\\
\end{bmatrix}$
\end{tabular}

\noindent
\begin{tabular}{p{.46\textwidth} p{0.46\textwidth}}
$\Fv_{75}$:
$\begin{bmatrix}
1 & 1 & 0 & 3 & 0\\
0 & 1 & 1 & 1 & 1\\
1 & 1 & 0 & 1 & 1\\
3 & 1 & 4 & 0 & 0\\
0 & 1 & 1 & 4 & 2\\
\end{bmatrix}$ & %
$\Fv_{77}$:
$\begin{bmatrix}
1 & 1 & 1 & 1 & 0\\
0 & 2 & 1 & 0 & 2\\
3 & 2 & 1 & 2 & 0\\
2 & 0 & 1 & 0 & 2\\
0 & 2 & 1 & 1 & 1\\
\end{bmatrix}$
\end{tabular}

\noindent
\begin{tabular}{p{.46\textwidth} p{0.46\textwidth}}
$\Fv_{85}$:
$\begin{bmatrix}
4 & 3 & 1 & 3 & 2\\
0 & 1 & 1 & 0 & 2\\
1 & 1 & 1 & 1 & 1\\
1 & 0 & 1 & 0 & 4\\
1 & 1 & 1 & 3 & 2\\
\end{bmatrix}$ & %
$\Fv_{87}$:
$\begin{bmatrix}
1 & 3 & 0 & 1 & 1\\
1 & 1 & 1 & 1 & 1\\
1 & 1 & 3 & 2 & 4\\
1 & 1 & 2 & 2 & 2\\
1 & 4 & 0 & 2 & 3\\
\end{bmatrix}$
\end{tabular}

\noindent
\begin{tabular}{p{.46\textwidth} p{0.46\textwidth}}
$\Fv_{90}$:
$\begin{bmatrix}
0 & 2 & 1 & 2 & 0\\
0 & 1 & 1 & 0 & 4\\
1 & 1 & 1 & 1 & 1\\
1 & 0 & 1 & 0 & 2\\
1 & 1 & 1 & 2 & 3\\
\end{bmatrix}$ & %
$\Fv_{91}$:
$\begin{bmatrix}
0 & 4 & 1 & 1 & 3\\
2 & 0 & 1 & 2 & 0\\
1 & 1 & 1 & 1 & 1\\
1 & 1 & 1 & 3 & 2\\
4 & 2 & 1 & 2 & 0\\
\end{bmatrix}$
\end{tabular}

\noindent
\begin{tabular}{p{.46\textwidth} p{0.46\textwidth}}
$\Fv_{103}$:
$\begin{bmatrix}
1 & 1 & 1 & 1 & 1\\
1 & 0 & 4 & 1 & 1\\
1 & 0 & 4 & 3 & 3\\
1 & 2 & 1 & 3 & 1\\
1 & 1 & 1 & 0 & 4\\
\end{bmatrix}$ &
$\GP_{10}$:
$\begin{bmatrix}
4 & 3 & 0 & 1 & 1\\
3 & 3 & 0 & 0 & 1\\
0 & 0 & 1 & 2 & 1\\
1 & 0 & 2 & 0 & 1\\
1 & 1 & 1 & 1 & 0\\
\end{bmatrix}$
\end{tabular}

\noindent
\begin{tabular}{p{.46\textwidth} p{0.46\textwidth}}
$\TQ_{10}$:
$\begin{bmatrix}
3 & 0 & 0 & 1 & 4\\
0 & 0 & 1 & 2 & 2\\
0 & 1 & 1 & 1 & 1\\
1 & 1 & 1 & 0 & 4\\
4 & 1 & 2 & 4 & 4\\
\end{bmatrix}$ & %
$\FF_{10}$:
$\begin{bmatrix}
1 & 0 & 0 & 1 & 1\\
1 & 1 & 0 & 0 & 1\\
1 & 3 & 1 & 0 & 0\\
1 & 1 & 1 & 2 & 0\\
3 & 3 & 3 & 2 & 2\\
\end{bmatrix}$
\end{tabular}

\noindent
\begin{tabular}{p{.46\textwidth} p{0.46\textwidth}}
$\Fv_{14}$:
$\begin{bmatrix}
1 & 1 & 1 & 1 & 1\\
2 & 1 & 1 & 0 & 1\\
2 & 0 & 0 & 1 & 1\\
0 & 0 & 1 & 1 & 1\\
0 & 4 & 2 & 1 & 0\\
\end{bmatrix}$ & %
$\Fv_{22}$:
$\begin{bmatrix}
4 & 0 & 3 & 0 & 1\\
1 & 1 & 1 & 1 & 1\\
4 & 0 & 0 & 2 & 1\\
0 & 1 & 2 & 0 & 1\\
2 & 4 & 0 & 2 & 1\\
\end{bmatrix}$
\end{tabular}

\noindent
\begin{tabular}{p{.46\textwidth} p{0.46\textwidth}}
$\Fv_{26}$:
$\begin{bmatrix}
0 & 1 & 0 & 2 & 1\\
0 & 0 & 2 & 3 & 1\\
2 & 0 & 0 & 2 & 1\\
1 & 1 & 1 & 1 & 1\\
4 & 1 & 1 & 0 & 1\\
\end{bmatrix}$ & %
$\Fv_{28}$:
$\begin{bmatrix}
0 & 0 & 1 & 2 & 3\\
1 & 1 & 1 & 1 & 1\\
1 & 3 & 1 & 2 & 1\\
1 & 1 & 1 & 3 & 4\\
3 & 1 & 1 & 2 & 1\\
\end{bmatrix}$
\end{tabular}

\noindent
\begin{tabular}{p{.46\textwidth} p{0.46\textwidth}}
$\Fv_{30}$:
$\begin{bmatrix}
1 & 1 & 1 & 1 & 1\\
0 & 2 & 0 & 1 & 1\\
1 & 1 & 3 & 1 & 2\\
2 & 1 & 0 & 1 & 3\\
0 & 0 & 1 & 1 & 1\\
\end{bmatrix}$ & %
$\Fv_{32}$:
$\begin{bmatrix}
2 & 0 & 0 & 1 & 2\\
1 & 1 & 1 & 1 & 1\\
3 & 3 & 1 & 1 & 1\\
0 & 2 & 1 & 1 & 0\\
4 & 4 & 0 & 1 & 1\\
\end{bmatrix}$
\end{tabular}

\noindent
\begin{tabular}{p{.46\textwidth} p{0.46\textwidth}}
$\Fv_{35}$:
$\begin{bmatrix}
1 & 2 & 2 & 1 & 2\\
1 & 1 & 1 & 1 & 1\\
1 & 0 & 2 & 3 & 0\\
1 & 1 & 3 & 1 & 2\\
1 & 4 & 1 & 0 & 0\\
\end{bmatrix}$ & %
$\Fv_{36}$:
$\begin{bmatrix}
4 & 4 & 1 & 0 & 1\\
1 & 1 & 1 & 1 & 0\\
3 & 3 & 0 & 1 & 1\\
0 & 4 & 2 & 0 & 1\\
1 & 0 & 0 & 1 & 1\\
\end{bmatrix}$
\end{tabular}

\noindent
\begin{tabular}{p{.46\textwidth} p{0.46\textwidth}}
$\Fv_{39}$:
$\begin{bmatrix}
0 & 3 & 1 & 3 & 2\\
0 & 2 & 1 & 0 & 3\\
1 & 1 & 1 & 1 & 1\\
1 & 0 & 1 & 0 & 4\\
1 & 4 & 1 & 3 & 2\\
\end{bmatrix}$ & %
$\Fv_{40}$:
$\begin{bmatrix}
0 & 0 & 1 & 2 & 1\\
4 & 0 & 0 & 2 & 1\\
1 & 1 & 1 & 1 & 1\\
2 & 2 & 1 & 2 & 1\\
0 & 2 & 4 & 4 & 1\\
\end{bmatrix}$
\end{tabular}

\noindent
\begin{tabular}{p{.46\textwidth} p{0.46\textwidth}}
$\Fv_{43}$:
$\begin{bmatrix}
0 & 0 & 1 & 2 & 1\\
2 & 1 & 3 & 3 & 1\\
1 & 1 & 1 & 1 & 1\\
3 & 4 & 2 & 1 & 1\\
2 & 3 & 1 & 1 & 1\\
\end{bmatrix}$ & %
$\Fv_{44}$:
$\begin{bmatrix}
0 & 3 & 1 & 3 & 2\\
0 & 2 & 1 & 0 & 3\\
1 & 1 & 1 & 1 & 1\\
1 & 0 & 1 & 0 & 2\\
1 & 4 & 1 & 3 & 2\\
\end{bmatrix}$
\end{tabular}

\noindent
\begin{tabular}{p{.46\textwidth} p{0.46\textwidth}}
$\Fv_{46}$:
$\begin{bmatrix}
1 & 1 & 1 & 1 & 1\\
1 & 4 & 1 & 3 & 4\\
4 & 4 & 1 & 4 & 0\\
0 & 3 & 1 & 3 & 1\\
2 & 4 & 1 & 1 & 1\\
\end{bmatrix}$ & %
$\Fv_{48}$:
$\begin{bmatrix}
3 & 2 & 1 & 2 & 0\\
0 & 1 & 1 & 0 & 1\\
1 & 1 & 1 & 1 & 1\\
2 & 0 & 1 & 0 & 2\\
0 & 1 & 1 & 2 & 0\\
\end{bmatrix}$
\end{tabular}

\noindent
\begin{tabular}{p{.46\textwidth} p{0.46\textwidth}}
$\Fv_{49}$:
$\begin{bmatrix}
3 & 3 & 4 & 3 & 1\\
0 & 2 & 0 & 3 & 1\\
1 & 1 & 1 & 1 & 1\\
3 & 2 & 0 & 2 & 1\\
0 & 0 & 4 & 3 & 1\\
\end{bmatrix}$ & %
$\Fv_{50}$:
$\begin{bmatrix}
1 & 4 & 3 & 3 & 1\\
0 & 0 & 1 & 4 & 1\\
1 & 1 & 1 & 1 & 1\\
3 & 2 & 4 & 1 & 1\\
4 & 4 & 1 & 1 & 1\\
\end{bmatrix}$
\end{tabular}

\noindent
\begin{tabular}{p{.46\textwidth} p{0.46\textwidth}}
$\Fv_{53}$:
$\begin{bmatrix}
1 & 1 & 3 & 1 & 2\\
3 & 2 & 0 & 1 & 1\\
1 & 1 & 1 & 1 & 1\\
0 & 1 & 0 & 1 & 2\\
3 & 4 & 3 & 1 & 1\\
\end{bmatrix}$ & %
$\Fv_{55}$:
$\begin{bmatrix}
2 & 0 & 3 & 1 & 0\\
1 & 1 & 1 & 1 & 1\\
3 & 3 & 3 & 1 & 0\\
0 & 2 & 0 & 1 & 1\\
4 & 4 & 3 & 1 & 1\\
\end{bmatrix}$
\end{tabular}

\noindent
\begin{tabular}{p{.46\textwidth} p{0.46\textwidth}}
$\Fv_{57}$:
$\begin{bmatrix}
0 & 1 & 1 & 4 & 3\\
2 & 2 & 1 & 0 & 0\\
1 & 1 & 1 & 1 & 1\\
2 & 2 & 1 & 4 & 3\\
0 & 4 & 1 & 1 & 1\\
\end{bmatrix}$ & %
$\Fv_{58}$:
$\begin{bmatrix}
1 & 1 & 1 & 1 & 1\\
4 & 0 & 1 & 4 & 4\\
0 & 2 & 1 & 2 & 0\\
3 & 3 & 1 & 1 & 1\\
0 & 0 & 1 & 4 & 3\\
\end{bmatrix}$
\end{tabular}

\noindent
\begin{tabular}{p{.46\textwidth} p{0.46\textwidth}}
$\Fv_{59}$:
$\begin{bmatrix}
3 & 0 & 1 & 0 & 1\\
0 & 0 & 1 & 1 & 1\\
1 & 2 & 1 & 2 & 0\\
1 & 1 & 1 & 1 & 1\\
1 & 3 & 1 & 0 & 2\\
\end{bmatrix}$ & %
$\Fv_{62}$:
$\begin{bmatrix}
1 & 1 & 1 & 1 & 1\\
0 & 2 & 0 & 4 & 1\\
4 & 4 & 1 & 4 & 1\\
2 & 4 & 0 & 4 & 1\\
0 & 0 & 1 & 1 & 1\\
\end{bmatrix}$
\end{tabular}

\noindent
\begin{tabular}{p{.46\textwidth} p{0.46\textwidth}}
$\Fv_{63}$:
$\begin{bmatrix}
0 & 4 & 1 & 2 & 2\\
2 & 0 & 1 & 4 & 0\\
1 & 1 & 1 & 2 & 0\\
1 & 1 & 1 & 1 & 1\\
4 & 2 & 1 & 4 & 1\\
\end{bmatrix}$ & %
$\Fv_{65}$:
$\begin{bmatrix}
0 & 0 & 1 & 3 & 1\\
1 & 1 & 1 & 2 & 3\\
1 & 2 & 1 & 3 & 3\\
1 & 1 & 1 & 1 & 1\\
2 & 1 & 1 & 3 & 3\\
\end{bmatrix}$
\end{tabular}

\noindent
\begin{tabular}{p{.46\textwidth} p{0.46\textwidth}}
$\Fv_{70}$:
$\begin{bmatrix}
1 & 3 & 1 & 3 & 1\\
0 & 1 & 1 & 0 & 4\\
1 & 1 & 1 & 1 & 1\\
4 & 0 & 1 & 0 & 1\\
0 & 1 & 1 & 3 & 0\\
\end{bmatrix}$ & %
$\Fv_{72}$:
$\begin{bmatrix}
2 & 3 & 1 & 3 & 1\\
0 & 1 & 1 & 0 & 3\\
1 & 1 & 1 & 1 & 1\\
4 & 0 & 1 & 0 & 3\\
0 & 1 & 1 & 3 & 1\\
\end{bmatrix}$
\end{tabular}

\noindent
\begin{tabular}{p{.46\textwidth} p{0.46\textwidth}}
$\Fv_{73}$:
$\begin{bmatrix}
4 & 0 & 1 & 0 & 2\\
1 & 1 & 1 & 1 & 1\\
4 & 0 & 1 & 3 & 0\\
0 & 2 & 1 & 0 & 1\\
3 & 1 & 1 & 3 & 1\\
\end{bmatrix}$ & %
$\Fv_{78}$:
$\begin{bmatrix}
4 & 1 & 0 & 1 & 1\\
0 & 0 & 1 & 3 & 1\\
1 & 1 & 1 & 1 & 1\\
3 & 3 & 1 & 3 & 1\\
1 & 2 & 0 & 1 & 1\\
\end{bmatrix}$
\end{tabular}

\noindent
\begin{tabular}{p{.46\textwidth} p{0.46\textwidth}}
$\Fv_{79}$:
$\begin{bmatrix}
4 & 0 & 1 & 0 & 3\\
0 & 0 & 1 & 4 & 3\\
1 & 1 & 1 & 1 & 1\\
1 & 4 & 1 & 4 & 3\\
1 & 3 & 1 & 0 & 2\\
\end{bmatrix}$ & %
$\Fv_{81}$:
$\begin{bmatrix}
0 & 1 & 0 & 1 & 1\\
0 & 1 & 1 & 0 & 1\\
1 & 1 & 1 & 1 & 0\\
3 & 0 & 4 & 0 & 1\\
4 & 0 & 0 & 1 & 1\\
\end{bmatrix}$
\end{tabular}

\noindent
\begin{tabular}{p{.46\textwidth} p{0.46\textwidth}}
$\Fv_{82}$:
$\begin{bmatrix}
4 & 0 & 1 & 2 & 3\\
2 & 1 & 1 & 2 & 0\\
1 & 1 & 1 & 1 & 0\\
0 & 4 & 1 & 1 & 1\\
0 & 4 & 1 & 4 & 3\\
\end{bmatrix}$ & %
$\Fv_{83}$:
$\begin{bmatrix}
4 & 0 & 1 & 0 & 4\\
1 & 1 & 1 & 1 & 1\\
4 & 0 & 1 & 2 & 0\\
0 & 4 & 1 & 0 & 1\\
2 & 1 & 1 & 2 & 1\\
\end{bmatrix}$
\end{tabular}

\noindent
\begin{tabular}{p{.46\textwidth} p{0.46\textwidth}}
$\Fv_{84}$:
$\begin{bmatrix}
1 & 1 & 1 & 3 & 3\\
0 & 4 & 1 & 0 & 3\\
1 & 1 & 1 & 1 & 1\\
2 & 0 & 1 & 0 & 3\\
0 & 0 & 1 & 1 & 1\\
\end{bmatrix}$ & %
$\Fv_{89}$:
$\begin{bmatrix}
1 & 1 & 1 & 1 & 1\\
0 & 4 & 1 & 0 & 1\\
2 & 4 & 1 & 4 & 0\\
2 & 0 & 1 & 0 & 4\\
2 & 4 & 1 & 1 & 1\\
\end{bmatrix}$
\end{tabular}

\noindent
\begin{tabular}{p{.46\textwidth} p{0.46\textwidth}}
$\Fv_{92}$:
$\begin{bmatrix}
1 & 2 & 1 & 2 & 1\\
0 & 1 & 1 & 0 & 2\\
1 & 1 & 1 & 1 & 1\\
2 & 0 & 1 & 0 & 1\\
0 & 1 & 1 & 2 & 0\\
\end{bmatrix}$ & %
$\Fv_{93}$:
$\begin{bmatrix}
1 & 1 & 1 & 2 & 2\\
0 & 2 & 1 & 0 & 2\\
1 & 1 & 1 & 1 & 1\\
4 & 0 & 1 & 0 & 2\\
0 & 0 & 1 & 1 & 1\\
\end{bmatrix}$
\end{tabular}

\noindent
\begin{tabular}{p{.46\textwidth} p{0.46\textwidth}}
$\Fv_{94}$:
$\begin{bmatrix}
2 & 3 & 0 & 1 & 1\\
0 & 0 & 3 & 1 & 1\\
0 & 2 & 1 & 0 & 1\\
4 & 1 & 0 & 1 & 1\\
1 & 0 & 2 & 0 & 1\\
\end{bmatrix}$ & %
$\Fv_{95}$:
$\begin{bmatrix}
1 & 1 & 1 & 1 & 1\\
1 & 1 & 1 & 0 & 0\\
4 & 2 & 1 & 2 & 0\\
4 & 0 & 1 & 0 & 4\\
4 & 4 & 1 & 1 & 1\\
\end{bmatrix}$
\end{tabular}

\noindent
\begin{tabular}{p{.46\textwidth} p{0.46\textwidth}}
$\Fv_{96}$:
$\begin{bmatrix}
1 & 1 & 1 & 1 & 1\\
1 & 1 & 1 & 0 & 0\\
1 & 3 & 2 & 3 & 0\\
1 & 0 & 2 & 0 & 3\\
1 & 1 & 2 & 2 & 2\\
\end{bmatrix}$ & %
$\Fv_{101}$:
$\begin{bmatrix}
2 & 3 & 1 & 1 & 1\\
1 & 0 & 2 & 1 & 2\\
0 & 4 & 0 & 1 & 1\\
1 & 1 & 1 & 1 & 1\\
1 & 0 & 0 & 1 & 3\\
\end{bmatrix}$
\end{tabular}

\noindent
\begin{tabular}{p{.46\textwidth} p{0.46\textwidth}}
$\Fv_{102}$:
$\begin{bmatrix}
3 & 1 & 0 & 1 & 3\\
1 & 1 & 1 & 1 & 1\\
0 & 0 & 1 & 1 & 4\\
3 & 3 & 2 & 1 & 3\\
1 & 2 & 1 & 1 & 3\\
\end{bmatrix}$ & %
$\Fv_{104}$:
$\begin{bmatrix}
1 & 1 & 4 & 0 & 1\\
0 & 0 & 1 & 1 & 1\\
3 & 2 & 0 & 1 & 0\\
1 & 1 & 1 & 0 & 4\\
4 & 1 & 0 & 2 & 4\\
\end{bmatrix}$
\end{tabular}

\noindent
\begin{tabular}{p{.46\textwidth} p{0.46\textwidth}}
$\Fv_{106}$:
$\begin{bmatrix}
2 & 3 & 1 & 3 & 0\\
0 & 1 & 1 & 0 & 1\\
1 & 1 & 1 & 1 & 1\\
4 & 0 & 1 & 0 & 4\\
0 & 1 & 1 & 3 & 0\\
\end{bmatrix}$ & %
$\Fv_{111}$:
$\begin{bmatrix}
1 & 1 & 1 & 1 & 1\\
0 & 2 & 1 & 0 & 4\\
0 & 2 & 1 & 2 & 3\\
3 & 0 & 1 & 0 & 1\\
4 & 2 & 1 & 1 & 1\\
\end{bmatrix}$
\end{tabular}

\noindent
\begin{tabular}{p{.46\textwidth} p{0.46\textwidth}}
$\Fv_{112}$:
$\begin{bmatrix}
1 & 1 & 1 & 1 & 1\\
2 & 1 & 1 & 2 & 2\\
1 & 0 & 1 & 3 & 1\\
4 & 2 & 1 & 4 & 0\\
2 & 4 & 1 & 0 & 1\\
\end{bmatrix}$ & %
\end{tabular}
\end{itemize}

\end{document}